\newtheorem{theorem}{Theorem}[section]
\newtheorem{lemma}[theorem]{Lemma}
\newtheorem{corollary}[theorem]{Corollary}
\newtheorem{prop-def}{Proposition-Definition}[section]
\newtheorem{coro-def}{Corollary-Definition}[section]
\newtheorem{corollary-def}{Corollary-Definition}[section]
\newtheorem{conjecture}[theorem]{Conjecture}
\newtheorem{problem}[theorem]{Problem}
\theoremstyle{definition}
\newtheorem{defn}[theorem]{Definition}
\newtheorem{remark}[theorem]{Remark}
\newtheorem{tempex}[theorem]{Example}
\newtheorem{tempexs}[theorem]{Examples}
\newenvironment{exam}{\begin{tempex}\rm}{\end{tempex}}
\newcommand{\nc}{\newcommand}
\nc{\tred}[1]{\textcolor{red}{#1}}
\nc{\tblue}[1]{\textcolor{blue}{#1}}
\nc{\tgreen}[1]{\textcolor{green}{#1}}
\nc{\tpurple}[1]{\textcolor{purple}{#1}}
\nc{\btred}[1]{\textcolor{red}{\bf #1}}
\nc{\btblue}[1]{\textcolor{blue}{\bf #1}}
\nc{\btgreen}[1]{\textcolor{green}{\bf #1}}
\nc{\btpurple}[1]{\textcolor{purple}{\bf #1}}
\nc{\NN}{{\mathbb N}}
\nc{\ncsha}{{\mbox{\cyr X}^{\mathrm NC}}} \nc{\ncshao}{{\mbox{\cyr
X}^{\mathrm NC}_0}}
\renewcommand{\frak}{\mathfrak}
\newcommand{\efootnote}[1]{}
\renewcommand{\textbf}[1]{}
\newcommand{\delete}[1]{}
\nc{\mlabel}[1]{\label{#1}}  % Use this to suppress names
\nc{\mcite}[1]{\cite{#1}}  % Use this to suppress names
\nc{\mref}[1]{\ref{#1}}  % Use this to suppress names
\nc{\meqref}[1]{\eqref{#1}}  % Use this to suppress names
\nc{\mbibitem}[1]{\bibitem{#1}} % Use this to show number name
\nc{\mlabel}[1]{\label{#1}  % Use the next two lines to show names
{\hfill \hspace{1cm}{\small\tt{{\ }\hfill(#1)}}}}
\nc{\mcite}[1]{\cite{#1}{\small{\tt{{\ }(#1)}}}}  % Use this lines to show names
\nc{\mref}[1]{\ref{#1}{{\tt{{\ }(#1)}}}}  % Use this lines to show names
\nc{\meqref}[1]{\eqref{#1}{{\tt{{\ }(#1)}}}}  % Use this lines to show names
\nc{\mbibitem}[1]{\bibitem[\bf #1]{#1}} % Use this to show name
\nc\ns{ns\xspace}
\nc{\opo}{\mathscr{O}}
\nc{\opp}{\mathscr{P}}
\nc{\opf}{\mathscr{F}}
\nc{\optopd}{\mathscr{O}}
\nc{\name}[1]{{\bf #1}}
\nc{\tforall}{\quad \text{for all }}
\nc{\mo}{\calm}
\nc{\barot}{\overline{\otimes}}
\nc{\dm}{\diamond_\mo}
\nc{\oot}{\overline{\ot}}
\nc{\opa}{\ast} \nc{\opb}{\odot} \nc{\op}{\bullet} \nc{\pa}{\frakL}
\nc{\arr}{\rightarrow} \nc{\lu}[1]{(#1)} \nc{\mult}{\mrm{mult}}
\nc{\diff}{\mathfrak{Diff}}
\nc{\opc}{\sharp}\nc{\opd}{\natural}
\nc{\ope}{\circ}
\nc{\dpt}{\mathrm{d}}
\nc{\GS}{Gr\"obner-Shirshov\xspace}
\nc{\gsb}{Gr\"obner-Shirshov basis\xspace}
\nc{\gsbs}{Gr\"{o}bner-Shirshov bases\xspace}
\nc{\diam}{alternating\xspace}
\nc{\Diam}{Alternating\xspace}
\nc{\cdiam}{canonical alternating\xspace}
\nc{\Cdiam}{Canonical alternating\xspace}
\nc{\AW}{\mathcal{A}}
\nc{\mrbo}{modified RBO\xspace }
\nc{\ari}{\mathrm{ar}}
\nc{\lef}{\mathrm{lef}}
\nc{\Sh}{\mathrm{ST}}
\nc{\Cr}{\mathrm{Cr}}
\nc{\st}{{Schr\"oder tree}\xspace}
\nc{\sts}{{Schr\"oder trees}\xspace}
\nc{\vertset}{\Omega} % set of vertex decorations
\nc{\pb}{{\mathrm{pb}}}
\nc{\Lf}{{\mathrm{Lf}}}
\nc{\lft}{{left tree}\xspace}
\nc{\lfts}{{left trees}\xspace}
\nc{\fat}{{fundamental averaging tree}\xspace}
\nc{\fats}{{fundamental averaging trees}\xspace}
\nc{\avt}{\mathrm{Avt}}
\nc{\rass}{{\mathit{RAss}}}
\nc{\aass}{{\mathit{AAss}}}
\nc{\vin}{{\mathrm Vin}}    %decoration set of indices
\nc{\lin}{{\mathrm Lin}}    %decoration set of leaves
\nc{\inv}{\mathrm{I}n}
\nc{\gensp}{V} % space of generators
\nc{\genbas}{\mathcal{V}} % basis of the space of generators
\nc{\bvp}{V_P}     % Rota-Baxter generating space
\nc{\gop}{{\,\omega\,}}     % generic binary operation
\nc{\bin}[2]{ (_{\stackrel{\scs{#1}}{\scs{#2}}})}  %binomial coeff
\nc{\binc}[2]{ \left (\!\! \begin{array}{c} \scs{#1}\\
    \scs{#2} \end{array}\!\! \right )}  %binomial coeff
\nc{\bincc}[2]{  \left ( {\scs{#1} \genfrac
    \vspace{-1cm}\scs{#2}} \right )}  %binomial coeff
\nc{\bs}{\bar{S}} \nc{\cosum}{\sqsubset} \nc{\la}{\longrightarrow}
\nc{\rar}{\rightarrow} \nc{\dar}{\downarrow} \nc{\dprod}{**}
\nc{\dap}[1]{\downarrow \rlap{$\scriptstyle{#1}$}}
\nc{\md}{\mathrm{dth}} \nc{\uap}[1]{\uparrow
\rlap{$\scriptstyle{#1}$}} \nc{\defeq}{\stackrel{\rm def}{=}}
\nc{\disp}[1]{\displaystyle{#1}} \nc{\dotcup}{\
\displaystyle{\bigcup^\bullet}\ } \nc{\gzeta}{\bar{\zeta}}
\nc{\hcm}{\ \hat{,}\ } \nc{\hts}{\hat{\otimes}}
\nc{\free}[1]{\bar{#1}}
\nc{\uni}[1]{\tilde{#1}} \nc{\hcirc}{\hat{\circ}} \nc{\lleft}{[}
\nc{\lright}{]} \nc{\lc}{\lfloor} \nc{\rc}{\rfloor}
\nc{\curlyl}{\left \{ \begin{array}{c} {} \\ {} \end{array}
    \right .  \!\!\!\!\!\!\!}
\nc{\curlyr}{ \!\!\!\!\!\!\!
    \left . \begin{array}{c} {} \\ {} \end{array}
    \right \} }
\nc{\longmid}{\left | \begin{array}{c} {} \\ {} \end{array}
    \right . \!\!\!\!\!\!\!}
\nc{\onetree}{\bullet} \nc{\ora}[1]{\stackrel{#1}{\rar}}
\nc{\ola}[1]{\stackrel{#1}{\la}}%${\Bbb Z}$
\nc{\ot}{\otimes} \nc{\mot}{{{\boxtimes\,}}}
\nc{\otm}{\overline{\boxtimes}} \nc{\sprod}{\bullet}
\nc{\scs}[1]{\scriptstyle{#1}} \nc{\mrm}[1]{{\rm #1}}
\nc{\margin}[1]{\marginpar{\rm #1}}   %{\rm #1}}
\nc{\dirlim}{\displaystyle{\lim_{\longrightarrow}}\,}
\nc{\invlim}{\displaystyle{\lim_{\longleftarrow}}\,}
\nc{\mvp}{\vspace{0.3cm}} \nc{\tk}{^{(k)}} \nc{\tp}{^\prime}
\nc{\ttp}{^{\prime\prime}} \nc{\svp}{\vspace{2cm}}
\nc{\vp}{\vspace{8cm}} \nc{\proofbegin}{\noindent{\bf Proof: }}
\nc{\proofend}{$\blacksquare$ \vspace{0.3cm}}
\nc{\modg}[1]{\!<\!\!{#1}\!\!>}
\nc{\intg}[1]{F_C(#1)} \nc{\lmodg}{\!
<\!\!} \nc{\rmodg}{\!\!>\!}
\nc{\cpi}{\widehat{\Pi}}
\nc{\sha}{{\mbox{\cyr X}}}  %used to be \cyr
\nc{\ssha}{{\mbox{\cyrs X}}} %sha as product
\nc{\shpr}{\diamond}    %Shuffle product
\nc{\shp}{\ast} \nc{\shplus}{\shpr^+}
\nc{\shprc}{\shpr_c}    %Cartier's product
\nc{\msh}{\ast} \nc{\zprod}{m_0} \nc{\oprod}{m_1}
\nc{\vep}{\varepsilon} \nc{\labs}{\mid\!} \nc{\rabs}{\!\mid}
\nc{\sqmon}[1]{\langle #1\rangle}
\nc{\mmbox}[1]{\mbox{\ #1\ }} \nc{\dep}{\mrm{dep}} \nc{\fp}{\mrm{FP}}
\nc{\rchar}{\mrm{char}} \nc{\End}{\mrm{End}} \nc{\Fil}{\mrm{Fil}}
\nc{\Mor}{Mor\xspace} \nc{\gmzvs}{gMZV\xspace}
\nc{\gmzv}{gMZV\xspace} \nc{\mzv}{MZV\xspace}
\nc{\mzvs}{MZVs\xspace} \nc{\Hom}{\mrm{Hom}} \nc{\id}{\mrm{id}}\nc\Idl{{\Id_{\rm Lie}}}
\nc{\im}{\mrm{im}} \nc{\incl}{\mrm{incl}} \nc{\map}{\mrm{Map}}
\nc{\mchar}{\rm char} \nc{\nz}{\rm NZ} \nc{\supp}{\rm Supp}
\nc{\Alg}{\mathbf{Alg}} \nc{\Bax}{\mathbf{Bax}} \nc{\bff}{\mathbf f}
\nc{\bfk}{{\bf k}} \nc{\bfone}{{\bf 1}} \nc{\bfx}{\mathbf x}
\nc{\bfy}{\mathbf y}
\nc{\base}[1]{\bfone^{\otimes ({#1}+1)}} %{{a_{#1}}}
\nc{\Cat}{\mathbf{Cat}}
\nc{\detail}{\marginpar{\bf More detail}
    \noindent{\bf Need more detail!}
    \svp}
\nc{\Int}{\mathbf{Int}} \nc{\Mon}{\mathbf{Mon}}
\nc{\rbtm}{{shuffle }} \nc{\rbto}{{Rota-Baxter }}
\nc{\remarks}{\noindent{\bf Remarks: }} \nc{\Rings}{\mathbf{Rings}}
\nc{\Sets}{\mathbf{Sets}} \nc{\wtot}{\widetilde{\odot}}
\nc{\wast}{\widetilde{\ast}} \nc{\bodot}{\bar{\odot}}
\nc{\bast}{\bar{\ast}} \nc{\hodot}[1]{\odot^{#1}}
\nc{\hast}[1]{\ast^{#1}} \nc{\mal}{\mathcal{O}}
\nc{\tet}{\tilde{\ast}} \nc{\teot}{\tilde{\odot}}
\nc{\oex}{\overline{x}} \nc{\oey}{\overline{y}}
\nc{\oez}{\overline{z}} \nc{\oef}{\overline{f}}
\nc{\oea}{\overline{a}} \nc{\oeb}{\overline{b}}
\nc{\weast}[1]{\widetilde{\ast}^{#1}}
\nc{\weodot}[1]{\widetilde{\odot}^{#1}} \nc{\hstar}[1]{\star^{#1}}
\nc{\lae}{\langle} \nc{\rae}{\rangle}
\nc{\lf}{\lfloor}
\nc{\rf}{\rfloor}
\nc{\QQ}{{\mathbb Q}}
\nc{\RR}{{\mathbb R}} \nc{\ZZ}{{\mathbb Z}}
\nc{\cala}{{\mathcal A}} \nc{\calb}{{\mathcal B}}
\nc{\calc}{{\mathcal C}}
\nc{\cald}{{\mathcal D}} \nc{\cale}{{\mathcal E}}
\nc{\calf}{{\mathcal F}} \nc{\calg}{{\mathcal G}}
\nc{\calh}{{\mathcal H}} \nc{\cali}{{\mathcal I}}
\nc{\call}{{\mathcal L}} \nc{\calm}{{\mathcal M}}
\nc{\caln}{{\mathcal N}} \nc{\calo}{{\mathcal O}}
\nc{\calp}{{\mathcal P}} \nc{\calr}{{\mathcal R}}
\nc{\cals}{{\mathcal S}} \nc{\calt}{{\mathcal T}}
\nc{\calu}{{\mathcal U}} \nc{\calw}{{\mathcal W}} \nc{\calk}{{\mathcal K}}
\nc{\calx}{{\mathcal X}} \nc{\CA}{\mathcal{A}}
\nc{\fraka}{{\mathfrak a}} \nc{\frakA}{{\mathfrak A}}
\nc{\frakb}{{\mathfrak b}} \nc{\frakB}{{\mathfrak B}}
\nc{\frakc}{{\mathfrak c}}
\nc{\frakD}{{\mathfrak D}} \nc{\frakF}{\mathfrak{F}}
\nc{\frakf}{{\mathfrak f}} \nc{\frakg}{{\mathfrak g}}
\nc{\frakH}{{\mathfrak H}} \nc{\frakI}{{\mathfrak I}}
\nc{\frakL}{{\mathfrak L}}
\nc{\frakM}{{\mathfrak M}} \nc{\bfrakM}{\overline{\frakM}}
\nc{\frakm}{{\mathfrak m}} \nc{\frakP}{{\mathfrak P}}
\nc{\frakN}{{\mathfrak N}} \nc{\frakp}{{\mathfrak p}}
\nc{\frakS}{{\mathfrak S}} \nc{\frakT}{\mathfrak{T}}
\nc{\BS}{\mathbb{S
}}
\font\cyr=wncyr10 \font\cyrs=wncyr7
\nc{\li}[1]{\textcolor{red}{#1}}
\nc{\lir}[1]{\textcolor{red}{Li: #1}}
\nc{\xing}[1]{\textcolor{blue}{Xing: #1}}
\nc{\hu}[1]{\textcolor{purple}{Huhu: #1}}
\nc{\Hu}[1]{\textcolor{purple}{#1}}
\nc{\UN}{U_{N}}
\nc{\FN}{F_{\mathrm M}}
\nc{\altx}{\Lambda}
\nc{\spr}{\cdot}
\nc{\rts}{\stackrel{\rightarrow}{\shpr}}
\nc{\ox}{\overline{\frak x}}
\nc{\oX}{\overline{X}}
\nc{\tree}{\mathbb{T}} \nc{\trees}{\mathbb{T}^\star} \nc{\treehh}{\bfktree^\star}
\nc{\treess}{\mathbb{T}^{\star_1, \star_2}}\nc{\etree}{\mathbb{1}}\nc{\bfktree}{\mathscr{T}}
\nc{\lbar}[1]{\overline{#1}} \nc{\sub}[1]{[#1]}\nc{\suba}[1]{|_{#1}} \nc{\lea}{{\rm L}}\nc{\fg}{{\rm fg}}\nc{\wt}{{\rm wt}} \nc{\degr}{{\rm deg}} \nc{\re}[1]{R(#1)}
\nc{\oid}{\mathrm{OId}} \nc{\irr}{{\rm Irr}}\nc{\irrl}{{\rm Irr_{Lie}}}   \nc{\pis}{\Pi_S} \nc{\dps}{\dotplus}
\nc{\astarrow}{\overset{\raisebox{-2pt}{{\scriptsize $\ast$}}}{\rightarrow}}
\nc{\tvarrow}[3]{#1 \overset{(t,v)}{\longrightarrow}_{#3} #2}\nc{\gs}{Gr\"{o}bner-Shirshov\xspace}
\nc{\rcp}{Rota's Program on Algebraic Operators\xspace}
\nc{\rcpo}{Rota's Program on Algebraic Operators for operads\xspace}
\nc\olie{operated Lie algebra\xspace}
\nc\olies{operated Lie algebras\xspace}
\nc\Olies{Operated Lie algebras\xspace}
\nc{\inp}{\mathrm{In}}\nc{\kirr}{\bfk\irr(S)} \nc{\rbp}{\mathscr{RBP}}
\nc\dlex{<_{dlex}} \nc\plex{<_{{\rm plex}}} \nc\pelex{\leq_{{\rm plex}}} \nc\cond{{j'<j \, \text{or}\atop  i'<i,\,j'=j}}
\nc{\compcs}{compatible compositions }
\nc{\compc}{compatible composition }
\nc\bws[1]{{\lfloor#1\rfloor}}\nc\opmx{\bfk\mathfrak{M}(X)}\nc\opmxm{\mathfrak{M}(X)}\nc\opm[1]{\mathfrak{M}(#1)}
\nc\opmz{\bfk\mathfrak{M}(Z)}\nc\opmzm{\mathfrak{M}(Z)}
\nc\Id{\rm Id}\nc\sopm[1]{\mathfrak{S}^\star(#1)}
\nc\blw[1]{\lfloor#1\rfloor}\nc\plie[1]{\mathfrak{S}(#1)}\nc\plien[1]{\mathcal{N}(#1)}\nc\nas[1]{{#1}^\ast}
\nc\ordc{>_{{\rm Dl}}} \nc\ordqc{\geq_{{\rm Dl}}}
\nc\ord{>_{{\rm IM }}}\nc\ordq{\geq _{\rm IM}}
\nc\ordd{>_{{\rm IM}}}\nc\ordqd{\geq_{{\rm IM}}}
\nc\ordb{>_{{\rm IM}}}\nc\ordqb{\geq_{{\rm IM}}}
\nc\alsw[1]{{\rm ALSW}(#1)} \nc\nlsw[1]{{\rm NLSW}(#1)}\nc\alsbw[1]{{\rm ALSBW}_{\ordq}(#1)} \nc\nlsbw[1]{{\rm NLSBW}_{\ordq}(#1)}
\nc\alsbwo[2]{{\rm ALSBW}_{#2}(#1)} \nc\nlsbwo[2]{{\rm NLSBW}_{#2}(#1)}
\nc\oplie{{\rm OLie}(X)}\nc\sopma[1]{\mathfrak{M}^\star(#1)}\nc\Pia[1]{\Pi_{#1}^{\rm ass}}\nc\Pil[1]{\Pi_{#1}^{\rm Lie}}
\nc\opliez{{\rm OLie}(Z)}\nc\opliex{{\rm OLie}(\{x,y\})}
\nc\nlsbwd[1]{\nlsw{\Delta{(#1)}}}\nc\der[2]{{#1}^{(#2)}}
\begin{document}
	
\title[Algebraic operators on Lie algebras]{Operator identities on Lie algebras, rewriting systems and Gr\"obner-Shirshov bases}

\author{Huhu Zhang}
\address{School of Mathematics and Statistics,
Lanzhou University, Lanzhou, Gansu 730000, P. R. China}
\email{zhanghh17@lzu.edu.cn}

\author{Xing Gao}
\address{School of Mathematics and Statistics,
Key Laboratory of Applied Mathematics and Complex Systems,
Lanzhou University, Lanzhou, Gansu 730000, P. R. China}
\email{gaoxing@lzu.edu.cn}

\author{Li Guo}
\address{
Department of Mathematics and Computer Science,
Rutgers University,
Newark, NJ 07102, USA}
\email{liguo@rutgers.edu}

\date{\today}
\begin{abstract}
Motivated by the pivotal role played by linear operators, many years ago Rota proposed to determine algebraic operator identities satisfied by linear operators on associative algebras, later called Rota's program on algebraic operators. Recent progresses on this program have been achieved in the contexts of operated algebra, rewriting systems and \gsbs. These developments also suggest that Rota's insight can be applied to determine operator identities on Lie algebras, and thus to put the various linear operators on Lie algebras in a uniform perspective.
This paper carries out this approach, utilizing operated polynomial Lie algebras spanned by non-associative Lyndon-Shirshov bracketed words. The Lie algebra analog of Rota's program was formulated in terms convergent rewriting systems and equivalently in terms of \gsbs. This Lie algebra analog is shown to be compatible with Rota's program for associative algebras. As applications, a classification of differential type operators and Rota-Baxter operators are presented.
\end{abstract}

\subjclass[2010]{
	05A05,   %words
	17B40, %Automorphisms, derivations, other operators for Lie algebras and super algebras
	17B38,	%Yang-Baxter equations and Rota-Baxter operators
	16Z10,	%GS bases in associative algebra
	17A61,	%GS bases in nonassociative algebra
	16S10, %Rings determined by universal properties (free algebras, coproducts, adjunction of inverses, etc.)
	13P10, %Grobner bases; other bases for ideals and modules
}

\keywords{\rcp; rewriting systems, Gr\"obner-Shirshov basis; operated Lie algebra; differential type operator; Rota-Baxter type operator}
\maketitle

\tableofcontents

\setcounter{section}{0}

\section{Introduction}

This paper studies operator identities on Lie algebras that are compatible with the Lie algebra structure, borrowing the insight from operator identities on associative algebras in the context of Rota's classification problem on operator identities.

\subsection{Linear operators on associative algebras and Rota's program on operator identities}

Various linear operators, characterized by the operator identities they satisfy, have played a pivotal role in mathematics research throughout the history and have attracted great interest in recent years.

The most well-known instances include
\begin{enumerate}
	\item the homomorphisms on algebras in broad areas, in particular in Galois theory, characterized by the operator identity
	$$ f(xy)=f(x)f(y);$$
	\item the derivations in analysis characterized by the Leibniz rule
	$$ d(xy)=d(x)y+xd(y);$$
	\item the integral operator characterized by the integration by parts formula
	$$ \int_a^x f'(t)g(t)\,dt = f(t)g(t)|_a^x - \int_a^x f(g)g'(t)\,dt;$$
	\item the Rota-Baxter operator that arose from probability~\mcite{Ba} and later found broad applications including renormalization of quantum field theory~\mcite{CK,Gub,Ro}, satisfying the operator identity
\begin{equation}
	P(x)P(y)=P(xP(y))+P(P(x)y)+\lambda P(xy).
	\mlabel{eq:rbo}
\end{equation}
\end{enumerate}
Some other examples are the averaging operator, Reynolds operator and the Hibert transform (also called modified Rota-Baxter operator)~\mcite{Co,Kf,Re,Tri}. See~\mcite{GGZh} for further details.

To address such linear operators uniformly, we call a linear operator {\bf algebraic} if it satisfies an algebra operator identity that is exemplified above and will be defined in general (Definition~\mref{de:algop}).

Thus algebraic operators are ubiquitous in mathematics and its applications. Motivated by this phenomenon, in his landmark paper~\mcite{Ro2}, Rota posed the question of classifying all operator identities satisfied by linear operators, that is, classifying algebraic operators.
\begin{quote}
	In a series of papers, I have tried to show that other linear operators satisfying algebraic identities may be of equal importance in studying certain algebraic phenomena, and I have posed the problem of finding all possible algebraic identities that can be satisfied by a linear operator on an algebra.
\end{quote}

We note that Rota's program aims at not only a summary of known algebraic operator identities, but all the potential algebraic operator identities that might arise in mathematical research.
Since the problem addresses a quite fundamental issue about linear operators on associative algebras, we label this problem Rota's Program on Algebraic Operators.

In a series of papers, Rota's Program on Algebraic Operators was investigated in several steps. First the term operator identities was made precise in general terms by taking them to be elements of the algebra of operated polynomial identities, as a realization of the free objects of algebras equipped by linear operators~\mcite{Guop,Ku}. Next, the operator identities similar to the ones in the above list that Rota wanted to classify were viewed as those that are compatible with the algebraic structure of the associative algebra, that is, the associativity. This viewpoint was made precise by the language of convergent rewriting systems and the method of \gsbs. After special cases of differential type and Rota-Baxter type were treated carefully in~\mcite{GSZ,ZGGS}, a general setup was obtained in~\mcite{GG} with applications to modified Rota-Baxter operators.

This program has two advantages in the study of linear operators. One is that it gives a uniform approach to classes of operators, such the differential type operators and Rota-Baxter type operators, instead of dealing with one operator at a time. For example, free objects of the two types of operators were constructed simultaneously in~\mcite{GSZ,ZGGS}. The second advantage is that the program gives characterizations of the interesting operators, which might lead to the discovery of new operators and further to their classification. For example, the notions of differential type and Rota-Baxter type operators give rise to new operators in \mcite{GSZ,ZGGS}. As seen in this paper, these advantages carry over for linear operators on Lie algebras.

\subsection{Linear operators on Lie algebras}
As in the case of associative algebras, linear operators on Lie algebras have also played an important role in several areas in pure and applied mathematics.

One distinct instance is the Rota-Baxter operator on Lie algebra defined by Eq.~\meqref{eq:rbo} on Lie algebras. It first appeared as the operator form of the classical Yang-Baxter equation and served as a fundamental tool in integrable systems~\mcite{BGN,STS,RS1,RS2}.
Also appearing in the same context is the modified Rota-Baxter operator satisfying the modified classical Yang-Baxter equation~\mcite{Bo,Kup,STS}
$$ [P(x),P(y)]=P([P(x),P])+P([x,P(y)])-[x,y].$$

Another example is the Nijenhuis operator from pseudo-complex manifolds, differential geometry, deformation theory and integrable systems~\mcite{FN,KM,N}.
Here the operator identity is
\begin{equation}
	[N(x),N(y)] = N([N(x),y]) + N([x,N(y)]) - N^2([x,y]).
	\mlabel{eq:Nij}
\end{equation}
The study of Nijenhuis operator on an associative algebra was more recent, first introduced by Carinena and coauthors~\mcite{CGM} to study quantum bi-Hamiltonian systems.
In~\mcite{Uc}, Nijenhuis operators are constructed by analogy with Poisson-Nijenhuis geometry, from relative Rota-Baxter operators.

In recent years, other linear operators such as differential operators, Reynolds operators, Rota-Baxter families and matching Rota-Baxter algebras have been studied for Lie algebras, in the directions of deformations, cohomology and bialgebras~\mcite{Das,GGZ,GLS,LST,TBGS,ZG,ZyGM}. Thus it is natural to adapt the program of Rota on algebraic operators from associative algebras to Lie algebras. This is the purpose of this paper.

\subsection{Rota's program for Lie algebras and an outline of the paper}
In order to adapt the approach to Rota's program of algebraic operators on associative algebras, via operated algebras, rewriting systems and \gsbs, we first regard an operator identity on Lie algebras as an element of the operated Lie polynomial algebra. The latter is given as a realization of the free operated Lie algebras, first obtained in~\mcite{QC} in terms of non-associative Lyndon-Shirshov bracketed words with respect to a particular monomial order. For later applications, we give an axiomatic approach based on the notion of an invariant monomial order (Theorem~\mref{lem:freelie}). This is presented in Section~\mref{ssec:oplie}.

Then in Section~\mref{sec:lie}, we formulate Rota's program on algebraic operators for Lie algebras in the context of rewriting systems and \gsbs (Problems~\mref{prob:rpcltrs} and \mref{prob:rpclgsb}). The equivalence of the two formulations is established (Theorem~\mref{lem:liegreq}). Furthermore, Rota's program for associative algebras is shown to be compatible with Rota's program for Lie algebras (Theorem~\mref{thm:alrcpeq}), allowing us to pass information between the associative case and the Lie algebra case and to explain the phenomenon that operators on associative algebras also appeared for Lie algebras.

As applications of the general results on Rota's program, in particular the connection between the associative and Lie algebra cases, in Section~\mref{sec:app} we focus on the special cases when the operator identities resemble the differential operator and Rota-Baxter operator respectively, allowing us to identify the operator identities that give convergent rewriting systems and \gsbs (Theorems~\mref{thm:gsbdt} and \mref{thm:gsbrbt}). Going beyond these types of operators, we also show that the modified Rota-Baxter operatorhas its operator identity (modified Yang-Baxter equation) giving rise to a convergent rewriting system and a \gsb (Theorem~\mref{thm:modrbDL}).

\section{\Olies}
\mlabel{ssec:oplie}
To formulate \rcp for Lie algebras, we construct free operated Lie algebras.
Before analyzing the related topics, we first present the construction of free \olies (also called Lie $\Omega$-algebras). The construction was first obtained in~\mcite{QC} with respect to a particular monomial order on bracketed words. Here we extend the construction with respect any invariant monomial order, giving us the flexibility for various applications. 

\subsection{Associative and non-associative Lyndon-Shirshov words }
\mlabel{ssec:aslsw}
For a set $Y$, let $S(Y)$ (resp. $M(Y)$) denote the free semigroup (resp. free monoid) on $Y$, consisting of words (resp. words including the empty word $\bfone$) in the alphabet set $Y$.
Let $\nas{Y}$ denote the free magma on $Y$, consisting of non-associative (binary) words on $Y$. Such a word is either in $Y$ or is of the form $(w)=((u)(v))$ for words $(u), (v)\in \nas{Y}$. These words can be realized as planar binary trees~\mcite{Reu}.

It is well known that non-associative Lyndon(-Shirshov) words form a linear basis for a free Lie algebra~\mcite{BC,Shi}.
Let $(X, \geq)$ be a well-ordered set. Define the lex-order $\geq_{\rm lex}$ on the free monoid $M(X)$ over $X$ by
\begin{enumerate}
	\item $\bfone>_{\rm lex} u$ for all nonempty word $u$, and
	\item for any $u = xu'$ and  $v = yv'$ with $x, y\in X$,
$$ u>_{\rm lex} v\,\text{ if }\,  x > y, \text{ or }\, x = y\,\text{ and }\, u'>_{\rm lex} v'.$$
\end{enumerate}
For example, let $x>y$. Then $\bfone>x>xx>xy>\cdots>y>yx>yy.$
\begin{defn} Let $(X, \geq)$ be a well-ordered set.
\begin{enumerate}
\item An associative word $w\in S(X)$ is called an {\bf associative Lyndon-Shirshov word} on $X$ with respect to the lex-order $\geq_{\rm lex}$, if $w =uv >_{\rm lex}vu$ for every decomposition $w = uv$ of $w$ with $u,v\in S(X)$.
\item A non-associative word $(w)\in \nas{X}$  is called a {\bf non-associative Lyndon-Shirshov word} on $X$ with respect to the lex-order $\geq_{\rm lex}$, provided
\begin{enumerate}
  \item the corresponding associative word $w$ is an associative Lyndon-Shirshov word on $X$;
  \item if $(w) = ((u)(v))$, then both $(u)$ and $(v)$ are non-associative Lyndon-Shirshov words on $X$;
  \item if $(w) = ((u)(v))$ and $(u) = ((u_1 )(u_2 ))$, then $v \geq_{\rm lex} u_2.$
\end{enumerate}
\end{enumerate}
\end{defn}

\begin{remark}
An equivalent condition for $w$ to be an associative Lyndon-Shirshov word is that,
for any decomposition of $w = uv$ with $u,v\in S(X)$,
$w>_{\rm lex} v.$
\end{remark}

\begin{exam}
Let $X=\{x,y,z\}$ with $x>y>z$. Then
\begin{enumerate}
  \item The words $xy$, $xyz$ and $xzy$ are associative Lyndon-Shirshov words, but the words $yx$, $yxz$, $yzx$, $zxy$ and $zyx$ are not associative Lyndon-Shirshov words.
  \item The words $(xy)$, $(x(yz))$ and $((xz)y)$ are non-associative Lyndon-Shirshov words, but $(yx)$ and $((xy)z)$ are not non-associative Lyndon-Shirshov words.
\end{enumerate}
\end{exam}

Denote by $\alsw{X}$ (resp. $\nlsw{X}$) the set of all associative (resp. non-associative) Lyndon-Shirshov words on a well-ordered set $X$ with respect to the lex-order $\geq_{\rm lex}$. For any $w\in\alsw{X}$, there exists a unique
procedure, called the {\bf Shirshov standard bracketing}~\mcite{BC,Shi}, to give a non-associative Lyndon-Shirshov word $[w]$. Furthermore,
\begin{equation} \nlsw{X} =\left\{[w]\,|\,w \in\alsw{X}\right\}.
	\mlabel{eq:ssbw}
\end{equation}
It is obtained by a recursion in which each step brackets the minimal letter in the word to the previous letter to give a new letter, and then proceeds to the next step, until there is only one letter left.

We use the following example as an illustration and refer the reader to the original literature for further details.

Take $w=xxyyxy \in \alsw{X}$ on $X = \{x,y\}$ with $x>y$, we obtain $[w]=((x((xy)y))(xy))\in\nlsw{X}$ after the following recursion.
\begin{enumerate}
\item  Bracketing the minimal letter $y$ in $w$ to the previous letters ($\neq y$) to form a new letter, we obtain a new associative Lyndon-Shirshov word $x(xy)y(xy)$ on the new letter set $\{x,(xy),y\}$ with $x>_{\rm lex} (xy) >_{\rm lex}y.$
\item  Bracketing the minimal letter $y$ to the previous letters, we get a new associative Lyndon-Shirshov word $x((xy)y)(xy)$ on the new letter set $\{x,(xy), ((xy)y)\}$ with $x>_{\rm lex}(xy)>_{\rm lex}((xy)y)$.
\item  Bracketing the minimal letter $((xy)y)$ to the previous letters, we have a new associative Lyndon-Shirshov word $(x((xy)y))(xy)$ on the new letter set $\{(x((xy)y)),(xy)\}$ with $(x((xy)y))>_{\rm lex}(xy).$
\item  Bracketing the minimal letter $(xy)$ to the previous letters, we have a new associative Lyndon-Shirshov word $((x((xy)y))(xy))$ on the single letter $\{((x((xy)y))(xy))\}$.
\item  Then $[w]=((x((xy)y))(xy))$ is in $\nlsw{X}$.
\end{enumerate}

Let $X$ be a well-ordered set. Let Lie$(X)$ be the Lie subalgebra of commutator Lie algebra $\big(\bfk \langle X\rangle, (-,-)\big)$ generated by $X$, where $\bfk \langle X\rangle=\bfk  M(X)$ is the free associative algebra on $X$.
As is well known, Lie$(X)$ is a free Lie algebra on the set $X$, with a linear basis given by the non-asssociative Lyndon-Shirshov words $\nlsw{X}$~\mcite{Reu}.

\subsection{Associative and non-associative Lyndon-Shirshov bracketed words }
\mlabel{ssec:anlsw}
In this subsection, we summary the construction of the free \olies in ~\mcite{QC}. An operated version of non-associative Lyndon-Shirshov words will be a linear basis of the free \olie.

For any set $Y$,  let
$$\blw{Y}:=\{\blw{y}\,|\,y\in Y\}$$
denote a copy of $Y$ that is disjoint from $Y$.

Now fix a set $X$. We construct the sets of associative and non-associative bracketed words on $X$ by direct systems $\plie{X}_n$ and $\plien{X}_n, n\geq 0$ recursively defined as follows. First denote
$$\plie{X}_0:=S(X)  \text{  and } \plien{X}_0:=\nas{X}.$$
Next for $k\geq 0$, assume that $\plie{X}_{n}$ and $\plien{X}_{n}$ have been defined for $n< k$, such that
\begin{enumerate}
	\item $\plie{X}_{n+1}:= S(X\cup \blw{\plie{X}_{n}})\,\text{ and }\, \plien{X}_{n+1}:= \nas{(X\cup \blw{\plien{X}_{n}})};$
	\item there are natural injections $\plie{X}_{n}\hookrightarrow\plie{X}_{n+1}$ and $\plien{X}_{n}\hookrightarrow\plien{X}_{n+1}$ of semigroups and magmas.
\end{enumerate}
\delete{ together with
natural embeddings $\plie{X}_{n}\hookrightarrow\plie{X}_{n+1}$ and $\plien{X}_{n}\hookrightarrow\plien{X}_{n+1}$ for $n<k.$}
We then recursively define
$$\plie{X}_{k+1}:= S(X\cup \blw{\plie{X}_{k}})\,\text{ and }\, \plien{X}_{k+1}:= \nas{(X\cup \blw{\plien{X}_{k}})}.$$
Further the injections  $\plie{X}_{k-1}\hookrightarrow\plie{X}_{k}$ and $\plien{X}_{k-1}\hookrightarrow\plien{X}_{k}$ induce set injections
$$X\cup \lc\plie{X}_{k-1}\rc\hookrightarrow X \cup \lc\plie{X}_{k}\rc, \quad X\cup \lc\plien{X}_{k-1}\rc\hookrightarrow X\cup \lc\plien{X}_{k}\rc.$$
Then the functoriality of taking the free semigroup and free magma extends these set injections to the injections
$$\plie{X}_{k}\hookrightarrow\plie{X}_{k+1}\,\text{ and }\, \plien{X}_{k}\hookrightarrow\plien{X}_{k+1}$$
of free semigroups and free magmas.

Finally, define the direct limits
$$\plie{X}:=\bigcup_{n\geq0}\plie{X}_{n}\,\text{ and }\, \plien{X}:=\bigcup_{n\geq0}\plien{X}_{n}.$$
Elements of $\plie{X}$ (resp. $\plien{X}$) are called the {\bf associative  (resp. non-associative) bracketed words on $X$.}
We also denote an element of $\plie{X}$ (resp. $\plien{X}$) by $w$ (resp. $(w)$).

We also recall some basic properties and notations of bracketed words on $X$.

\begin{enumerate}
	\item Every element $w$ of $\plie{X}$ can be uniquely written in the form
	$$w=w_1\cdots w_k,$$
	for $w_i\in X\cup \blw{\plie{X}}$, $ 1 \leq i \leq k, k\geq 1$. We call $w_i$ {\bf prime} and $k$ the {\bf breadth} of $w$, denoted by $|w|$.
	
	\item Define the {\bf depth} of $w\in\plie{X}$ to be ${\rm dep(w)} := {\rm min}\{n\,|\,w\in\plie{X}_n\}.$
	
	\item For any $(w)\in\plien{X}$, there exists a unique $w\in\plie{X}$ by forgetting the brackets of $(w)$. Then we can define the {\bf depth} of $(w)\in\plien{X}$ to be ${\rm dep((w))} := {\rm dep(w)}.$ This agrees with $\min \{n\,|\, (w)\in \plien{X}_n\}$.
	
	\item The {\bf degree} of $w\in\plie{X}$, denoted by ${\rm deg}(w)$, is defined to be the total number of occurrences of all $x\in X$ and $\blw{~}$ in $w$.
\end{enumerate}

For example, if $w=\blw{xy\blw{z}y}xy\in\plie{X}$ with $x,y,z\in X$, then
$$|w|=3, \, {\rm dep}(w)=2\,\text{ and }\, {\rm deg}(w)=8.$$
If $(w)=x((\blw{y}(xy))z)\in \plien{X}$ with $x,y,z\in X$, then $$w=x\blw{y}xyz\in \plie{X}\,\text{ and }\, {\rm dep}((w))={\rm dep}(w)=1.$$

\begin{defn}~\mcite{QC}
\begin{enumerate}
\item An {\bf \olie} is a Lie algebra $L$ together with a linear map
$P_L: L\to L$.
\item A {\bf morphism} from an \olie $(L_1,P_{L_1})$ to an \olie $(L_2,P_{L_2})$
is a Lie algebra homomorphism $f : L_1\to L_2$ such that
$f\circ P_{L_1}=P_{L_2}\circ f.$
\end{enumerate}
\end{defn}

We also recall the notion of star words. See~\mcite{BC,GSZ,GG} for details.
\begin{defn}
Let $\star$ a symbol not in a set $X$ and let $X^\star=X\sqcup \{\star\}$.
\begin{enumerate}
  \item  A word in $\plie{X^\star}$ is called a {\bf $\star$-bracketed word} on $X$ if $\star$ appears only once.
  The set of all $\star$-bracketed words on $X$ is denoted by $\sopm{X}$.
  \item  For $q\in \sopm{X}$ and $u \in\plie{X}$, we define $q\suba{ u}$ (or $q\suba{\star\mapsto u}$) to be the bracketed word on $X$
obtained by replacing the symbol $\star$ in $q$ by $u$.
  \item  For $q\in \sopm{X}$ and $s =\sum_i c_i u_i \in\bfk\plie{X}$, where $c_i\in\bfk$ and $u_i\in\plie{X}$, we define
$q\suba{s}:=\sum_i c_i q\suba{u_i}.$
  \item A bracketed word $u \in\plie{X}$ is a {\bf subword} of another bracketed word $ w\in\plie{X}$ if
$w = q\suba{u}$ for some $q \in\sopm{X}$.
\end{enumerate}
\end{defn}
 A monomial order is a well-order that is compatible with all operations in the algebraic structure. This can be given concisely by $\star$-bracketed words.

\begin{defn}
Let $X$ be a set. A {\bf monomial order} on $\plie{X}$ is a well order $\geq$ on $\plie{X}$ such that
\begin{equation}
\quad  u >  v \Rightarrow  q\suba{u} >  q\suba{v},\,\text{ for all } u,v\in \plie{X} \text{ and } q\in\sopm{X}.
\mlabel{eq:mono}
\end{equation}
\end{defn}

Let $\geq$ be a monomial order on $\plie{X}$.
Given a bracketed word $f\in \bfk\plie{X}$, we let $\lbar{f}$ denote the {\bf leading bracketed word} (monomial) of $f$.
We call $f$ {\bf monic} with respect to $\geq$ if the coefficient of $\lbar{f}$ is 1.
A subset $S\subseteq \plie{X}$ is called {\bf monic} if each element of $S$ is monic.

We introduce the following notion for the construction of free operated Lie algebras. 
\begin{defn}
An order, in particular a monomial order, $\geq$ on $\plie{X}$ is called {\bf invariant} if, for all prime elements $u_1, \ldots, u_n \in \plie{X}$ and $\sigma\in S_n$, we have
\begin{equation}
	u_1 \cdots u_n  \geq u_{\sigma(1)} \cdots u_{\sigma(n)} \Longleftrightarrow u_1 \cdots u_n  \succeq_{\rm lex} u_{\sigma(1)} \cdots u_{\sigma(n)},
	\mlabel{eq:dlor}
\end{equation}
where $\succeq$ is the restriction of $\geq$ to the set of prime elements $X\sqcup\blw{\plie{X}}$. To emphasize, we let $\ordq$ denote an invariant monomial order on $\plie{X}$.
\mlabel{de:inv}
\end{defn}

We display two examples of invariant monomial orders on $\plie{X}$ for later applications. 

\begin{exam}
Let $(X, \geq)$ be a well-ordered set.
Take $u=u_1\cdots u_m$ and $v=v_1\cdots v_n$ in $\plie{X}$, where $u_i$ and $v_j$ are prime. Define
$u\ordc v $ inductively on $\dep(u)+\dep(v)\geq 0$.
For the initial step of $\dep(u)+\dep(v) = 0$, we have $u,v\in S(X)$ and define $u\ordc v$ by
$u >_{\rm deg-lex} v$, that is, $$u \ordc v \, \text{ if }\, ({\rm deg}(u), |u|,  u_1, \ldots, u_m) >({\rm deg}(v), |v|,  v_1, \ldots, v_n) \, \text{ lexicographically}.$$
Here notice that ${\rm deg}(u)= |u|$ and ${\rm deg}(v)= |v|$.
For the induction step, if $u = \lc u'\rc$ and $v = \lc v' \rc$, then define
$u\ordc v \,\text{ if }\, u' > v'.$
Otherwise, define
$$u\ordc v \, \text{ if }\, ({\rm deg}(u), |u|,  u_1, \ldots, u_m) >({\rm deg}(v), |v|,  v_1, \ldots, v_n) \, \text{ lexicographically}.$$
Then $\ordqc$ is a monomial order~\cite{QC} and is invariant. 
\mlabel{ex:inva1}
\end{exam}

\begin{exam}
Let $(X, \geq)$ be a well-ordered set.  Denote by $\deg_X(u)$ the
number of $x \in X$ in $u$ with repetition. Define the order $\geq_{\rm dt}$ on $\plie{X}$ as follows. For any $u=u_1\cdots u_m$ and $v=v_1\cdots v_n$, where $u_i$ and $v_j$ are prime. Define
$u>_{\rm dt} v $ inductively on $\dep(u)+\dep(v)\geq 0$.
For the initial step of $\dep(u)+\dep(v) = 0$, we have $u,v\in S(X)$ and define $u>_{\rm dt} v$ if $u>_{\rm deg-lex} v$, that is
$$(\deg_X(u), u_1, \ldots, u_m) > (\deg_X(v), v_1, \ldots, v_n) \, \text{ lexicographically}.$$
For the induction step, if $u = \lc u'\rc$ and $v = \lc v' \rc$, define
$$u>_{\rm dt} v \,\text{ if }\,  {u'} >_{\rm dt} {v'}.$$
If $u = \lc u'\rc$ and $v\in X$, define $u >_{\rm dt} v$.
Otherwise, define
$$u>_{\rm dt} v \, \text{ if }\, (\deg_X(u),  u_1, \ldots, u_m) >(\deg_X(v),  v_1, \ldots, v_n) \, \text{ lexicographically}.$$
Then $\geq_{\rm dt}$ is a monomial order~\cite{GSZ}, which is also invariant. 
\mlabel{ex:inva2}
\end{exam}

To formulate \rcp for Lie algebras, we fix an invariant monomial order $\ordq$ on $\plie{X}$ and construct a free operated Lie algebra on $X$. It is defined by a direct system of bracketed words from a recursion. First define
$$\alsbw{X}_0:=\alsw{{X}}$$
and then
$$\nlsbw{X}_0:=\nlsw{{X}}=\{[w]\,|\,w\in \alsbw{X}_0\}$$
%\lir{Recall the notion $[w]$. }
with respect to the order $\succeq_{{\rm lex}}$.
Here $[w]$ is obtained by using the Shirshov bracketing for $w$. For the recursion step, for any given $n\geq 0$, assume that we have defined
$$\alsbw{X}_{n}$$
and then
$$\nlsbw{X}_{n}=\{[w]\,|\,w\in \alsbw{X}_{n}\}$$
with respect to the order $\succeq_{{\rm lex}}$.
Then we first define
$$\alsbw{X}_{n+1}:=\alsw{X\sqcup \blw{\alsbw{X}_{n}}}$$
with respect to the order $\succeq_{{\rm lex}}$.
We then denote
$$\big[X\sqcup \blw{\alsbw{X}_{n}}\big]:=\left\{[w]:=
\left\{\left .
\begin{array}{ll}
w, & \hbox{if  $w\in X$}, \\
\blw{[w']}, & \hbox{if $w=\blw{w'}$}
\end{array}
\right.\,\right|\, w \in X\sqcup \blw{\alsbw{X}_{n}}\right\}$$
and define
\begin{equation}
\nlsbw{X}_{n+1}:=\nlsw{\big[X\sqcup \blw{\alsbw{X}_{n}}\big]}=\{[w]\,|\,w\in\alsbw{X}_{n+1}\}
\mlabel{eq:bnlsbw}
\end{equation}
with respect to the order $\succeq_{\rm lex}$.
We finally denote
$$\alsbw{X}:=\bigcup_{n\geq0}\alsbw{X}_n\,\text{ and }\,\nlsbw{X}:=\bigcup_{n\geq0}\nlsbw{X}_n.$$
Then we have
\begin{equation}
\nlsbw{X}=\big\{[w]\,|\,w\in\alsbw{X}\big\}.
\mlabel{eq:astolie}
\end{equation}
Elements of $\alsbw{X}$ (resp. $\nlsbw{X}$) are called the {\bf associative (resp. non-associative) Lyndon-Shirshov bracketed words or Lyndon-Shirshov $\Omega$-words} with respect to $\ordq$.

It follows from Eq.~(\mref{eq:astolie}) that any associative Lyndon-Shirshov bracketed word $w$ is associated to a unique non-associative Lyndon-Shirshov bracketed word $[w]$. For $[u], [v] \in \nlsbw{X}$, define $[u]\geq [v]$ if and only if $u\geq v$. Note that since $[u]\mapsto u$ is bijective, we can compare $[u], [v]$ by comparing $u,v$. So the orders
are essentially on associative words, and non-associative (Lyndon-Shirshov bracketed) words are compared by the corresponding associative words.

\begin{exam}
Suppose that $w=\blw{xyz}\bws{x}\bws{y}\in \alsbwo{X}{\ordq}$ with $x\ord y\ord z\in X$ and $\blw{xyz}\ord\bws{x}\ord\bws{y}.$ Then $\blw{xyz}\succ\bws{x}\succ\bws{y}$ and so $\blw{(x(yz))}\succ\bws{x}\succ\bws{y}$. It follows from Eq.~\meqref{eq:bnlsbw} that
      $$[w]=[[\blw{xyz}][\bws{x}][\bws{y}]]=[\blw{[xyz]}\bws{[x]}\bws{[y]}] = [\blw{(x(yz))}\bws{x}\bws{y}].$$
Further for the word $\blw{(x(yz))}\bws{x}\bws{y}$ on
$\{\blw{(x(yz))}, \bws{x}, \bws{y}\}$, applying the Shirshov standard bracketing, we obtain
$[w] = \big( \blw{(x(yz))}(\bws{x}\bws{y})\big).$
\end{exam}

Denote by $\oplie$ the operated Lie subalgebra of $\bfk\plie{X}$ generated by $X$ under the Lie bracket $(u,v) = uv - vu$. We show that it is a free operated Lie algebra on $X$. 

\begin{lemma}
Let $u\in \alsbw{X}$ and $[u]\in\nlsbw{X}$. Considering $[u]$ as a polynomial in $\bfk\plie{X}$, then $\lbar{[u]}=u$ with respect to the invariant monomial order $\ordq$ on $\plie{X}$.
\mlabel{lem:ldt}
\end{lemma}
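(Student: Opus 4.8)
The plan is to prove the slightly stronger statement that, as a polynomial in $\bfk\plie{X}$, the non-associative Lyndon-Shirshov bracketed word $[u]$ is monic with leading monomial $u$; that is,
$$[u]=u+\sum_i\alpha_i v_i,\qquad \alpha_i\in\bfk,\ \ v_i\in\plie{X},\ \ v_i\neq u,\ \ u\ord v_i,$$
where $\ord$ denotes the strict part of $\ordq$. I would induct on $\dep(u)$, the engine being the classical Shirshov lemma~\mcite{BC,Shi,Reu}: over an ordered alphabet $A$, the Shirshov standard bracketing $[w]$ of an associative Lyndon-Shirshov word $w=b_1\cdots b_k$ with $b_i\in A$, once expanded in the free associative algebra via $(p,q)=pq-qp$, equals $b_1\cdots b_k$ plus a $\bfk$-linear combination of words obtained by permuting the letters $b_1,\dots,b_k$, each strictly smaller than $b_1\cdots b_k$ for the lexicographic order induced by the order of $A$; in particular the coefficient of $b_1\cdots b_k$ is $1$.

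For the base case $\dep(u)=0$, $u\in\alsw{X}$ and $[u]\in\nlsw{X}$ is precisely the classical Shirshov standard bracketing of $u$. By the Shirshov lemma, $[u]=u+\sum_i\alpha_i v_i$ where each $v_i$ is a permutation of the prime letters of $u$ (which are elements of $X$) with $v_i\prec_{\rm lex}u$. Since the $v_i$ are permutations of the primes of $u$, the invariance of $\ordq$ (Definition~\mref{de:inv}) upgrades $v_i\prec_{\rm lex}u$ to $u\ord v_i$, so $[u]$ is monic with $\lbar{[u]}=u$.

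For the inductive step, let $\dep(u)=d\ge1$ and write $u=a_1\cdots a_k$ with prime letters $a_i\in X\sqcup\blw{\alsbw{X}_{d-1}}$, so that $u$ is an associative Lyndon-Shirshov word over this alphabet, which is ordered by the restriction $\succeq$ of $\ordq$; by construction (Eq.~\meqref{eq:bnlsbw}) $[u]$ is the Shirshov standard bracketing of the word $[a_1]\cdots[a_k]$ over $\big[X\sqcup\blw{\alsbw{X}_{d-1}}\big]$, where $[x]=x$ for $x\in X$ and $[\blw{w'}]=\blw{[w']}$ for $w'\in\alsbw{X}_{d-1}$. First, each factor $[a_i]$ is monic with $\lbar{[a_i]}=a_i$: this is immediate for $a_i\in X$, and for $a_i=\blw{w'}$ one has $\dep(w')<d$, so by the induction hypothesis $[w']$ is monic with leading monomial $w'$, whence, since $\blw{\ }$ is linear and $\ordq$ is a monomial order, $[a_i]=\blw{[w']}$ is monic with leading monomial $\blw{w'}=a_i$. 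Next, since the order on $\big[X\sqcup\blw{\alsbw{X}_{d-1}}\big]$ agrees with that on $X\sqcup\blw{\alsbw{X}_{d-1}}$ under $[a]\leftrightarrow a$, the Shirshov lemma at the top level gives
$$[u]=[a_1]\cdots[a_k]+\sum_\sigma\alpha_\sigma\,[a_{\sigma(1)}]\cdots[a_{\sigma(k)}],$$
the sum over permutations $\sigma$ of $\{1,\dots,k\}$ with $a_{\sigma(1)}\cdots a_{\sigma(k)}\prec_{\rm lex}a_1\cdots a_k$. As every factor $[a_{\sigma(j)}]$ is monic with leading monomial $a_{\sigma(j)}$, each monomial of $[u]$ has the shape $b_1\cdots b_k$ with $b_j$ a prime occurring in $[a_{\sigma(j)}]$, hence $a_{\sigma(j)}\ordq b_j$; replacing one prime at a time --- via property \meqref{eq:mono} with $q=b_1\cdots b_{j-1}\star b_{j+1}\cdots b_k\in\sopm{X}$ --- yields $a_{\sigma(1)}\cdots a_{\sigma(k)}\ordq b_1\cdots b_k$, with equality only if $b_j=a_{\sigma(j)}$ for all $j$. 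If $\sigma$ is not the identity, invariance turns $a_{\sigma(1)}\cdots a_{\sigma(k)}\prec_{\rm lex}u$ into $u\ord a_{\sigma(1)}\cdots a_{\sigma(k)}$, which together with the previous inequality forces $u\ord b_1\cdots b_k$; if $\sigma$ is the identity, $u\ordq b_1\cdots b_k$ with equality precisely when $b_j=a_j$ for all $j$. Hence the monomial $u$ arises in $[u]$ only from $\sigma=\mathrm{id}$ with every $b_j=a_j$, with total coefficient $\alpha_{\mathrm{id}}\prod_j(\text{leading coefficient of }[a_j])=1$, while every other monomial $v$ of $[u]$ satisfies $u\ord v$. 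Thus $[u]$ is monic with $\lbar{[u]}=u$, closing the induction.

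The hard part is the bookkeeping in the inductive step: one must run two notions of ``smallness'' in parallel --- the lexicographic smallness of the permuted prime-letter words coming from the Shirshov lemma, and the $\ordq$-smallness of the lower-order terms coming from the induction hypothesis --- and check that they combine correctly. The invariance axiom of Definition~\mref{de:inv} is exactly the bridge between them, so no genuinely new idea is needed beyond careful indexing; the ``replace one prime at a time'' manipulations are routine applications of property \meqref{eq:mono}.
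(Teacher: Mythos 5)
Your proof is correct and follows essentially the same route as the paper's: induction on depth, the Shirshov-type expansion of the standard bracketing (the paper cites \cite[Lemma 2.2]{QC} for exactly the fact you invoke), the invariance axiom of Definition~\ref{de:inv} to upgrade the lexicographic comparison of permuted prime words to an $\ordq$-comparison, and the monomial-order property \eqref{eq:mono} to control products of leading terms. You merely carry out the bookkeeping (monicity of each prime factor and the prime-by-prime replacement) more explicitly than the paper does, which is a harmless strengthening rather than a different argument.
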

\begin{proof}
We use induction on ${\rm dep(u)}\geq 0$.
If ${\rm dep(u)}=0$, the result holds by~\cite[Lemma 2.2]{QC}.
Assume the result is true for $u\in \alsbw{X}$ with ${\rm dep(u)}\leq n-1.$
Depending on the breadth $|u|\geq 1$, we have the following two cases to consider.

\noindent {\bf Case 1.} $|u|=1$. Then $u=\blw{u'}$ for some $u'\in\alsbw{X}$ and so
$[u]=\blw{[u']}$. Hence
  $$\lbar{[u]}=\lbar{\blw{[u']}}=\blw{\lbar{[u']}}=\blw{u'}=u.$$
Here the second step following from that $\ordq$ is a monomial order and
the third step employs the induction hypothesis.

\noindent {\bf Case 2.} $|u|>1$. Then we can write $u=u_1\cdots u_k$ for some $k>1$ and $u_i\in X\cup\blw{\plie{X}}.$
By Eq.~\meqref{eq:bnlsbw}, $[u]=[[u_1]\cdots [u_k]]$. Further by~\cite[Lemma 2.2]{QC},
$$[u]=[[u_1]\cdots [u_k]]=[u_1]\cdots [u_k]+\sum_{\sigma\in S_k\setminus \{\id\}} k_\sigma[u_{\sigma(1)}]\cdots [u_{\sigma(k)}].$$
Since $[u]$ as a non-associative Lyndon-Shirshov word on the letter set $\{[u_1],\ldots, [u_k]\}$, we have
$$[u_1]\cdots [u_k] \succ_{\rm lex} [u_{\sigma(1)}]\cdots [u_{\sigma(k)}]\,\text{ for any }\,\sigma\in S_k\setminus \{\id\},$$
and by Eq.~(\mref{eq:dlor}),
$$[u_1]\cdots [u_k] \ord [u_{\sigma(1)}]\cdots [u_{\sigma(k)}]\,\text{ for any }\,\sigma\in S_k\setminus \{\id\}.$$
Finally by Case 1 and the property of monomial order,
$$\lbar{[u]}=\lbar{[u_1]\cdots [u_k]}=\lbar{[u_1]}\cdots \lbar{[u_k]}=u_1\cdots u_k=u.$$
This completes the proof.
\end{proof}

\begin{lemma}
	Let $X$ be a well-ordered set. For any $(u)\in\plien{X}$, it has a representation
	$(u)=\sum_i \alpha_i[u_i]$ in $\opliez$, where each $\alpha_i\in\bfk$ and $[u_i]\in \nlsbw{X}$.
	\mlabel{lem:nassb}
\end{lemma}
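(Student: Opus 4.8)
The plan is to argue by induction on the pair $(\dep((u)), |(u)|)$ ordered lexicographically, mirroring the case analysis in the proof of Lemma~\mref{lem:ldt}. The statement to prove is that every non-associative bracketed word $(u)\in\plien{X}$ can be rewritten, inside $\opliez$, as a $\bfk$-linear combination of non-associative Lyndon-Shirshov bracketed words $[u_i]\in\nlsbw{X}$. The base case is $\dep((u))=0$, i.e.\ $(u)\in\nas{X}$ is an ordinary non-associative word on $X$: here the claim is exactly the classical fact that $\nlsw{X}$ is a linear basis of $\mathrm{Lie}(X)$, so $(u)$ expands over $\nlsw{X}=\nlsbw{X}_0$ with scalar coefficients; this is quoted in Subsection~\mref{ssec:aslsw} from~\mcite{Reu,BC,Shi}.

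For the inductive step, suppose the result holds for all non-associative bracketed words of strictly smaller depth, and, at the given depth, for all those of strictly smaller breadth. Write $(u)$ according to its outermost structure. If $|(u)|=1$, then either $(u)\in X$ (trivial, it is already in $\nlsbw{X}_0$) or $(u)=\blw{(u')}$ for some $(u')\in\plien{X}$ with $\dep((u'))<\dep((u))$; by induction $(u')=\sum_i\alpha_i[u_i']$ in $\opliez$, and applying the operator $P$ (which is linear and preserves $\opliez$) gives $(u)=\blw{(u')}=\sum_i\alpha_i\blw{[u_i']}$, where each $\blw{[u_i']}=[\blw{u_i'}]\in\nlsbw{X}$ by Eq.~\meqref{eq:bnlsbw}. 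If $|(u)|>1$, then $(u)=((v)(w))$ for non-associative bracketed words $(v),(w)$, each of depth $\le\dep((u))$ and breadth $<|(u)|$; by induction each expands over $\nlsbw{X}$, so by bilinearity of the Lie bracket it suffices to treat $(u)=([v'][w'])$ with $[v'],[w']\in\nlsbw{X}$. Decompose each of $[v'],[w']$ into its prime constituents in $X\sqcup\blw{\alsbw{X}}$; then $(u)$ is a bracketing of a word $p_1\cdots p_k$ of prime elements $p_j\in\big[X\sqcup\blw{\alsbw{X}}\big]$, and one invokes the classical rewriting of an arbitrary non-associative word on a letter alphabet into non-associative Lyndon-Shirshov words on that alphabet — again the basis property of $\nlsw{-}$ applied to the alphabet $\big[X\sqcup\blw{\alsbw{X}}\big]$, combined with Eq.~\meqref{eq:bnlsbw} which identifies $\nlsw{[X\sqcup\blw{\alsbw{X}_{n}}]}$ with a subset of $\nlsbw{X}$. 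This yields $(u)=\sum_i\alpha_i[u_i]$ with $[u_i]\in\nlsbw{X}$, as desired.

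I expect the main subtlety to be the bookkeeping in the $|(u)|>1$ case: one must be careful that after expanding $(v)$ and $(w)$ over $\nlsbw{X}$, the prime constituents $p_j$ appearing really do lie in a single level $\big[X\sqcup\blw{\alsbw{X}_{n}}\big]$ for $n$ large enough (this is fine because any finite collection of elements of $\alsbw{X}$ lies in some $\alsbw{X}_n$ by the direct-limit construction), and that the classical Lyndon-Shirshov expansion on this finite alphabet is genuinely available — which it is, since $\big[X\sqcup\blw{\alsbw{X}_{n}}\big]$ is a set and $\nlsw{-}$ is defined for any well-ordered set, with the well-order here being $\succeq_{\mathrm{lex}}$ restricted to primes. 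Everything else (linearity of $P$, bilinearity of $(-,-)$, the passage $\blw{[u']}=[\blw{u'}]$) is routine, and Lemma~\mref{lem:ldt} is not strictly needed for existence of the expansion, though it reassures us the expansions are nontrivial and compatible with leading terms.
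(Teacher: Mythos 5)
Your proposal is correct and follows essentially the same route as the paper: induction on depth, then on breadth, with the bracket case handled by linearity of the operator and the product case reduced to the classical expansion of non-associative words over non-associative Lyndon--Shirshov words on a suitable alphabet (the paper's citation of \cite[Lemma 2.1]{QC}). The only cosmetic difference is that you expand down to the prime constituents before invoking the classical basis property, whereas the paper applies it directly to the alphabet $\{[w_{1k}],[w_{2\ell}]\}$; both land in $\nlsbw{X}$ via Eq.~\meqref{eq:bnlsbw}.
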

\begin{proof}
We first employ induction on ${\rm dep(u)}\geq 0$.
The initial step of ${\rm dep(u)}=0$ follows from~\cite[Lemma 2.1]{QC}.
For the induction step, we reduce the proof to the induction on the breadth $|u|\geq 1$. If $|u|=1$, then $u=\blw{u'}$ for some $u'\in\plie{X}$ and so
	$(u)=\blw{(u')}$. By the induction hypothesis on depth,
	$(u')=\sum_i \alpha_i[{u}_i']$ for some $\alpha_i\in \bfk$ and $[{u}_i']\in\nlsbw{X}$. Hence
	$$(u)=\blw{(u')}=\sum_i \alpha_i\blw{[{u}_i']}\in \opliez.$$
	If $|u|>1$,  then $(u)=((w_1)(w_2))$ for $w_1, w_2\in\plie{X}$. By the induction hypothesis on breadth,
	we have
	$$(w_1)=\sum_k \beta_k[{w}_{1k}]\,\text{ and }\, (w_2)=\sum_{\ell} \gamma_{\ell}[{w}_{2{\ell}}].$$
	Hence
	$$(u)=((w_1)(w_2))=\Big(\Big(\sum_k \beta_k[{w}_{1k}]\Big)\Big(\sum_{\ell} \gamma_{\ell}[{w}_{2{\ell}}]\Big)\Big)=\sum_{k,{\ell}}\beta_k\gamma_{\ell}([{w}_{1k}][{w}_{2{\ell}}]),$$
	where each word $([{w}_{1k}][{w}_{2\ell}])$ is a non-associative word on the letter set $\{[w_{11}],[w_{21}],\ldots, [w_{1k}],[w_{2k}],\ldots\}$.
	By~\cite[Lemma 2.1]{QC},
	$$(u)=\sum_{k,{\ell}}\beta_k\gamma_{\ell}([{w}_{1k}][{w}_{2\ell}])=\sum_i \alpha_i[u_i],$$
	for some $\alpha_i\in \bfk$ and $[u_i]\in{\rm NLSW(\{[w_{11}],[w_{21}],\ldots, [w_{1k}],[w_{2k}],\ldots\})}\subseteq \nlsbw{X}$.
\end{proof}

For example, let $(u)=\big((\bws{x}\bws{y})\bws{z}\big)$ with $x>y>z$. Then $(u)=\big(\bws{x}(\bws{y}\bws{z})\big)+\big((\bws{x}\bws{z})\bws{y}\big)$, where $\big(\bws{x}(\bws{y}\bws{z})\big)$ and $\big((\bws{x}\bws{z})\bws{y}\big)$ are two non-associative Lyndon-Shirshov bracketed words.

\begin{theorem}
For a given invariant monomial order $\ordq$ on $\plie{X}$, the operated Lie algebra $\oplie$, together with the natural embedding $X\to \oplie$,
is a free \olie on $X$ with a linear basis $\nlsbw{X}$.
\mlabel{lem:freelie}
\end{theorem}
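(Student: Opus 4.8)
The plan is to realize $\oplie$ as a quotient of the free operated non-associative algebra on $X$ and then identify it, through the Lyndon-Shirshov bracketed words, with the abstract free operated Lie algebra. Recall that $\bfk\plien{X}$ is the free operated non-associative $\bfk$-algebra on $X$: for any $\bfk$-space carrying a bilinear product and a linear operator, and any set map from $X$, there is a unique morphism of such structures extending it. Since $\oplie$, with the commutator product $(u,v)=uv-vu$ and the operator $\blw{\ }$, is such a structure containing $X$, the inclusion $X\hookrightarrow\oplie$ extends uniquely to an operated non-associative algebra morphism $\psi\colon\bfk\plien{X}\to\oplie$; and $\psi$ is surjective because $X$ generates $\oplie$ under the bracket and the operator. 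Let $J\subseteq\bfk\plien{X}$ be the operated ideal generated by all antisymmetry elements and all Jacobi elements. A routine check of universal properties (the operated analogue of presenting a free Lie algebra as the free non-associative algebra modulo antisymmetry and Jacobi) shows $\bfk\plien{X}/J$ is the free operated Lie algebra on $X$. Since $\oplie$ is an operated Lie algebra, $J\subseteq\ker\psi$, so $\psi$ induces a surjective operated Lie algebra morphism $\bar\psi\colon\bfk\plien{X}/J\twoheadrightarrow\oplie$.

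It remains to show $\bar\psi$ is an isomorphism and to pin down the basis. For spanning, I would observe that the reduction carried out in the proof of Lemma~\mref{lem:nassb} — rewriting an arbitrary non-associative bracketed word as a $\bfk$-linear combination of elements of $\nlsbw{X}$ — uses nothing beyond bilinearity, linearity and $P$-equivariance, antisymmetry, and the Jacobi identity (its depth-$0$ input being the classical fact that $\nlsw{X}$ spans a free Lie algebra). Hence the same reduction is valid in the operated Lie algebra $\bfk\plien{X}/J$, so this quotient is spanned by the images of $\nlsbw{X}$. For independence, Lemma~\mref{lem:ldt} shows that each $[u]\in\nlsbw{X}$, viewed in $\bfk\plie{X}$, is monic with leading bracketed word $u$; since distinct $u\in\alsbw{X}$ are distinct elements of $\plie{X}$, the set $\nlsbw{X}$ has pairwise distinct leading monomials and is therefore a linearly independent subset of $\oplie$. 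Now $\bar\psi$ carries the spanning set $\{\,\overline{[u]}\mid [u]\in\nlsbw{X}\,\}$ of $\bfk\plien{X}/J$ bijectively onto the linearly independent set $\nlsbw{X}\subseteq\oplie$; it follows formally that this spanning set is a basis of $\bfk\plien{X}/J$ and that $\bar\psi$ is a linear, hence operated Lie algebra, isomorphism. Consequently $\oplie$, together with $X\hookrightarrow\oplie$, is a free operated Lie algebra on $X$, and $\nlsbw{X}$ is a linear basis of it.

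The only step carrying real content is the spanning claim inside $\bfk\plien{X}/J$: one must verify that the rewriting procedure behind Lemma~\mref{lem:nassb} (and behind the Lyndon-Shirshov basis theorem used at depth $0$) invokes only the defining identities of an operated Lie algebra, so that it descends to the abstract quotient. Everything else — the universal properties, the factorization of $\psi$ through $J$, and the bookkeeping that a surjection sending a spanning set bijectively onto a linearly independent set is an isomorphism — is formal. I would also remark that Lemmas~\mref{lem:ldt} and~\mref{lem:nassb} already encapsulate the two nontrivial ingredients (the leading-term computation and the reduction to Lyndon-Shirshov form), so the proof of the theorem is mostly an assembly of these with the universal-property formalities above.
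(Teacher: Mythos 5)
Your proposal is correct, and its mathematical core coincides with the paper's proof: spanning of $\oplie$ by $\nlsbw{X}$ via Lemma~\mref{lem:nassb}, and linear independence via the leading-term statement of Lemma~\mref{lem:ldt} (the paper runs this as an induction on the largest leading word, which is the same argument as your ``pairwise distinct leading monomials'' observation). Where you differ is in how the freeness itself is handled. The paper compresses this into one line (``Since $\oplie$ is a free Lie algebra on $X$, it suffices to prove that $\nlsbw{X}$ is a linear basis''), in effect taking the universal property of $\oplie$ as known from the construction in~\mcite{QC} and reproving only the basis statement for a general invariant monomial order. You instead make the freeness explicit: you present the abstract free operated Lie algebra as $\bfk\plien{X}/J$ with $J$ the operated ideal of antisymmetry and Jacobi elements, observe that the reduction in Lemma~\mref{lem:nassb} uses only the operated Lie axioms and hence descends to this quotient, and then conclude that the canonical surjection onto $\oplie$ carries a spanning set bijectively onto a linearly independent set, forcing it to be an isomorphism. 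This buys a more self-contained argument (it does not lean on the freeness statement of~\mcite{QC}, only on its spanning/leading-term lemmas), at the cost of the extra verification you yourself flag, which is indeed the only point with content and which you justify correctly: the depth-$0$ input is the classical fact that non-associative Lyndon-Shirshov words span the free Lie algebra, and this transfers along surjections. One small caveat: if $\bfk$ is allowed to have characteristic $2$, the ideal $J$ should be generated by the alternating elements $(u)(u)$ (together with Jacobi) rather than by antisymmetry alone, so that $\bfk\plien{X}/J$ is genuinely a Lie algebra in the usual sense; with that adjustment your argument goes through unchanged.
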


\begin{proof}
Since $\oplie$ is a free Lie algebra on $X$, it suffices  to prove that $\nlsbw{X}$ is a linear basis of $\oplie$.
By Lemma~\mref{lem:nassb}, $\nlsbw{X}$ is a linear generating set of $\oplie$.
Suppose
$$\sum_{i=1}^n k_i [u_i]=0,\, \text{ where }\, k_i\in\bfk\,\text{ and }\, [u_i]\in\nlsbw{X}.$$
We are left to show $k_1 = \cdots =k_n=0$, which will be done by induction $n\geq 1$. If $n=1$, then $k_1 = 0$. Consider $n\geq 2$. Without loss of generality, we may assume $u_1\ord \cdots\ord u_n.$ If $k_1\neq0$, then by Lemma~\mref{lem:ldt},
$$\lbar{\sum_{i=1}^n k_i [u_i]} = \lbar{[u_1]} = u_1,$$
a contradiction. So $k_1=0 $ and by the induction hypothesis, $k_2 =\cdots =k_n = 0$, as required.
\end{proof}

Thanks to the above theorem, we propose the following notion.
\begin{defn}
Elements of $\oplie$ are called {\bf operated Lie polynomials}.
For $\phi\in\oplie$, we call $\phi=0$ (or simply $\phi$) an {\bf operated Lie polynomial identity (OLPI)}. When $X=\{x_1,\ldots,x_n\}$, we denote $\phi\in \oplie$ by $\phi(x_1,\ldots,x_n)$.
\end{defn}

Given an \olie $(R,P)$ and a map
$$\theta: X\to R,$$
the universal property of the free \olie $\oplie$ gives a unique morphism
$$\tilde{\theta}:\oplie\to R$$
of \olies extending $\theta$.
We call this morphism the {\bf evaluation map} at $\theta$ and
$$\phi(r_1, \ldots ,r_k):=\tilde{\theta}(\phi(x_1, \ldots ,x_k))$$
the {\bf evaluation} of $\phi\in \oplie$ at $\theta$.

\begin{defn}
Let $\phi\in \oplie$ and $(R,P)$ be an \olie.
\begin{enumerate}
  \item We say that
$R$ is a {\bf $\phi$-Lie algebra} and $P$ is a {\bf $\phi$-operator}, if
$$\phi(r_1, \ldots ,r_k)=0,\,\forall r_1, \ldots ,r_k\in R.$$
More generally, for a subset $\Phi\subseteq\oplie$, we say that $R$ (resp. $P$) a {\bf $\Phi$-Lie algebra} (resp. {\bf $\Phi$-operator}) if $R$ (resp. $P$) is a $\phi$-Lie algebra (resp. $\phi$-operator) for every $\phi\in\Phi$.
\item The linear operator $P$ on the Lie algebra $R$ is called {\bf (Lie) algebraic} if there is $0\neq \Phi\subseteq \oplie$ such that $P$ is a $\Phi$-operator.
\mlabel{de:lalgop}
\end{enumerate}
\end{defn}

For example, for $\phi(x,y) = [\bws{xy}] - [\bws{x}y] - [x\bws{y}]\in\oplie$, a $\phi$-algebra is just a differential Lie algebra. Also, $[\bws{x}\bws{y}]-[\bws{\bws{x}y}]-[\bws{x\bws{y}}]+[xy]$ defines the (operator form of the) modified Yang-Baxter equation on a Lie algebra~\mcite{STS,ZGG1,ZGG2,RS1, RS2}.

\section{\rcp for Lie algebras}
\mlabel{sec:lie}

\rcp is to classify linear operator identities on associative algebras and has been treated in~\mcite{GG,GSZ,ZGGS}. As in the case of associative algebras, linear operators on Lie algebras have also played a pivot role in their studies and applications. This motivates us to adapt the study of \rcp to the context of Lie algebras. This is obtained in this section, utilizing the structure of operated Lie algebra polynomials presented in the previous section.

\subsection{Special normal words}\mlabel{starbw}

The following result plays a key role in defining the rewriting systems and \gsbs for operated Lie algebras.
\begin{lemma}
Let $Z$ be a well-ordered set and let $q\in\sopm{Z}$ and $u$, $q\suba{u}\in\alsbw{Z}$. Then
$[q\suba{u}]=[q'\suba{uc}]$
for some $q'\in\sopm{Z}$ and $c\in\plie{Z}\cup\{\bfone\}$. Let
\begin{equation}
[q\suba{u}]_u:=[q'\suba{[uc]}]_{[uc]\mapsto [\cdots[[[u][c_1]][c_2]]\cdots[c_m]]}=[q'\suba{[\cdots[[[u][c_1]][c_2]]\cdots[c_m]]}],
\mlabel{eq:snw}
\end{equation}
where $c=c_1c_2\cdots c_m$ with each $c_i\in \alsbw{Z}$ and $c_j \prec_{\rm lex} c_{j+1}$, $1\leq j\leq m-1$. Then
\begin{equation}
[q\suba{u}]_u=q\suba{[u]} + \sum_i \alpha_i q_i\suba{[u]},
\mlabel{eq:quu}
\end{equation}
where each $\alpha_i\in\bfk$, $q_i\in\sopm{Z}$ and $\lbar{[q\suba{u}]_u}=q\suba{u}\ord q_i\suba{u}$.
\mlabel{lem:lalsbw}
\end{lemma}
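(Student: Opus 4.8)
The plan is to establish the two assertions of the lemma — the existence of the decomposition $[q\suba{u}]=[q'\suba{uc}]$ and the expansion formula~\meqref{eq:quu} for the polynomial $[q\suba{u}]_u$ — simultaneously, by induction on $\dep(q\suba{u})\ge 0$ and, within each depth, by a secondary induction on the breadth $|q\suba{u}|\ge 1$; this is the same two-layer scheme used in the proofs of Lemmas~\mref{lem:ldt} and~\mref{lem:nassb}. For the base case $\dep(q\suba{u})=0$ the words $q\suba{u}$ and $u$ are ordinary associative Lyndon--Shirshov words on $Z$, and the statement is the classical \emph{special bracketing} of Lyndon--Shirshov words, for which we invoke~\mcite{BC} (and~\mcite{QC} for the translation into the present notation): running the Shirshov standard bracketing recursion on $q\suba{u}$, the prime letters to the right of $u$ that are successively absorbed into $u$ form, in the order of absorption, a $\prec_{\rm lex}$-increasing chain of Lyndon--Shirshov words $c_1,\dots,c_m$ with each $c_j\prec_{\rm lex}u$; writing $c=c_1\cdots c_m$ and letting $q'$ be the $\star$-bracketed word determined by $q=q'\suba{\star c}$, one has $q\suba{u}=q'\suba{uc}$ as words, $uc$ is again a Lyndon--Shirshov word whose Shirshov bracketing is the left-combed tree $[uc]=[\cdots[[[u][c_1]][c_2]]\cdots[c_m]]$, and $[uc]$ occurs as a subtree of $[q\suba{u}]$; hence $[q\suba{u}]=[q'\suba{uc}]$ and $[q\suba{u}]_u$ in~\meqref{eq:snw} is well-defined.

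For the inductive step, assume $\dep(q\suba{u})=n\ge 1$ and that the lemma holds at all smaller depths. If $|q\suba{u}|=1$, then either $q=\star$ (with $c=\bfone$, nothing to prove) or $q\suba{u}=\blw{v}$ with $q=\blw{q_1}$ and $v=q_1\suba{u}\in\alsbw{Z}$ of depth $n-1$; since $[\blw{v}]=\blw{[v]}$ in the construction of $\nlsbw{Z}$ and $\blw{\,\cdot\,}$ is a monomial operation, the decomposition and~\meqref{eq:quu} for $q\suba{u}$ follow from those for $v$ by applying $\blw{\,\cdot\,}$ throughout. If $|q\suba{u}|=k>1$, write $q\suba{u}=p_1\cdots p_k$ with each $p_i\in Z\cup\blw{\plie{Z}}$ prime. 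As in the proof of Lemma~\mref{lem:nassb}, either $u$ lies inside one prime $p_\ell=\blw{w_\ell}$ (so $u$ is a subword of $w_\ell$, of depth $<n$), or $u=p_i\cdots p_j$ is a product of consecutive primes (and $u\in\alsbw{Z}$ forces $p_i\cdots p_j$ to be a Lyndon--Shirshov word over the alphabet $\{p_i,\dots,p_j\}$). In the first case, apply the inductive hypothesis inside $w_\ell$ and then use $[q\suba{u}]=[[p_1]\cdots[p_k]]=[p_1]\cdots[p_k]+\sum_{\sigma\in S_k\setminus\{\id\}}k_\sigma[p_{\sigma(1)}]\cdots[p_{\sigma(k)}]$ from the proof of Lemma~\mref{lem:ldt}. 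In the second case, apply the base case to the word $[p_1]\cdots[p_k]$ over the alphabet of the $[p_\ell]$'s, with subword $[p_i]\cdots[p_j]$ (whose Shirshov bracketing is $[u]$); here the $c_1,\dots,c_m$ are among the primes lying to the right of $u$ in $p_1\cdots p_k$.

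It remains to verify~\meqref{eq:quu}. Expanding the inserted left-combed subtree in $\bfk\plie{Z}$, replacing each magma bracket $([a],[b])$ by $[a][b]-[b][a]$, yields a $\bfk$-linear combination — indexed by subsets $S\subseteq\{1,\dots,m\}$, with sign $(-1)^{|S|}$ — of products in which the factors $[c_j]$ with $j\in S$ stand to the left of $[u]$ (in decreasing order of $j$) and the remaining ones to its right (in increasing order of $j$); the $S=\emptyset$ term is $[u][c_1]\cdots[c_m]$. Using $\overline{[u]}=u$ and $\overline{[c_j]}=c_j$ (Lemma~\mref{lem:ldt}) and substituting back into the ambient context, this gives $[q\suba{u}]_u=q'\suba{[u]c_1\cdots c_m}+\sum_i\alpha_i q_i\suba{[u]}$ with $q'\suba{[u]c_1\cdots c_m}=q\suba{[u]}$ and each $q_i$ obtained from $q'$ by moving some $c_j$'s to the left of $\star$, so that $q_i\suba{u}\ne q\suba{u}$. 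Since $u\succ_{\rm lex}c_j$ for all $j$ and $c_1\prec_{\rm lex}\cdots\prec_{\rm lex}c_m$ (from the base case), the word $uc_1\cdots c_m$ — the only one among the above with all of $c_1,\dots,c_m$ to the right of $u$ — is $\succeq_{\rm lex}$-maximal at the level of prime letters; combined with $q_i\suba{u}\ne q\suba{u}$ and the invariance of $\ordq$ (Definition~\mref{de:inv}), this gives $q\suba{u}\ord q_i\suba{u}$ for every $i$, and hence $\overline{[q\suba{u}]_u}=q\suba{u}$. The genuinely delicate part of the argument is the combinatorics of the breadth-$>1$ case — tracking the special bracketing through an internal prime (respectively through the top-level word of prime letters), checking that the displayed subtree is exactly of the form $q_{\rm new}\suba{[u]}$, and verifying that every auxiliary term produced (both from the expansion just described and from the expansion of $[[p_1]\cdots[p_k]]$) has leading bracketed word strictly below $q\suba{u}$; this is where the invariance of $\ordq$, and not merely its being a monomial order, is used in an essential way.
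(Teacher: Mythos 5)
Your proposal is correct and follows essentially the same route as the paper's proof: induction on $\dep(q\suba{u})$, with the special-bracketing lemma for Lyndon--Shirshov words over the prime alphabet (\mcite{BC}, \cite[Lemma 2.4]{QC}) supplying the base and top-level cases, the case of $u$ inside a bracket handled by the induction hypothesis plus the monomial-order property, and the invariance of $\ordq$ used to upgrade the $\succ_{\rm lex}$ comparisons of permuted prime letters to comparisons in the monomial order. The extra layers you add (a secondary breadth induction and the explicit signed expansion of the left-combed bracket $[\cdots[[[u][c_1]][c_2]]\cdots[c_m]]$) are just a more explicit rendering of what the paper obtains by invoking \cite[Lemma 2.4]{QC} twice.
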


\begin{proof}
We prove by induction on ${\rm dep}(q\suba{u})\geq 0.$
The initial step of ${\rm dep}(q\suba{u})=0$ follows from~\cite[Lemma 2.4]{QC}.
For the inductive step of ${\rm dep}(q\suba{u})>1$, we have two cases to consider.

\noindent {\bf Case 1.} The $u$ is not contained in the operator $\lc\, \rc$,
that is, $q\suba{u}=v_1\cdots v_k u w_1\cdots w_{\ell}$ for some $v_i,w_j\in\alsbw{Z}$. In this case,
$[q\suba{u}]=[v_1\cdots v_k u w_1\cdots w_{\ell}]$. By~\cite[Lemma 2.4]{QC},
$$[q\suba{u}]={[[v_1]\cdots [v_k] [u w_1\cdots w_{j}] [w_{j+1}]\cdots[w_{\ell}]]}
={[v_1\cdots v_k [u c] w_{j+1}\cdots w_{\ell}]},$$
where $c=w_1\cdots w_{j}$. Write $c=c_1\cdots c_m$ with each $c_i\in \alsbw{Z}$ and $c_i \prec_{\rm lex} c_{i+1}$.
In terms of~\cite[Lemma 2.4]{QC} again,
\begin{align*}
[q\suba{u}]_u=&\ {[[v_1]\cdots [v_k] [[[u] [c_1]]\cdots [c_{m}] [w_{j+1}]]\cdots[w_{\ell}]]},\\
[q\suba{u}]_u=&\ {v_1\cdots v_k [u] w_1\cdots w_{\ell}}+\sum_i \alpha_i q_i\suba{[u]},
\end{align*}
with $q\suba{u}={v_1\cdots v_k u w_1\cdots w_{\ell}} \succ_{\rm lex} q_i\suba{u}$.
By Eq.~(\mref{eq:dlor}), $q\suba{u}={v_1\cdots v_k u w_1\cdots w_{\ell}} \ord q_i\suba{u}$.

\noindent {\bf Case 2.} The $u$ is contained in the operator $\lc\, \rc$, that is, $q\suba{u}=v \blw{p\suba{u}}w$ for some $v,w\in \plie{X}\cup \{\bfone\}$. So $[q\suba{u}]=[v \blw{p\suba{u}}w].$
By the induction hypothesis,
$$[p\suba{u}]=[p'\suba{[uc]}]\,\text{ and }\,[p\suba{u}]_u=p\suba{[u]} + \sum_i \alpha_i p_i\suba{[u]}\,\text{ with }\, p\suba{u}\ord p_i\suba{u}.$$
Consequently,
\begin{align*}
[q\suba{u}]=&\ [v \blw{p\suba{u}}w]=[v \blw{[p'\suba{[uc]}]}w],\\
 [q\suba{u}]_u =&\ [v \blw{[p'\suba{[uc]}]}w]_u = [v \blw{[p'\suba{[uc]}]_u}w] = v \blw{p\suba{[u]} + \sum_i \alpha_i p_i\suba{[u]}}w \\
 =&\ v \blw{p\suba{[u]}}w+\sum_i \alpha_iv \blw{  p_i\suba{[u]}}w =q\suba{[u]}+\sum_i \alpha_i v \blw{  p_i\suba{[u]}}w,\,\text{ with }\, \alpha_i\in\bfk.
\end{align*}
Since $p\suba{u}\ord p_i\suba{u}$, we obtain
$\lbar{[q\suba{u}]_u}=q\suba{u}\ord v \blw{  p_i\suba{[u]}}w$, as required.
\end{proof}

\begin{exam}
Let $Z=\{x,y,z\}$ be a set with $x>y>z$. Let $q=\star z$ and $u=xy$. Then $q\suba{u}=xyz\in\alsbw{Z}$
and so $$[q\suba{u}]=(x(yz))=[xyz]=[\star\suba{uz}]$$
is in $\nlsbw{Z}$.
By Eq.~\meqref{eq:snw},
$$[q\suba{u}]_u=[\star|_{[[xy][z]]}]=[[xy][z]]=[(xy)z]= ((xz)y) + (x(yz))\in\bfk\nlsbw{Z},$$
which can also be expressed in form of Eq.~(\mref{eq:quu}):
$$[q\suba{u}]_u = ((xz)y) + (x(yz)) = ((xy)z)=(xy)z-z(xy)= \star z \suba{[xy]}- z\star\suba{[xy]},$$
with $\star z \suba{xy}  = xyz \ord zxy = z\star\suba{xy}$.
\end{exam}

The above result can be extended from bracketed monomials to bracketed polynomials.

\begin{defn}
Let $q\in\sopm{Z}$ and $f\in \opliez\subseteq \bfk\plie{Z}$ be monic.
We call $$[q\suba{{f}}]_{\lbar{f}}:=[q\suba{\lbar{f}}]_{\lbar{f}}\suba{[\lbar{f}]\mapsto f}$$
a {\bf special normal $f$-word} if $q\suba{\lbar{f}}\in \alsbw{Z}$, where
$[q\suba{\lbar{f}}]_{\lbar{f}}$ is defined by Eq.~\meqref{eq:snw}.
\end{defn}

\begin{corollary}%\mcite{QC,ZGG21}
Let $f\in \opliez\subseteq \bfk\plie{Z}$ be monic and $q\suba{\lbar{f}}\in \alsbw{Z}$. Then
$$[q\suba{f}]_{\lbar{f}}=q\suba{f}+\sum_i \alpha_i q_i\suba{f},$$
where each $\alpha_i\in\bfk$, $q_i\in\sopm{Z}$ and $q\suba{\lbar{f}}\ord q_i\suba{\lbar{f}}$.
\mlabel{lem:jeq}
\end{corollary}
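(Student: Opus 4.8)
The plan is to obtain the Corollary directly from Lemma~\mref{lem:lalsbw} by substituting $[\lbar{f}]\mapsto f$ into Eq.~\meqref{eq:quu}. First I would write $f$ as $f = \lbar{f} + \sum_j \beta_j w_j$ with $w_j\in\alsbw{Z}$, $\beta_j\in\bfk$, and each $w_j\prec_{\ordqd}\lbar{f}$, which is possible since $f$ is monic and $\oplie$ has $\nlsbw{Z}$ as a linear basis by Theorem~\mref{lem:freelie}; more precisely $f = \sum_k c_k [v_k]$ with $[v_k]\in\nlsbw{Z}$, and since $\lbar{[v_k]} = v_k$ by Lemma~\mref{lem:ldt}, the leading monomial of $f$ is the largest $v_k$, matching $\lbar{f}$. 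By definition, $[q\suba{f}]_{\lbar{f}} = [q\suba{\lbar{f}}]_{\lbar{f}}\suba{[\lbar{f}]\mapsto f}$, and applying this substitution to Eq.~\meqref{eq:quu} yields $[q\suba{f}]_{\lbar{f}} = q\suba{f} + \sum_i \alpha_i q_i\suba{f}$ with the stated bound $q\suba{\lbar{f}}\ordd q_i\suba{\lbar{f}}$ on the $q_i$, since those $q_i$ are exactly the ones from the Lemma and the substitution only changes what is plugged into the $\star$, not the shape of $q$ or $q_i$.

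The one point that needs care is checking that the substitution $[\lbar{f}]\mapsto f$ interacts correctly with the leading-monomial statement: after substituting, the term $q\suba{f}$ has leading monomial $q\suba{\lbar{f}}$ (because $\ordqd$ is a monomial order and $\lbar{f}$ is the leading monomial of $f$, so Eq.~\meqref{eq:mono} gives $q\suba{\lbar{f}} \succeq q\suba{w}$ for every other monomial $w$ of $f$), and each $q_i\suba{f}$ has leading monomial $q_i\suba{\lbar{f}}$, which is strictly below $q\suba{\lbar{f}}$ by the Lemma. Hence no cancellation among the $q_i$-terms can promote anything to equal or exceed $q\suba{\lbar{f}}$, and the displayed expansion is the genuine decomposition with $\lbar{[q\suba{f}]_{\lbar{f}}} = q\suba{\lbar{f}}$. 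This is really the same bookkeeping already done in the proof of Lemma~\mref{lem:lalsbw}, now transported along a linear substitution, so it is routine rather than an obstacle.

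The main (and only mild) obstacle I anticipate is making the substitution notation rigorous: $[q\suba{\lbar{f}}]_{\lbar{f}}$ is an element of $\bfk\nlsbw{Z}$ written as a $\bfk$-linear combination of words of the form $[q''\suba{[\lbar{f}]}]$ for various $q''\in\sopm{Z}$ (namely $q$ itself and the $q_i$), and the operation $[\lbar{f}]\mapsto f$ replaces the single occurrence of $[\lbar{f}]$ in each such word by the polynomial $f\in\oplie$, extended $\bfk$-linearly; I would note that this is well-defined because $[\lbar{f}]$ occurs exactly once in each $[q''\suba{[\lbar{f}]}]$ (as $q''\in\sopm{Z}$ is a $\star$-word) and then simply read off the conclusion from Eq.~\meqref{eq:quu}. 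No new estimates or inductions are required beyond what Lemma~\mref{lem:lalsbw} already supplies.
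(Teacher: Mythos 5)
Your proposal is correct and is exactly the argument the paper intends: the corollary is stated without proof as the extension of Lemma~\mref{lem:lalsbw} "from bracketed monomials to bracketed polynomials," i.e.\ one applies Eq.~\meqref{eq:quu} with $u=\lbar{f}$ and performs the linear substitution $[\lbar{f}]\mapsto f$, which is precisely the definition of the special normal $f$-word. Your additional remarks on why $\lbar{q\suba{f}}=q\suba{\lbar{f}}$ (via Lemma~\mref{lem:ldt} and the monomial-order property~\meqref{eq:mono}) are sound bookkeeping consistent with what the paper leaves implicit.
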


\subsection{\rcp for Lie algebras}
\mlabel{ssec: rpclie}
With the preparation of the previous subsection, we can formulate \rcp for Lie algebras in the contexts of rewriting systems and \gsbs.

\subsubsection{Rewriting systems}
We first recall some basic concepts of rewriting systems from~\mcite{BN,GG,ZGGS}.

Let $V$ be a vector space with a linear basis $W$.
\begin{enumerate}
\item For $f=\sum\limits_{w\in W}c_w w \in V$ with $c_w\in \bfk$, the {\bf  support} $\supp(f)$ of $f$ is the set $\{w\in W\,|\,c_w\neq 0\}$. By convention, we take $\supp(0) = \emptyset$.
\item Let $f, g\in V$. We use $f \dps g$ to indicate the property that $\supp(f) \cap \supp(g) = \emptyset$. If this is the case, we say $f \dps g$ is a {\bf  direct sum} of $f$ and $g$ and use $f\dps g$ to denote the sum $f+ g$.

\item For $f \in V$ and $w \in \supp(f)$ with the coefficient $\alpha_w$, write $R_w(f) := \alpha_w w -f \in V$. So $f = \alpha_w w \dps (-R_w(f))$.
\end{enumerate}

\begin{defn}
Let $V$ be a vector space with a linear basis $W$.
\begin{enumerate}
\item  A {\bf  term-rewriting system $\Pi$ on $V$ with respect to $W$} is a binary relation $\Pi \subseteq W \times V$. An element $(t,v)\in \Pi$ is called a (term-) rewriting rule of $\Pi$, denoted by $t\to v$.

\item The term-rewriting system $\Pi$ is called {\bf simple with respect to $W$} if $t \dps v$ for all $t\to v\in \Pi$.

\item If $f = \alpha_t t\dps (-R_t(f))\in V$, using the rewriting rule $t\to v$, we get a new element $g:= c_t v - R_t(f) \in V$, called a {\bf one-step rewriting} of $f$ and denoted $f \to_\Pi g$ or $\tvarrow{f}{g}{\Pi}$.  \label{item:Trule}

\item The reflexive-transitive closure of $\rightarrow_\Pi$ (as a binary relation on $V$) is denoted by $\astarrow_\Pi$ and, if $f \astarrow_\Pi g$, we say {\bf  $f$ rewrites to $g$ with respect to $\Pi$}. \label{item:rtcl}

\item Two elements  $f, g \in V$ are {\bf  joinable} if there exists $h \in V$ such that $f \astarrow_\Pi h$ and $g \astarrow_\Pi h$; we denote this by $f \downarrow_\Pi g$.

\item A {\bf fork}  is a pair of distinct reduction sequences $(f \astarrow_\Pi g_1, f \astarrow_\Pi g_2)$  starting from the same  $f\in V$. The fork is called {\bf joinable} if $g_1 \downarrow_\Pi g_2$.

\item An element $f\in V$ is {\bf a normal form} if no more rules from $\Pi$ can apply.
\end{enumerate}
\label{def:ARSbasics}
\end{defn}

As usual~\mcite{BN}, a term-rewriting system $\Pi$ on $V$ is called
 \begin{enumerate}
 \item {\bf  terminating} if there is no infinite chain of one-step rewritings
 $$f_0 \rightarrow_\Pi f_1 \rightarrow_\Pi f_2 \rightarrow_\Pi \cdots \quad.$$
\item {\bf  confluent} if every fork  is joinable.
\item {\bf  convergent} if it is both terminating and confluent.
\end{enumerate}

A family of OLPIs give rise to a rewriting system. Let $\Phi \subseteq \oplie\subseteq \bfk\plie{X}$ be monic.
For each $s\in S_\Phi:=\{\phi(u_1,\ldots,u_k)\,|\,\phi\in \Phi,  u_1,\ldots,u_k\in\bfk\frakS(Z)\}$ and $q\in \sopm{Z}$,
if $q\suba{\lbar{s}}\in \alsbw{Z}$, then we may write
$$[q\suba{s}]_{\lbar{s}}:=[q\suba{\lbar{s}}]\dps \big(-R([q\suba{s}]_{\lbar{s}})\big)\in\bfk\nlsbw{Z},$$
which under the order $\ordq$ on $\frakS(Z)$ can be viewed as a rewriting rule
$$[q\suba{\lbar{s}}]\to R([q\suba{s}]_{\lbar{s}}).$$

Let us give an example to illustrate these notions and the induced rewriting rules.

\begin{exam}
Let $\phi(x,y)= [\bws{x}\bws{y}] - \bws{[\bws{x}y]}-\bws{[x\bws{y}]}\in\opliex$. Let $Z$ be a well-ordered set and
$$s=\phi([u], [v]) =[\bws{[u]}\bws{[v]}]-\bws{[\bws{[u]}[v]]}-\bws{[{[u]}\bws{[v]}]}$$
with $u\ord v \in\alsbw{Z}$. For $q_1=\star$, $q_2=\star\bws{w}$ and $ {v}\ord {w}\in\alsbw{Z}$, we have $\lbar{s}=\bws{u}\bws{v}$ and

$$[q_1\suba{s}]_{\lbar{s}}=[q_1\suba{\lbar{s}}]_{\lbar{s}}\suba{[\lbar{s}]\mapsto s}=s=[\bws{[u]}\bws{[v]}] \dps\Big(-\bws{[\bws{[u]}[v]]}-\bws{[{[u]}\bws{[v]}]}\Big).$$
It induces a rewriting rule
$$\big(\bws{[u]}\bws{[v]})\rightarrow \bws{[\bws{[u]}[v]]}+\bws{[{[u]}\bws{[v]}]}\in\nlsbw{Z}\times \opliez.$$
Similarly,
\begin{eqnarray*}
[q_2\suba{s}]_{\lbar{s}}
&=&[q_2\suba{\lbar{s}}]_{\lbar{s}}\suba{[\lbar{s}]\mapsto s}\\
&=&[\star\bws{w}\suba{\bws{u}\bws{v}}]_{\bws{u}\bws{v}}\suba{[{\bws{u}\bws{v}}]\mapsto s}\\
&=&[[\bws{u}\bws{v}][\bws{w}]]\suba{[{\bws{u}\bws{v}}]\mapsto s}\\
&=&\big((\bws{[u]}\bws{[v]})\bws{[w]}\big)\suba{(\bws{[u]}\bws{[v]})\mapsto s}\\
&=&\big(s\bws{[w]}\big)\\
&=&\big([\bws{[u]}\bws{[v]}]\bws{[w]}\big)-   \big(\bws{[\bws{[u]}[v]]}\bws{[w]}\big) - \big(\bws{[{[u]}\bws{[v]}]}\bws{[w]}\big)\\
&=&\big(\bws{[u]}[\bws{[v]}\bws{[w]}]\big)\dps \big([\bws{[u]}\bws{[w]}]\bws{[v]}\big)-   \big(\bws{[\bws{[u]}[v]]}\bws{[w]}\big) - \big(\bws{[{[u]}\bws{[v]}]}\bws{[w]}\big).
\end{eqnarray*}
It induces a rewriting rule
$$\big(\bws{[u]}[\bws{[v]}\bws{[w]}]\big)\rightarrow- \big([\bws{[u]}\bws{[w]}]\bws{[v]}\big)+   \big(\bws{[\bws{[u]}[v]]}\bws{[w]}\big) + \big(\bws{[{[u]}\bws{[v]}]}\bws{[w]}\big)$$
in $\nlsbw{Z}\times \opliez$.
\end{exam}
Next let $Z$ be a well-ordered set. Let $S$ be a monic subset of $\opliez$.   With an invariant monomial order $\ordq$ on $\plie{Z}$, we define a term-rewriting system
\begin{equation}
\Pil{S}:=\bigg\{[q\suba{\lbar{s}}]\rightarrow R([q\suba{s}]_{\lbar{s}})\,\bigg|\,s\in S, q\in \sopm{Z},q\suba{\lbar{s}}\in \alsbw{Z}\bigg\}
\subseteq \nlsbw{Z}\times \opliez.
\mlabel{eq:ltrs}
\end{equation}

\begin{defn}
Let $\Phi\subseteq\oplie\subseteq \bfk\plie{X}$ be a system of monic OLPIs. Let $Z$ be a well-ordered set.
We call $\Phi$ {\bf convergent} if $\Pil{S_\Phi}$ is convergent, where
$$S_\Phi=\{\phi(u_1,\ldots,u_k)\,|\,u_1,\ldots,u_k\in \opliez, \phi\in \Phi\}\subseteq\opliez.$$
\end{defn}

Now we formulate \rcp for Lie algebras in terms of rewriting systems.

\begin{problem}
{\em (\rcp for Lie algebras via rewriting systems)} Determine all convergent systems of OLPIs.
\mlabel{prob:rpcltrs}
\end{problem}

\subsubsection{\gsbs}
We will fix an invariant monomial order $\ordq$ in this subsection. In particular, a \gsb in $\opliez$ is taken with respect to this order. There are two kinds of nontrivial compositions.

\begin{defn}
Let $Z$ be a well-ordered set. Let $f, g \in\opliez\subseteq \bfk\plie{Z}$ be monic.
\begin{enumerate}
\item  If there are $u, v, w\in \plie{Z}$ such that $w = \lbar{f}u = v\lbar{g}$ with
$\max\{ |\lbar{f}|, |\lbar{g}|\}< |w| < |\lbar{f}| + |\lbar{g}|$, then
$$\langle f,g \rangle_w:=\langle f,g \rangle^{u,v}_w:= [fu]_{\lbar{f}} - [vg]_{\lbar{g}}$$
is called the {\bf intersection composition of $f$ and $g$ with respect to $w$}.
\mlabel{item:intcompl}
\item  If there is $q\in\sopm{Z}$ such that $w = \lbar{f} = q\suba{\lbar{ g}},$ then
$$\langle f,g \rangle_w:=\langle f,g \rangle^q_w := f - [q\suba{g}]_{\lbar{g}}$$
is called the {\bf including composition of $f$ and $g$ with respect to $w$}.
\mlabel{item:inccompl}
\end{enumerate}
\mlabel{defn:compl}
\end{defn}
Here comes the main notion on \gsbs.
\begin{defn}
Let $Z$ be a well-ordered set. Let $S\subseteq\opliez\subseteq \bfk\plie{Z}$ be monic.
\begin{enumerate}
\item An element $f\in\opliez$ is called {\bf  trivial modulo $(S, w)$} if
$$f =\sum_i \alpha_i [q_i\suba{s_i}]_{\lbar{s_i}}$$
with $w\ord q_i\suba{\lbar{s_i}}$ for each special normal $s_i$-word $[q_i\suba{s_i}]_{\lbar{s_i}}$, where each $ \alpha_i\in\bfk$, $q_i\in\sopm{Z}$, $s_i\in S$.
In this case, we write $f\equiv 0 \mod(S, w).$
\item We call $S$ a {\bf \gsb} in $\opliez$ with respect to $\ordq$ if, for all pairs $f, g \in S$, every intersection composition of the form $\langle f,g\rangle^{u,v}_w$ is trivial modulo $(S, w)$, and every including composition of the form $\langle f, g\rangle ^q_w$ is trivial modulo $(S, w)$.
\end{enumerate}
\mlabel{def:gsbl}
\end{defn}

Expanding the approach in~\mcite{BCQ}, we establish the Composition-Diamond lemma for operated Lie algebras with respect to an invariant monomial order $\ordq$ on $\plie{Z}$.

\begin{lemma}
Let $\ordq$ be an invariant monomial order  on $\plie{Z}$ and $f,g\in \opliez\subseteq\bfk \plie{Z}$ monic.
Then
$$\langle f, g\rangle_w-(f, g)_w\equiv_{\rm ass} 0 \mod(\{f, g\}, w),$$
where $(f, g)_w$ and $\equiv_{\rm ass}0$ are  the composition of $f$ and $g$, and the triviality modulo $(\{f, g\},w)$ in associative framework.
\mlabel{lem:aslie}
\end{lemma}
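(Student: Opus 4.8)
The plan is to exploit the close relationship between the operated Lie setting and the operated associative setting, reducing the Lie composition $\langle f,g\rangle_w$ (which lives in $\opliez\subseteq\bfk\plie{Z}$) to the associative composition $(f,g)_w$ by identifying precisely which extra terms appear. I would treat the two kinds of compositions separately, and in each case begin by recalling the definitions: in the associative framework the composition $(f,g)_w$ uses ordinary substitution $q\suba{f}$ into $\star$-bracketed words, whereas in the Lie framework $\langle f,g\rangle_w$ uses the special normal words $[q\suba{f}]_{\lbar{f}}$ from Eq.~\meqref{eq:quu}. The engine of the argument is therefore Corollary~\mref{lem:jeq}, which says
$$[q\suba{f}]_{\lbar{f}}=q\suba{f}+\sum_i \alpha_i q_i\suba{f}$$
with $q\suba{\lbar{f}}\ord q_i\suba{\lbar{f}}$, i.e.\ the Lie special normal $f$-word equals the naive associative substitution $q\suba{f}$ plus a linear combination of associative substitutions supported on strictly smaller leading words.

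For the including composition, write $w=\lbar{f}=q\suba{\lbar{g}}$. Then
$$\langle f,g\rangle_w = f-[q\suba{g}]_{\lbar{g}} = \Big(f-q\suba{g}\Big) - \sum_i\alpha_i\, q_i\suba{g} = (f,g)_w - \sum_i \alpha_i\, q_i\suba{g},$$
using the Corollary in the second equality. Each $q_i\suba{g}$ has $\lbar{q_i\suba{g}}=q_i\suba{\lbar{g}}\ord q\suba{\lbar{g}}=w$ (since $\ordq$ is a monomial order and hence respects substitution), so $q_i\suba{g}$ is trivial modulo $(\{f,g\},w)$ in the associative sense—indeed it is a single term $\alpha_i q_i\suba{g}$ with $w\ord q_i\suba{\lbar{g}}$. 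Hence $\langle f,g\rangle_w-(f,g)_w = -\sum_i\alpha_i q_i\suba{g}\equiv_{\rm ass}0\mod(\{f,g\},w)$, as required. For the intersection composition, with $w=\lbar{f}u=v\lbar{g}$, I would apply the Corollary to both $[fu]_{\lbar{f}}$ and $[vg]_{\lbar{g}}$ (taking $q=\star u$ and $q=v\star$ respectively), obtaining
$$\langle f,g\rangle_w = \big(fu - vg\big) + \sum_i\beta_i\, q_i\suba{f} - \sum_j\gamma_j\, q_j'\suba{g} = (f,g)_w + \sum_i\beta_i\, q_i\suba{f} - \sum_j\gamma_j\, q_j'\suba{g},$$
and again each correction term has leading word strictly $\ordq$-below $w$, so each is trivial modulo $(\{f,g\},w)$ in the associative framework, giving the congruence.

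I expect the main obstacle to be bookkeeping rather than conceptual: one must make sure that the $\star$-bracketed words $q_i$ produced by Corollary~\mref{lem:jeq} really do give \emph{associative} triviality witnesses, i.e.\ that $q_i\suba{\lbar{f}}$ is a genuine bracketed word (not requiring it to lie in $\alsbw{Z}$ for the \emph{associative} notion of triviality, which is less restrictive), and that the strict inequality $q\suba{\lbar{f}}\ord q_i\suba{\lbar{f}}$ from Eq.~\meqref{eq:quu} translates correctly into the condition $w\ord q_i\suba{\lbar{f}}$ appearing in the associative definition of "trivial modulo $(S,w)$". A secondary subtlety is that $(f,g)_w$ is defined using the leading \emph{associative} words of $f$ and $g$, which by Lemma~\mref{lem:ldt} coincide with $\lbar{f}$ and $\lbar{g}$ computed in $\bfk\plie{Z}$; I would invoke Lemma~\mref{lem:ldt} explicitly to align the two notions of leading word before comparing the compositions. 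Once these identifications are in place, the displayed computations above close the proof.
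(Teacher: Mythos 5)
Your proposal is correct and follows essentially the same route as the paper: split into the intersection and including cases, apply Corollary~\mref{lem:jeq} to replace each special normal word $[q\suba{f}]_{\lbar{f}}$ by $q\suba{f}$ plus correction terms $q_i\suba{f}$ with $w\ord q_i\suba{\lbar{f}}$, and observe that the difference $\langle f,g\rangle_w-(f,g)_w$ is exactly that sum of correction terms, hence associatively trivial modulo $(\{f,g\},w)$. (Only note the one reversed inequality in your including-composition paragraph, ``$q_i\suba{\lbar{g}}\ord q\suba{\lbar{g}}$'', which should read $q\suba{\lbar{g}}\ord q_i\suba{\lbar{g}}$ as you correctly state immediately afterwards.)
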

\begin{proof}
There are two cases to consider.

\noindent {\bf Case 1.} $w = \lbar{f}u = v\lbar{g}$ for some $u, v\in \plie{Z}$. Then
$$\langle f, g\rangle_w=[fu]_{\lbar{f}} - [vg]_{\lbar{g}}\,\text{ and }\,
(f, g)_w=fu-vg.$$
By Corollary~\mref{lem:jeq},
$$[fu]_{\lbar{f}} - [vg]_{\lbar{g}}=fu+\sum_i \alpha_i p_i\suba{f}-vg-\sum_j \beta_j q_j\suba{g}$$
 for some $\alpha_i, \beta_j\in\bfk$
and $p_i, q_j\in\sopm{Z}$ with $w\ord p_i\suba{\lbar{f}}$ and $w\ord q_j\suba{\lbar{g}}$.
Therefore
$$\langle f, g\rangle_w-(f, g)_w=\sum_i \alpha_i p_i\suba{f}-\sum_j \beta_j q_j\suba{g}\equiv_{\rm ass} 0 \mod(\{f, g\}, w).$$

\noindent {\bf Case 2.} $w = \lbar{f} = q\suba{\lbar{ g}}$  for $q\in\sopm{Z}$. Then
$$\langle f,g \rangle_w= f - [q\suba{g}]_{\lbar{g}}=f-q\suba{g}-\sum_i \alpha_i q_i\suba{g}$$
for some $\alpha_i\in\bfk$ and $q_i\in\sopm{Z}$ with $q\suba{\lbar{g}}\ord q_i\suba{\lbar{g}}$.
Further by $(f, g)_w=f-q\suba{g}$,
$$\langle f, g\rangle_w-(f, g)_w=-\sum_i \alpha_i q_i\suba{g}\equiv_{\rm ass} 0 \mod(\{f, g\}, w).$$
This completes the proof.
\end{proof}

\begin{lemma}
Let $\ordq$ be an invariant monomial order  on $\plie{Z}$ and $S\subseteq\opliez\subseteq\bfk \plie{Z}$ monic. Then $S$ is a \gsb in $\opliez$ if and only if $S$ is a \gsb in $\bfk\plie{Z}$.
 \mlabel{lem:gseq}
\end{lemma}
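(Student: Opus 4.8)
The plan is to deduce the equivalence from the Composition-Diamond lemma in both frameworks, using Lemma~\mref{lem:aslie} as the bridge. Recall that, by construction, a monic subset $S\subseteq\opliez$ sits inside $\bfk\plie{Z}$, and the leading bracketed word $\lbar{s}$ of any $s\in S$ (computed via Lemma~\mref{lem:ldt}) is the same whether $s$ is regarded as an operated Lie polynomial or as an element of the free operated associative algebra $\bfk\plie{Z}$; in particular the invariant monomial order $\ordq$ is used identically on both sides. Hence the compositions that need to be checked are indexed by exactly the same data: a pair $f,g\in S$, a word $w\in\plie{Z}$, and either an overlap decomposition $w=\lbar{f}u=v\lbar{g}$ with $\max\{|\lbar{f}|,|\lbar{g}|\}<|w|<|\lbar{f}|+|\lbar{g}|$ (intersection case) or a $q\in\sopm{Z}$ with $w=\lbar{f}=q\suba{\lbar{g}}$ (including case). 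The difference between the two situations is only which composition — $\langle f,g\rangle_w$ (Lie) versus $(f,g)_w$ (associative) — one forms, and which notion of triviality modulo $(S,w)$ one uses.

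First I would make precise that ``trivial modulo $(S,w)$ in the associative sense'' means an expression $\sum_i\alpha_i q_i\suba{s_i}$ with $q_i\suba{\lbar{s_i}}\ord w$ (no Shirshov bracketing applied), whereas ``trivial modulo $(S,w)$ in the operated Lie sense'' means an expression $\sum_i\alpha_i[q_i\suba{s_i}]_{\lbar{s_i}}$ with $q_i\suba{\lbar{s_i}}\ord w$. The key observation, which is exactly Corollary~\mref{lem:jeq}, is that for each special normal $s_i$-word one has $[q_i\suba{s_i}]_{\lbar{s_i}}=q_i\suba{s_i}+\sum_j\beta_{ij}q_{ij}\suba{s_i}$ with $q_i\suba{\lbar{s_i}}\ord q_{ij}\suba{\lbar{s_i}}$. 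Consequently the $\bfk$-span of the special normal $s$-words that are $\ord w$ coincides with the $\bfk$-span of the associative normal $s$-words that are $\ord w$: the two spanning families are related by a unitriangular change of basis with respect to the order $\ordq$. Therefore $f\equiv 0\mod(S,w)$ in the operated Lie sense if and only if $f\equiv_{\rm ass}0\mod(S,w)$. I would state and prove this equivalence of the two triviality notions as the first substantive step.

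Next, for a fixed pair $f,g\in S$ and a fixed $w$, Lemma~\mref{lem:aslie} gives $\langle f,g\rangle_w-(f,g)_w\equiv_{\rm ass}0\mod(\{f,g\},w)$, hence a fortiori $\equiv_{\rm ass}0\mod(S,w)$. Combining this with the triviality-equivalence from the previous step: if $S$ is a \gsb in $\bfk\plie{Z}$, then $(f,g)_w\equiv_{\rm ass}0\mod(S,w)$, so $\langle f,g\rangle_w=(f,g)_w+\big(\langle f,g\rangle_w-(f,g)_w\big)\equiv_{\rm ass}0\mod(S,w)$, and translating back gives $\langle f,g\rangle_w\equiv0\mod(S,w)$ in the operated Lie sense; since this holds for every composition, $S$ is a \gsb in $\opliez$. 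The converse direction is symmetric: from triviality of every $\langle f,g\rangle_w$ one recovers triviality of every $(f,g)_w$. This gives the biconditional.

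The main obstacle is bookkeeping rather than conceptual: one must verify carefully that the index set of compositions (the admissible triples $(f,g,w)$ together with their overlap data) is genuinely identical in the two frameworks — in particular that $\lbar{s}$ as a bracketed word and the breadth/subword combinatorics used in Definition~\mref{defn:compl} do not depend on whether we are in the Lie or the associative setting — and that the error term produced by Lemma~\mref{lem:aslie}, which is expressed via associative normal $s$-words $q_i\suba{s}$ rather than special normal $s$-words, is absorbed correctly when one passes between the two triviality notions. Once the unitriangularity in Corollary~\mref{lem:jeq} is invoked to identify the two triviality notions, the rest is a one-line manipulation; so I expect the proof to be short, essentially: ``this follows from Lemma~\mref{lem:aslie} and Corollary~\mref{lem:jeq}.''
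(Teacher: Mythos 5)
Your overall architecture (same index set of compositions, Lemma~\mref{lem:aslie} as the bridge, plus an equivalence of the two triviality notions) matches the paper's, and the direction ``$S$ a \gsb in $\opliez$ $\Rightarrow$ $S$ a \gsb in $\bfk\plie{Z}$'' goes through essentially as you describe, since Corollary~\mref{lem:jeq} immediately converts a Lie-trivial expression into an associative-trivial one. The gap is in the other direction, which you dispose of with the claim that the $\bfk$-span of the special normal $s$-words with leading word below $w$ \emph{coincides} with the $\bfk$-span of the associative normal $s$-words with leading word below $w$, via a ``unitriangular change of basis''. That claim is false: every special normal word $[q\suba{s}]_{\lbar{s}}$ lies in the Lie subalgebra $\opliez$, whereas a generic associative normal word $q\suba{s}$ does not, so the two spans cannot be equal. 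Moreover the unitriangular relation of Corollary~\mref{lem:jeq} cannot be inverted term by term: the correction terms $q_i\suba{s}$ appearing there need not satisfy $q_i\suba{\lbar{s}}\in\alsbw{Z}$, so there is in general no special normal word $[q_i\suba{s}]_{\lbar{s}}$ to trade them for, and the recursion you would need in order to express $q\suba{s}$ through special normal words breaks down.

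What is actually true, and what the paper proves, is the weaker statement that for an element that already lies in $\opliez$ (such as $\langle f,g\rangle_w$), associative triviality modulo $(S,w)$ implies Lie triviality modulo $(S,w)$ --- but establishing this requires more than Corollary~\mref{lem:jeq}. The paper first upgrades $\langle f,g\rangle_w\equiv_{\rm ass}0$ to membership in $\Id(S)$ and invokes the associative Composition-Diamond lemma of~\mcite{BCQ} to obtain a representation $\sum_i\alpha_i q_i\suba{s_i}$ whose \emph{maximal} leading word equals $\lbar{\langle f,g\rangle_w}$; since $\langle f,g\rangle_w\in\opliez$, that leading word is an associative Lyndon-Shirshov bracketed word, so the special normal word $[q_1\suba{s_1}]_{\lbar{s_1}}$ is defined and can be subtracted, strictly decreasing the leading word while remaining inside $\opliez$ and inside $\Id(S)$. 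An induction on the well-ordered leading word then produces the required Lie-trivial expression. You need to supply this leading-term induction (or an equivalent argument); as written, the converse direction of your proof is not justified.
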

\begin{proof}
By Definition~\mref{defn:compl} and Lemma~\mref{lem:ldt}, for any $f,g \in S$, they have compositions in $\opliez$ if and only if so do in $\bfk\plie{Z}$.

Assume that $S$ is a \gsb in $\opliez$.
By Definition~\mref{def:gsbl},
for any composition $\langle f, g\rangle_w$ from $S$, we obtain
$$\langle f, g\rangle_w=\sum_i \alpha_i [q_i\suba{s_i}]_{\lbar{s_i}}\,\text{ for }\,
\alpha_i\in\bfk, s_i\in S, q_i\in\sopm{Z}$$
with $w\ord q_i\suba{\lbar{s_i}}.$
Further by Corollary~\mref{lem:jeq},
\begin{align*}
\langle f, g\rangle_w=&\ \sum_i \alpha_i [q_i\suba{s_i}]_{\lbar{s_i}}
=\sum_i \alpha_i \Big(q_{i}\suba{s_i}+\sum_j \beta_{ij} q_{ij}\suba{s_{ij}}\Big)
\,\text{ for }\,\beta_{ij}\in\bfk, s_{ij}\in S, q_{ij}\in\sopm{Z},\\
[q\suba{f}]_{\lbar{f}}=&\ q\suba{f}+\sum_i \alpha_i q_i\suba{f},\,\text{ where }\,
\alpha_i\in\bfk, q_i\in\sopm{Z}, q\suba{\lbar{f}}\ord q_i\suba{\lbar{f}}.
\end{align*}
Using Lemma~\mref{lem:aslie}, we have
$$\langle f, g\rangle_w-(f, g)_w=\sum_i \alpha_i \Big(q_{i}\suba{s_i}+\sum_j \beta_{ij} q_{ij}\suba{s_{ij}}\Big)-(f, g)_w\equiv_{\rm ass} 0 \mod(S, w).$$
Hence $(f, g)_w\equiv_{\rm ass} 0 \mod(S, w)$ and so $S$ is a \gsb in $\bfk\plie{Z}$.

Conversely, supposed that $S$ is a \gsb in $\bfk\plie{Z}$.
For any composition $( f, g)_w$ from $S$, we have $(f, g)_w\equiv_{\rm ass} 0 \mod(S, w)$.
Again by Lemma~\mref{lem:aslie},
$$\langle f, g\rangle_w\equiv_{\rm ass} 0 \mod(S, w),$$
and so $\langle f, g\rangle_w\in \Id(S)$. Further by Composition-Diamond lemma for operated algebras in~\mcite{BCQ}, write
$$\langle f, g\rangle_w=\sum_{i=1}^n \alpha_iq_i\suba{s_i}\,\text{ for }\,\alpha_i\in\bfk, s_i\in S, q_i\in\sopm{Z}$$
with $w\ord q_1\suba{\lbar{s_i}}\ord\cdots\ord q_n\suba{\lbar{s_n}}.$
Since $\langle f, g\rangle_w\in\opliez,$
$\lbar{\langle f, g\rangle_w}=q_1\suba{\lbar{s_i}}\in \alsbw{Z}.$
Let
$$\langle f, g\rangle_w^0:=\langle f, g\rangle_w\,\text{ and }\,\langle f, g\rangle_w^1:=\langle f, g\rangle_w-\alpha_1[q_i\suba{s_i}]_{\lbar{s_1}}\in\opliez.$$
Then $\lbar{\langle f, g\rangle_w}=q_1\suba{\lbar{s_i}}\ord\lbar{\langle f, g\rangle_w^1}$.
By Corollary~\mref{lem:jeq} and $\langle f, g\rangle_w\equiv_{\rm ass} 0 \mod(S, w),$ we have
$$\langle f, g\rangle_w^1\equiv_{\rm ass} 0 \mod(S, w).$$
Using induction on $\langle f, g\rangle_w^n$,  we have
$$\langle f, g\rangle_w^n=\langle f, g\rangle_w^{n-1}-\beta_{n-1}[p_{n-1}\suba{s_{n-1}}]_{\lbar{{s_{n-1}}}}.$$
Therefore
$$\langle f, g\rangle_w=\sum_i\beta_{i}[p_{i}\suba{s_{i}}]_{\lbar{{s_{i}}}},$$
with $w\ord p_{i}\suba{\lbar{s_{i}}}.$ This proves that $S$ is a \gsb in $\opliez$.
\end{proof}

Denote by $\Idl(S)$  the Lie ideal of $\opliez$ generated by $S\subseteq\opliez.$ Then
\begin{equation}
\begin{split}
\Idl(S)=~\bigg\{\sum_{i=1}^n \alpha_i[{q_i} \suba{s_i}]_{\lbar{s_i}}\,\bigg|\, n\geq1, \alpha_i\in\bfk, s_i\in S\, \text{ and }\, q_i\in\sopm{Z} \bigg\}.
\end{split}
\mlabel{eq:ideal}
\end{equation}
Define
$$\irrl(S):=\big\{[w]\,\big|\, w\in\alsbw{Z}, w\neq q\suba{\lbar{s}}\,\text{ for }\, s\in S\,\text{ and }\, q\in\sopm{Z}\big\}.$$

\begin{lemma}
Let $\ordq$ be an invariant monomial order  on $\plie{Z}$ and $S\subseteq\opliez\subseteq \bfk\plie{Z}$ monic. Then, for any $f\in\opliez$, $f$ has a representation:
$$f=\sum_i \alpha_i[u_i]+\sum_j \beta_j [q_i\suba{s_i}]_{\lbar{s_i}}$$
where each $\alpha_i, \beta_j\in\bfk$, $s_i\in S$, $[u_i]\in \irrl(S)$, $\lbar{f}\ordq u_i$ and $\lbar{f}\ordq q_i\suba{\lbar{s_i}}$.
\mlabel{lem:rep}
\end{lemma}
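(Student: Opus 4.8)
The plan is to prove this by Noetherian induction on the leading monomial $\lbar{f}$ with respect to the well-order $\ordq$. Given $f \in \opliez$, consider $\lbar{f} \in \alsbw{Z}$, and distinguish two cases according to whether $\lbar{f}$ contains a subword of the form $q\suba{\lbar{s}}$ for some $s \in S$ and $q \in \sopm{Z}$.

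If $\lbar{f}$ does not contain such a subword, then by Lemma~\mref{lem:nassb} and Theorem~\mref{lem:freelie} we may write $f = \sum_i \alpha_i [u_i]$ with $[u_i] \in \nlsbw{Z}$; using Lemma~\mref{lem:ldt} (which tells us $\lbar{[u]} = u$) and the leading-term analysis, the term $[u_1]$ with $u_1 = \lbar{f}$ lies in $\irrl(S)$ since $\lbar{f}$ avoids all $q\suba{\lbar{s}}$, and we can peel it off. Setting $f_1 := f - \alpha_1 [u_1]$, we have $\lbar{f} \ordd \lbar{f_1}$, so the induction hypothesis applies to $f_1$, and combining the representations gives the desired form, with the monomial-order inequalities $\lbar{f} \ordqd u_i$ and $\lbar{f}\ordqd q_i\suba{\lbar{s_i}}$ inherited from $\lbar{f} \ordd \lbar{f_1}$.

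If instead $\lbar{f} = q\suba{\lbar{s}}$ for some $s \in S$ and $q \in \sopm{Z}$, then $q\suba{\lbar{s}} \in \alsbw{Z}$, so the special normal $s$-word $[q\suba{s}]_{\lbar{s}}$ is defined, and by Corollary~\mref{lem:jeq} we have $[q\suba{s}]_{\lbar{s}} = q\suba{s} + \sum_j \alpha_j q_j\suba{s}$ with $q\suba{\lbar{s}} \ordd q_j\suba{\lbar{s}}$. Since $s$ is monic, $\lbar{[q\suba{s}]_{\lbar{s}}} = q\suba{\lbar{s}} = \lbar{f}$, so after rescaling by the leading coefficient $\beta$ of $f$ at $\lbar{f}$ (which is $1$ if $f$ is monic, but in general we just track it) the difference $f_1 := f - \beta\, [q\suba{s}]_{\lbar{s}}$ has strictly smaller leading monomial, $\lbar{f} \ordd \lbar{f_1}$. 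Apply the induction hypothesis to $f_1$ to write it in the required form, then add back $\beta\,[q\suba{s}]_{\lbar{s}}$ as one of the $[q_i\suba{s_i}]_{\lbar{s_i}}$ terms; all the order constraints are again preserved because everything that appeared came from monomials $\ordqd \lbar{f}$.

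The base case of the Noetherian induction is $f = 0$, which is the empty sum. The main obstacle, or at least the point requiring care, is verifying that at each step the leading monomial strictly decreases and that the order inequalities $\lbar{f} \ordqd u_i$, $\lbar{f} \ordqd q_i\suba{\lbar{s_i}}$ survive when we splice together the representation of $f_1$ with the peeled-off term; this uses that $\ordqd$ is a monomial order together with the explicit leading-term identities from Lemma~\mref{lem:ldt} and Corollary~\mref{lem:jeq}, and that $\ordqd$ is a well-order so the induction is legitimate. One should also note that in the second case the subword $\lbar{s}$ need not be a prime subword in a naive sense, but Corollary~\mref{lem:jeq} has already been set up precisely to handle the bracketing via special normal words, so no extra work is needed there.
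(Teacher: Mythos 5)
Your proposal is correct and follows essentially the same argument as the paper: expand $f$ in the basis $\nlsbw{Z}$, split according to whether the leading term $[\lbar{f}]$ lies in $\irrl(S)$ or is of the form $[q\suba{\lbar{s}}]$, subtract either $\alpha_1[u_1]$ or the corresponding (scaled) special normal $s$-word, and conclude by Noetherian induction on the strictly decreasing leading monomial. Your version is in fact slightly more careful than the paper's, which omits the leading coefficient in the reducible case and leaves the appeal to Corollary~\ref{lem:jeq} for $\lbar{[q\suba{s}]_{\lbar{s}}}=q\suba{\lbar{s}}$ implicit.
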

\begin{proof}
Since $f\in\opliez$, we can write $f=\sum_i \alpha_i[u_i]$,
where each $[u_i]\in\nlsbw{Z}$, $\alpha_i\in\bfk\setminus\{0\}$ and
$u_1\ord u_2\ord \cdots$. If $[u_1]\in\irrl(S)$, then let
$f_1:=f-\alpha_1[u_1]$. If $[u_1]\notin\irrl(S)$, then there exists $s\in S$ and $q\in\sopm{Z}$ such that
$u_1=q\suba{s}$. In this case, let
$$f_1:=f-[q\suba{s}]_{\lbar{s}}.$$
Hence, in both cases, we have $\lbar{f}\ord \lbar{f_1}.$
Then the result follows from the induction on $\lbar{f}$.
\end{proof}

Now, we prove that the following Composition-Diamond Lemma for \olies.

\begin{lemma}{\em (Composition-Diamond lemma for \olies)} Let $Z$ be a well-ordered set and $\ordq$ an invariant monomial order on $\plie{Z}$. Let $S\subseteq\opliez\subseteq \bfk\plie{Z}$ be monic. The following conditions are equivalent.
\begin{enumerate}
\item $S$ is a  \gsb in $\opliez$. \mlabel{item:cda}
\item  For all $f \neq 0$ in $\Idl ( S )$ , $\lbar{f} = q\suba{\lbar{s}}\in\alsbw{Z}$ for some $q\in\sopm{Z}$ and $s \in $S. \mlabel{item:cdb}
\item  $\opliez=\bfk \irrl( S )\oplus\Idl(S)$ and $\irrl( S )$ is a \bfk-basis of $\opliez/ \Idl(S)$. \mlabel{item:cdc}
\end{enumerate}
\mlabel{lem:cdmol}
\end{lemma}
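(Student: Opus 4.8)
The plan is to reduce the Composition–Diamond lemma for operated Lie algebras to the already-established Composition–Diamond lemma for operated (associative) algebras of~\mcite{BCQ}, using the bridging results Lemma~\mref{lem:gseq}, Lemma~\mref{lem:aslie}, Corollary~\mref{lem:jeq} and Lemma~\mref{lem:rep}. I would prove the cycle of implications \mref{item:cda}$\Rightarrow$\mref{item:cdb}$\Rightarrow$\mref{item:cdc}$\Rightarrow$\mref{item:cda}.

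First, \mref{item:cda}$\Rightarrow$\mref{item:cdb}. Suppose $S$ is a \gsb in $\opliez$. By Lemma~\mref{lem:gseq}, $S$ is then a \gsb in $\bfk\plie{Z}$. Take $0\neq f\in\Idl(S)$. Since $\Idl(S)\subseteq\Id(S)$ (the operated associative ideal generated by $S$), the associative Composition–Diamond lemma of~\mcite{BCQ} applies to give $\lbar{f}=q\suba{\lbar{s}}$ for some $q\in\sopm{Z}$ and $s\in S$; moreover $\lbar{f}\in\alsbw{Z}$ because $f\in\opliez$ and, by Lemma~\mref{lem:ldt}, leading monomials of operated Lie polynomials are associative Lyndon–Shirshov bracketed words. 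This is the easy direction.

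Next, \mref{item:cdb}$\Rightarrow$\mref{item:cdc}. Assume \mref{item:cdb}. By Lemma~\mref{lem:rep}, every $f\in\opliez$ can be written as $f=\sum_i\alpha_i[u_i]+\sum_j\beta_j[q_j\suba{s_j}]_{\lbar{s_j}}$ with $[u_i]\in\irrl(S)$ and $[q_j\suba{s_j}]_{\lbar{s_j}}\in\Idl(S)$ by Eq.~\meqref{eq:ideal}; hence $\opliez=\bfk\irrl(S)+\Idl(S)$. For directness of the sum, suppose $0\neq g=\sum_i\alpha_i[u_i]\in\bfk\irrl(S)\cap\Idl(S)$ with all $\alpha_i\neq0$ and the $[u_i]$ distinct; by Lemma~\mref{lem:ldt} the leading monomial $\lbar{g}$ is one of the $u_i$, so $\lbar{g}\in\alsbw{Z}$ is the associative word of an element of $\irrl(S)$, i.e.\ $\lbar{g}\neq q\suba{\lbar{s}}$ for any $s\in S$, $q\in\sopm{Z}$ — contradicting \mref{item:cdb}. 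Therefore $\opliez=\bfk\irrl(S)\oplus\Idl(S)$ and $\irrl(S)$ projects to a basis of $\opliez/\Idl(S)$.

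Finally, \mref{item:cdc}$\Rightarrow$\mref{item:cda}. Let $f,g\in S$ and let $\langle f,g\rangle_w$ be an intersection or including composition. Since $\langle f,g\rangle_w\in\Idl(S)$ and, by Lemma~\mref{lem:ldt}, its leading monomial lies in $\alsbw{Z}$, the projection of $\langle f,g\rangle_w$ to $\opliez/\Idl(S)$ is zero; combined with the decomposition $\opliez=\bfk\irrl(S)\oplus\Idl(S)$, applying Lemma~\mref{lem:rep} to $\langle f,g\rangle_w$ forces the $\bfk\irrl(S)$-component to vanish, leaving $\langle f,g\rangle_w=\sum_j\beta_j[q_j\suba{s_j}]_{\lbar{s_j}}$ with $s_j\in S$ and $q_j\in\sopm{Z}$. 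It remains to check the degree condition $w\ord q_j\suba{\lbar{s_j}}$: one runs the standard ``leading-monomial elimination'' argument as in the proof of Lemma~\mref{lem:gseq}, peeling off at each stage the special normal $s$-word matching the current leading monomial (which is strictly $\ordq$-below $w$), using Corollary~\mref{lem:jeq} to control the error terms, and terminating by well-foundedness of $\ordq$. Thus every composition is trivial modulo $(S,w)$, so $S$ is a \gsb in $\opliez$.

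The main obstacle is the last step: showing that membership of $\langle f,g\rangle_w$ in $\Idl(S)$ with the right leading term upgrades to a genuine \emph{trivial} expression $\sum_j\beta_j[q_j\suba{s_j}]_{\lbar{s_j}}$ with $w\ord q_j\suba{\lbar{s_j}}$. The subtlety is that a priori representation via Eq.~\meqref{eq:ideal} does not guarantee the degree bound, and one must iterate the reduction carefully — at each step subtracting an appropriate multiple of a special normal $s$-word whose leading monomial equals the current leading monomial of the remainder — and verify, using Corollary~\mref{lem:jeq} and the invariance of $\ordq$, that both the new leading monomial and all error terms produced stay strictly $\ordq$-below $w$, so that well-foundedness of $\ordq$ ensures the process halts.
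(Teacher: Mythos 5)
Your proof is correct and follows essentially the same route as the paper: \mref{item:cda}$\Rightarrow$\mref{item:cdb} via Lemma~\mref{lem:gseq} and the associative Composition--Diamond lemma of~\mcite{BCQ}, \mref{item:cdb}$\Rightarrow$\mref{item:cdc} via Lemma~\mref{lem:rep} together with the leading-monomial argument for linear independence, and \mref{item:cdc}$\Rightarrow$\mref{item:cda} via Lemma~\mref{lem:rep} and the direct-sum decomposition. The only remark is that the degree bound $w\ord q_j\suba{\lbar{s_j}}$ you flag as the ``main obstacle'' is already part of the conclusion of Lemma~\mref{lem:rep} (which, combined with $w\ord \lbar{\langle f,g\rangle_w}$, gives the triviality condition directly), so the extra elimination argument you sketch at the end is exactly the proof of that lemma and need not be repeated.
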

\begin{proof}
\meqref{item:cda}$\Rightarrow$ \meqref{item:cdb}
Notice that $f\in \Idl( S )\subseteq \Id( S )$, where  $\Id( S )$ is the ideal of $\bfk\plie{Z}$ generated by $S$. By Lemma~\mref{lem:gseq} and the Composition-Diamond lemma for operated algebras in~\mcite{BCQ}, we have $\lbar{f}=q\suba{\lbar{s}}$ for some $s\in S$ and $q\in\sopm{Z}$.

\meqref{item:cdb}$\Rightarrow$ \meqref{item:cdc}
First, we show that elements in $\irrl( S )$ are linear independent. Assume that $\sum \alpha_i [u_i]=0$ in $\opliez/ \Idl(S)$ with $u_1\ord u_2 \ord \cdots$, for each $[u_i]\in \irrl( S )$. Then
$\sum \alpha_i [u_i]\in \Idl(S).$
If there exists the first non-zero coefficient $\alpha_k$, then $\lbar{\sum \alpha_i [u_i]}=u_k$. Further by~\meqref{item:cdb}, we have $u_k=q\suba{s}\in\alsbw{Z}$ for some $s\in S$ and so
$[u_k]\notin\irrl( S )$, a contradiction. Thus $\alpha_i=0$ for all $i$. Second, by Lemma~\mref{lem:rep}, for any $f\in\opliez$, we have
$$f=\sum_i \alpha_i[u_i]+\sum_j \beta_j [q_i\suba{s_i}]_{\lbar{s_i}},\,[u_i]\in\irrl( S ).$$
Therefore,
$$f+\Idl(S)=\sum_i \alpha_i[u_i]+\sum_j \beta_j [q_i\suba{s_i}]_{\lbar{s_i}}+\Idl(S)=\sum_i \alpha_i[u_i]+\Idl(S),\,[u_i]\in\irrl( S ),$$
and so  $\irrl( S )$ is a linear basis of $\opliez/ \Idl(S)$.

\meqref{item:cdc}$\Rightarrow$ \meqref{item:cda}
 For any composition $\langle f, g\rangle_w$ of $f,g \in S$, we know that $\langle f, g\rangle_w\in \Idl(S).$
 By Lemma~\mref{lem:rep},
 $$\langle f, g\rangle_w = \sum_j \beta_j [q_i\suba{s_i}]_{\lbar{s_i}},$$
 where each $\beta_j\in\bfk$, $s_i\in S$ and $w\ord \lbar{\langle f, g\rangle_w}\ordq q_i\suba{\lbar{s_i}}$.
 Therefore  $S$ is a  \gsb in $\opliez$.
\end{proof}

Then \rcp can be formulated for Lie algebras as follows in terms of \gsbs.

\begin{defn}
Let $\Phi\subseteq\oplie\subseteq \bfk\plie{X}$ be a system of monic OLPIs. Let $Z$ be a well-ordered set.
We call $\Phi$ a {\bf \GS system} if ${S_\Phi}$ is a \gsb in $\opliez$, where
$$S_\Phi=\{\phi(u_1,\ldots,u_k)\,|\,u_1,\ldots,u_k\in \opliez, \phi\in \Phi\}\subseteq\opliez.$$
\end{defn}

\begin{problem}
{\em (\rcp for Lie algebras via \gsbs)} Determine all \GS systems of OLPIs.
\mlabel{prob:rpclgsb}
\end{problem}

\subsection{Equivalence of the two formulations}
In this subsection, we will establish a connection between the two formulations of \rcp for Lie algebras, stated in  Problem~\mref{prob:rpcltrs} and Problem~\mref{prob:rpclgsb} respectively.

An element $f\in\opliez$ is called {\bf $S$-reducible} if there exists $g\in\opliez$, $g \neq f$, such that $f \astarrow_{\Pil{S}} g$.
The following result gives a sufficient condition for terminating.

\begin{lemma}
Let $S$ be a monic set of  $\opliez$, and let $\Pil{S}$ be the term-rewriting system from $S$ in Eq.~\meqref{eq:ltrs} with an invariant monomial order $\ordq$ on $\plie{Z}$. Then
$\Pil{S}$ is terminating.
\mlabel{lem:termi}
\end{lemma}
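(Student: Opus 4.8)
The plan is to show that the rewriting system $\Pil{S}$ cannot admit an infinite chain of one-step rewritings by exhibiting a strictly decreasing quantity under the invariant monomial order $\ordq$. The key observation is that every rewriting rule in $\Pil{S}$ has the form $[q\suba{\lbar s}] \to R([q\suba{s}]_{\lbar s})$, and by Corollary \mref{lem:jeq} we know that $[q\suba{s}]_{\lbar s} = q\suba{s} + \sum_i \alpha_i q_i\suba{s}$ with $q\suba{\lbar s} \ord q_i\suba{\lbar s}$ for all $i$; moreover $s\in S$ is monic with leading bracketed word $\lbar s$, so $\lbar{[q\suba{s}]_{\lbar s}} = q\suba{\lbar s}$, which is precisely the term $[q\suba{\lbar s}]$ being rewritten (identifying non-associative Lyndon-Shirshov bracketed words with their underlying associative words via Lemma \mref{lem:ldt}). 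Thus $R([q\suba{s}]_{\lbar s}) = [q\suba{\lbar s}] \cdot (\text{leading coeff}) - [q\suba{s}]_{\lbar s}$ has the property that every element of its support is a non-associative Lyndon-Shirshov bracketed word $[w]$ with $q\suba{\lbar s} \ord w$.

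First I would make precise the partial order being used to measure descent. For $f \in \opliez$, written uniquely as $f = \sum_i \alpha_i [u_i]$ with $[u_i] \in \nlsbw{Z}$ distinct and $u_1 \ord u_2 \ord \cdots$, the relevant data is the finite multiset $\{u_1, u_2, \ldots\} \subseteq \alsbw{Z}$ of underlying associative Lyndon-Shirshov bracketed words appearing in the support. I would order such finite multisets by the Dershowitz-Manna multiset order induced by $\ordq$ on $\alsbw{Z}$; since $\ordq$ is a well-order on $\plie{Z}$, its restriction to $\alsbw{Z}$ is a well-order, and hence the induced multiset order is well-founded. The second step is to verify that a one-step rewriting $f \to_{\Pil{S}} g$ strictly decreases this multiset. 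Suppose $f = c_t[t] \dps (-R_t(f))$ with $[t] = [q\suba{\lbar s}] \in \supp(f)$, and $g = c_t \cdot R([q\suba{s}]_{\lbar s}) - R_t(f)$. Removing the single occurrence of $t$ from the support multiset and inserting, in its place, the underlying words of the support of $R([q\suba{s}]_{\lbar s})$ — all of which are $\ord$-smaller than $t = q\suba{\lbar s}$ by the previous paragraph — yields, after accounting for possible cancellation against $-R_t(f)$, a multiset that is strictly smaller than that of $f$ in the Dershowitz-Manna order (removal of $t$ and replacement by finitely many strictly smaller elements; any further cancellation only removes elements, which also decreases). The third step is routine: an infinite rewriting chain would give an infinite strictly descending sequence in the well-founded multiset order, a contradiction, so $\Pil{S}$ is terminating.

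The main obstacle I anticipate is handling the bookkeeping between non-associative Lyndon-Shirshov bracketed words and their underlying associative words cleanly: the rewriting takes place in $\opliez \subseteq \bfk\plie{Z}$ with basis $\nlsbw{Z}$, but the order $\ordq$ and the notion of leading monomial are phrased via the associative words $\alsbw{Z}$ (this is exactly the content of Lemma \mref{lem:ldt} and the identification $[u] \ge [v] \Leftrightarrow u \ge v$). One must be careful that distinct basis elements $[u_i]$ can have distinct underlying words $u_i$ (the map $[w]\mapsto w$ is a bijection on Lyndon-Shirshov bracketed words), so the passage from the polynomial $f$ to its support multiset is unambiguous, and that the new terms produced genuinely lie below $t$ — this is guaranteed by Corollary \mref{lem:jeq}, whose inequality $q\suba{\lbar s}\ord q_i\suba{\lbar s}$ is the engine of the whole argument. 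A secondary point to check is that cancellation between $c_t R([q\suba{s}]_{\lbar s})$ and $-R_t(f)$ cannot create new terms above $t$; it cannot, since both summands have supports strictly below $t$ (for $R_t(f)$ this is the definition $R_t(f) = c_t t - f$ together with... actually $R_t(f)$ may have terms incomparable to but not above... ) — more carefully, $\supp(R_t(f)) = \supp(f)\setminus\{t\}$ has all elements $\ord$-incomparable handled by the fact that we only compare to $t$ via the multiset order, and removing $t$ while adding strictly smaller elements suffices regardless of the rest of $\supp(f)$.
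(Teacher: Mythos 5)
Your proof is correct, but it takes a genuinely different route from the paper's. You reduce termination to the well-foundedness of the Dershowitz--Manna multiset extension of $\ordq$ applied to the support of each polynomial: a one-step rewriting deletes the chosen monomial $[t]=[q\suba{\lbar{s}}]$ from the support and inserts only monomials $[v]$ with $t\ord v$ (this is exactly what Corollary~\mref{lem:jeq} together with Lemma~\mref{lem:ldt} and the monicity of $s$ guarantees for $R([q\suba{s}]_{\lbar{s}})$), so the support multiset strictly decreases in a well-founded order and no infinite chain exists. The paper instead runs a minimal-counterexample argument directly on the well-order $\ordq$: assuming some $f$ admits an infinite reduction chain, it considers the least possible value $w_0$ of the largest $S$-reducible monomial $L(f)$ over all such $f$, observes that along the fixed infinite chain $L(f_i)=w_0$ for all $i$ and hence no step can actually rewrite $[w_0]$ (rewriting it would expel it from the support and force $L$ to drop), subtracts $\alpha_0[w_0]$ to produce an infinite chain starting at an element with strictly smaller $L$, and contradicts minimality. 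Your approach buys modularity and robustness -- it is the standard termination scheme for rewriting over a well-founded term order, needs no analysis of which monomial each step touches, and sidesteps the slightly delicate ``none of the $[w_i]$ is involved'' claim in the paper; the paper's approach avoids introducing the multiset order and its well-foundedness, working only with the given well-order. Two small points of hygiene in your write-up: since $\ordq$ is a well-order (hence total), your worry about ``incomparable'' elements of $\supp(R_t(f))$ is vacuous, and since distinct basis elements $[u_i]\in\nlsbw{Z}$ have distinct underlying words $u_i$, your ``multiset'' is in fact a set -- neither affects the argument.
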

\begin{proof}
Let
$$\mathcal{C}=\big\{f\in\opliez \,\big|\,\text{ there is an infinite $S$-reduction chain } f:=f_0 \to_{\Pil{S}} f_1\to_{\Pil{S}}\cdots \}.$$
We only need to prove that $\mathcal{C}=\emptyset$. Suppose to the contrary that $\mathcal{C}\neq\emptyset$.  Since $f$ is $S$-reducible for all $f\in\mathcal{C}$ and $\ordq$ is a well-order on $\plie{Z}$, the set
$$\mathcal{L}:=\{L(f):={\rm max}\{w\in\plie{Z}\,|\, [w] \text{ is a monomial of $f$ and $w$ is $S$-reducible }\}\}$$
is non-empty and has a least element $w_0$. We fix an $f\in\mathcal{C}$  with $L(f)=w_0$ and
and fix an infinite $S$-reduction chain
$$f:=f_0 \to_{\Pil{S}} f_1\to_{\Pil{S}}\cdots.$$
Then we have $f_i\in\mathcal{C}$ and hence $S$-reducible for all $i \geq 1$.
Let $w_i=L(f_i)$. By the definition of $\Pil{S}$, $w_0\geq w_i\geq\cdots.$
Since every $f_i$ is in $\mathcal{C}$, and $w_0$ is the least element in $\mathcal{L}$, we must have $w_0=w_i$ for all $i\geq1.$
Let $g_i = f_i -\alpha_i [w_i]$, where $\alpha_i$ is the coefficient of $w_i$ in $f_i$. Then we have the infinite reduction sequence
$g_0 \to_{\Pil{S}} g_1\to_{\Pil{S}}\cdots.$
and $L(g_0) < L(f)=L(f_0)$, by none of the $[w_i]$ is involved in the $S$-reduction of the fixed sequence starting with $f$. This is a contradiction, showing that $\mathcal{C}=\emptyset$. This completes the proof.
\end{proof}

Thanks to Lemma~\mref{lem:termi}, we obtain

\begin{lemma}
Let $Z$ be a well-ordered set. Let $S$ be a monic set of  $\opliez$, and let $\Pil{S}$ be the term-rewriting system from $S$ in Eq.~\meqref{eq:ltrs}.
With an invariant monomial order $\ordq$ on $\plie{Z}$.
\begin{enumerate}
\item If $f\astarrow_{\Pil{S}} g$ for $f, g \in\opliez$, then $f - g \in \Idl(S)$.\mlabel{item:rta}
\item $\opliez=\bfk\irrl(S)+\Idl(S).$ \mlabel{item:rte}
\item If ${\Pil{S}}$ is confluent, then $f\in\Idl(S)$ if and only if $f \astarrow_{\Pil{S}} 0$.\mlabel{item:rtb}
\item ${\Pil{S}}$ is confluent if and only if $\bfk\irrl(S)\cap\Idl(S)=0$.\mlabel{item:rtc}
\end{enumerate}
\mlabel{lem:rtsc}
\end{lemma}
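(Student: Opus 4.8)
The plan is to prove the four assertions in the order (a), (b), (c), (d), using (a) as the workhorse and concentrating the only genuinely new content in the forward implication of (d). For (a) I would unwind the construction of $\Pil{S}$: each rule has the shape $[q\suba{\lbar{s}}]\to R([q\suba{s}]_{\lbar{s}})$ with $s\in S$, $q\in\sopm{Z}$ and $q\suba{\lbar{s}}\in\alsbw{Z}$, and since $s$ is monic, Corollary~\mref{lem:jeq} shows that $[q\suba{s}]_{\lbar{s}}$ has leading monomial $[q\suba{\lbar{s}}]$ with coefficient $1$, so $R([q\suba{s}]_{\lbar{s}})=[q\suba{\lbar{s}}]-[q\suba{s}]_{\lbar{s}}$. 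Hence a single rewriting $f\to_{\Pil{S}}g$ performed at the monomial $[q\suba{\lbar{s}}]$ with coefficient $\beta$ satisfies $f-g=\beta\,[q\suba{s}]_{\lbar{s}}$, which lies in $\Idl(S)$ by the description \meqref{eq:ideal}; summing the one-step differences along a finite reduction chain and using that $\Idl(S)$ is a $\bfk$-subspace yields (a). Then (b) is immediate: by Lemma~\mref{lem:termi} the system $\Pil{S}$ is terminating, so each $f\in\opliez$ rewrites to some normal form $\hat f$; as no rule of $\Pil{S}$ applies to $\hat f$, we have $\supp(\hat f)\subseteq\irrl(S)$, i.e.\ $\hat f\in\bfk\irrl(S)$, while $f-\hat f\in\Idl(S)$ by (a), whence $f=\hat f+(f-\hat f)\in\bfk\irrl(S)+\Idl(S)$ (alternatively one may invoke Lemma~\mref{lem:rep}).

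Next I would dispatch the easy halves of (c) and (d). If $f\astarrow_{\Pil{S}}0$, then $f=f-0\in\Idl(S)$ by (a), which is the backward implication of (c). For the backward implication of (d), assume $\bfk\irrl(S)\cap\Idl(S)=0$; since $\Pil{S}$ is terminating it is enough to show that normal forms are unique, and indeed if $f\astarrow_{\Pil{S}}g_1$ and $f\astarrow_{\Pil{S}}g_2$ with $g_1,g_2$ normal forms, then $g_1,g_2\in\bfk\irrl(S)$ while $g_1-g_2=(g_1-f)+(f-g_2)\in\Idl(S)$ by (a), so $g_1-g_2\in\bfk\irrl(S)\cap\Idl(S)=0$; reducing both branches of an arbitrary fork to normal forms then exhibits the fork as joinable, so $\Pil{S}$ is confluent. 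Granting the forward implication of (d), the forward implication of (c) follows at once: if $\Pil{S}$ is confluent and $f\in\Idl(S)$, then its unique normal form $\hat f$ lies in $\bfk\irrl(S)$ and, by (a), in $\Idl(S)$, so $\hat f\in\bfk\irrl(S)\cap\Idl(S)=0$, i.e.\ $f\astarrow_{\Pil{S}}0$.

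It remains to prove the forward implication of (d), which is the heart of the lemma. I would argue by contradiction: assume $\Pil{S}$ is confluent and $0\neq g\in\bfk\irrl(S)\cap\Idl(S)$, and write $g=\sum_i\alpha_i[q_i\suba{s_i}]_{\lbar{s_i}}$ with all $\alpha_i\neq0$ as in \meqref{eq:ideal}, chosen so that $w:=\max_i q_i\suba{\lbar{s_i}}$ is least, in the well-ordered set $\alsbw{Z}$, among all such representations of $g$. Since $g\in\bfk\irrl(S)$ and $[w]\notin\irrl(S)$, the coefficient of $[w]$ in $g$, which equals $\sum_{i:\,q_i\suba{\lbar{s_i}}=w}\alpha_i$ (the remaining summands having all their monomials strictly below $w$), must vanish, so at least two indices attain $w$ and the corresponding $\alpha_i$ sum to $0$. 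Now take any two indices $i,j$ with $q_i\suba{\lbar{s_i}}=q_j\suba{\lbar{s_j}}=w$: the element $[q_i\suba{s_i}]_{\lbar{s_i}}$ has leading monomial $[w]$, and rewriting it at $[w]$ by the rule coming from $s_i$ produces $0$, whereas rewriting it at $[w]$ by the rule coming from $s_j$ produces $[q_i\suba{s_i}]_{\lbar{s_i}}-[q_j\suba{s_j}]_{\lbar{s_j}}$; confluence makes this fork joinable, and since $0$ is irreducible we conclude $[q_i\suba{s_i}]_{\lbar{s_i}}-[q_j\suba{s_j}]_{\lbar{s_j}}\astarrow_{\Pil{S}}0$, a difference whose leading monomial is strictly below $w$. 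I would then use the elementary observation that any $e$ with $e\astarrow_{\Pil{S}}0$ has a representation $e=\sum_\ell\beta_\ell[p_\ell\suba{t_\ell}]_{\lbar{t_\ell}}$ with $\lbar{e}\ordq p_\ell\suba{\lbar{t_\ell}}$ for every $\ell$ (each step subtracts a scalar multiple of a special normal word whose leading monomial lies in the support of the current element, and leading monomials are non-increasing along a chain). Fixing one index $i_0$ among those attaining $w$ and telescoping $\sum_{i:\,q_i\suba{\lbar{s_i}}=w}\alpha_i[q_i\suba{s_i}]_{\lbar{s_i}}=\sum_{i:\,q_i\suba{\lbar{s_i}}=w}\alpha_i\big([q_i\suba{s_i}]_{\lbar{s_i}}-[q_{i_0}\suba{s_{i_0}}]_{\lbar{s_{i_0}}}\big)$ (legal because those $\alpha_i$ sum to $0$) and expanding each difference as above, one rewrites $g$ as a combination of special normal words all of whose leading monomials lie strictly below $w$, contradicting the minimality of $w$. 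Hence $\bfk\irrl(S)\cap\Idl(S)=0$.

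I expect the main obstacle to be exactly this last step: turning the qualitative hypothesis of confluence into the concrete fact that the ``composition-like'' difference $[q_i\suba{s_i}]_{\lbar{s_i}}-[q_j\suba{s_j}]_{\lbar{s_j}}$ rewrites to $0$, and then propagating the bound on leading monomials through its reduction — all of which must be carried out while keeping in mind that one-step rewriting is not $\bfk$-linear, which is precisely why the telescoping has to go through these differences rather than through the individual summands. The accompanying bookkeeping with $\sopm{Z}$, the special normal words $[\,\cdot\,]_{\lbar{\cdot}}$ and the invariant order $\ordq$ is delicate but routine once this idea is in place.
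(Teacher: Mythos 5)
Your proposal is correct, and for parts \meqref{item:rta}, \meqref{item:rte} and the easy halves of \meqref{item:rtb}, \meqref{item:rtc} it coincides with the paper's argument; but on the substantive forward implications you take a genuinely different route. The paper proves \meqref{item:rtb} first: starting from the representation \meqref{eq:ideal} it reduces each special normal word $[q_i\suba{s_i}]_{\lbar{s_i}}$ to $0$ in one step and concludes $f\astarrow_{\Pil{S}}0$, then gets the forward half of \meqref{item:rtc} in three lines (a nonzero $f\in\bfk\irrl(S)\cap\Idl(S)$ would be a normal form that also rewrites to $0$). The step ``each summand rewrites to $0$, hence the sum does'' silently uses a compatibility of $\astarrow_{\Pil{S}}0$ with linear combinations, which is exactly the non-linearity issue you flag; in the associative reference this is handled by a separate lemma under confluence. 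You instead prove the forward half of \meqref{item:rtc} directly, by a minimal-leading-word argument: choosing a representation of $g\in\bfk\irrl(S)\cap\Idl(S)$ with least maximal word $w$, observing that the coefficient of $[w]$ must vanish, and using confluence on the fork obtained by rewriting $[q_i\suba{s_i}]_{\lbar{s_i}}$ at $[w]$ by two different rules to replace the $w$-terms by combinations of special normal words with strictly smaller leading words --- essentially re-deriving the hard direction of a Composition--Diamond-type cancellation at the level of the rewriting system --- and you then deduce the forward half of \meqref{item:rtb} from \meqref{item:rtc} via termination and \meqref{item:rta}. Your ordering costs more work up front but is self-contained and sidesteps the linearity gap; the paper's ordering is shorter and parallels the associative treatment, at the price of leaving that compatibility step implicit. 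Both are legitimate, and your telescoping through differences $[q_i\suba{s_i}]_{\lbar{s_i}}-[q_{i_0}\suba{s_{i_0}}]_{\lbar{s_{i_0}}}$, together with the observation that any $e\astarrow_{\Pil{S}}0$ admits a representation by special normal words with leading words bounded by $\lbar{e}$, is carried out correctly.
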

\begin{proof}
\meqref{item:rta} If $f=g$, then $f-g\in \Id_{\rm Lie}(S)$. Suppose $f\neq g$ and $n \geq 1$ is the minimum number such that $f$ rewrites to $g$ in $n$ steps. We prove the result by induction on $n$.
For $n=1$, we write
$$f=\alpha [q\suba{\lbar{s}}]\dps h \to_{\Pil{S}} \alpha[R([q\suba{s}]_{\lbar{s}})]+h=g,$$
with $\alpha\in\bfk$, $s\in S$ and $h\in\opliez$. By %Lemma~\mref{lem:jeq} and
Eq.~\meqref{eq:ltrs},
$$f-g=\alpha[q\suba{\lbar{s}}]-\alpha[R([q\suba{s}]_{\lbar{s}})]=\alpha[q\suba{s}]_{\lbar{s}}\in \Idl(S).$$
Assume that the result is true for $n \geq 1$ and consider the case of $n+1$.
Then we have the following rewritings
$$f\to_{\Pil{S}} f' \astarrow_{\Pil{S}} g$$
for some $f \neq f' \in\opliez$ and $f'$ rewrites to $g$ in $n$ steps. By the induction hypothesis,
$f - f' \in\Idl(S)$ and $f' - g \in\Idl(S)$. Thus $f - g \in\Idl(S)$, as required.

\meqref{item:rte} For any $f\in \opliez$, by Lemma~\mref{lem:termi}, ${\Pil{S}}$ is terminating. Thus there is $g\in\bfk\irrl(S)$ such that
$f$ has a normal form $g$. By Item~\meqref{item:rta}, $f-g\in\Idl(S)$ and so $f \in \irrl(S) + \Idl(S)$.

\meqref{item:rtb} For $f\in\Idl(S)$, by Eq.~\meqref{eq:ideal}, we have
$$f=\sum_{i=1}^n \alpha_i[{q_i} \suba{s_i}]_{\lbar{s_i}},\, \text{ where each }\, \alpha_i\in\bfk, s_i\in S\,\text{ and }\, q_i\in\sopm{Z}.$$
For each $[q\suba{s_i}]_{\lbar{s_i}}$ with  $1\leq i \leq n$, we may write
$$[q\suba{s_i}]_{\lbar{s_i}}=[q\suba{\lbar{s_i}}]\dps \big(-[R([q\suba{s_i}]_{\lbar{s_i}})]\big).$$
Thus
$$\alpha_i[{q_i} \suba{s_i}]_{\lbar{s_i}}=\alpha_i[q\suba{\lbar{s_i}}]-\alpha_i[R([q\suba{s_i}]_{\lbar{s_i}})]\to_{\Pil{S}} \alpha_i[R([q\suba{s_i}]_{\lbar{s_i}})]-\alpha_i[R([q\suba{s_i}]_{\lbar{s_i}})]=0 ,$$
and so $f \astarrow_{\Pil{S}} 0$. Conversely, if $f \astarrow_{\Pil{S}} 0$, then $f$ is in $\Idl(S)$ from Item~\meqref{item:rtb}.

\meqref{item:rtc}
Suppose that ${\Pil{S}}$ is not confluent. By Lemma~\mref{lem:termi}, ${\Pil{S}}$ is terminating. So there is an
$f\in \opliez$ such that $f$ has two distinct normal forms, say $g$ and $h$. Then $g,h\in\bfk\irrl(S)$
and so $g-h\in\bfk\irrl(S)$. By Item~\meqref{item:rta}, $f-g\in\Idl(S)$ and $f-h\in\Idl(S)$. Hence
$0\neq g-h\in\irrl(S) \cap\Idl(S)$, this is a contradiction. Conversely, suppose $\bfk\irrl(S)\cap\Idl(S)\neq0$. Let $0\neq f\in\bfk\irrl(S)\cap\Idl(S)$. Since $ f\in\bfk\irrl(S)$, it is in normal form. On the other hand, from $f\in\Idl(S)$ and Item~\meqref{item:rtb},
we have $f \astarrow_{\Pil{S}} 0$. Thus $w$ has two normal forms $f$ and $0$, contradicting that ${\Pil{S}}$ is confluent.
\end{proof}
Now we can give several equivalent conditions for \gsbs.

\begin{theorem}
Let $Z$ be a well-ordered set. Let $S$ be a monic set of  $\opliez$, and let $\Pil{S}$ be the term-rewriting system from $S$ in Eq.~\meqref{eq:ltrs}. With an invariant monomial order $\ordq$ on $\plie{Z}$,
the following statements are equivalent.
\begin{enumerate}
  \item $\Pil{S}$ is convergent. \mlabel{item,gsbrtsa}
  \item $\Pil{S}$ is confluent. \mlabel{item,gsbrtsb}
  \item $\Idl(S) \oplus \bfk\irrl(S) =\opliez$. \mlabel{item,gsbrtsd}
  \item $S$ is a \gsb in $\opliez$ with respect to $\ordq$.  \mlabel{item,gsbrtse}
\end{enumerate}
\mlabel{lem:liegreq}
\end{theorem}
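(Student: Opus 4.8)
The plan is to obtain all four equivalences by assembling the lemmas of this section, using the direct sum decomposition in~\meqref{item,gsbrtsd} as the hub.

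First I would treat \meqref{item,gsbrtsa}$\Leftrightarrow$\meqref{item,gsbrtsb}: by Lemma~\mref{lem:termi} the term-rewriting system $\Pil{S}$ is always terminating, so convergence (terminating plus confluent) collapses to confluence alone. Next, for \meqref{item,gsbrtsb}$\Leftrightarrow$\meqref{item,gsbrtsd}, I would combine Lemma~\mref{lem:rtsc}\meqref{item:rte}, which gives the sum $\opliez=\bfk\irrl(S)+\Idl(S)$ unconditionally, with Lemma~\mref{lem:rtsc}\meqref{item:rtc}, which says that $\Pil{S}$ is confluent if and only if $\bfk\irrl(S)\cap\Idl(S)=0$; together these state exactly that confluence amounts to the sum being direct, i.e.\ to~\meqref{item,gsbrtsd}.

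Finally, for \meqref{item,gsbrtsd}$\Leftrightarrow$\meqref{item,gsbrtse}, I would invoke the Composition-Diamond lemma for operated Lie algebras (Lemma~\mref{lem:cdmol}): its condition~\meqref{item:cda} is precisely that $S$ is a \gsb in $\opliez$, while its condition~\meqref{item:cdc} states that $\opliez=\bfk\irrl(S)\oplus\Idl(S)$ and that $\irrl(S)$ is a $\bfk$-basis of $\opliez/\Idl(S)$. The only point that needs a remark is that the second clause of~\meqref{item:cdc} is automatic once the sum is direct: since $\irrl(S)\subseteq\nlsbw{Z}$ and $\nlsbw{Z}$ is a linear basis of $\opliez$ by Theorem~\mref{lem:freelie}, the set $\irrl(S)$ is linearly independent, so a decomposition $\opliez=\bfk\irrl(S)\oplus\Idl(S)$ immediately makes $\irrl(S)$ descend to a basis of $\opliez/\Idl(S)$. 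Thus~\meqref{item,gsbrtsd} is literally condition~\meqref{item:cdc}, hence equivalent to~\meqref{item:cda}.

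I do not anticipate a genuine obstacle here: all of the substantive work --- termination (Lemma~\mref{lem:termi}), the ambient sum decomposition and the confluence criterion (Lemma~\mref{lem:rtsc}), and the Composition-Diamond lemma (Lemma~\mref{lem:cdmol}) --- has already been carried out, so this theorem is essentially a bookkeeping synthesis. The only thing worth double-checking is that the notion of ``convergent system'' entering Problem~\mref{prob:rpcltrs} matches the displayed chain of equivalences, which it does by the definition given just before that problem.
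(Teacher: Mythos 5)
Your proposal is correct and follows essentially the same route as the paper's proof: Lemma~\ref{lem:termi} for termination, Lemma~\ref{lem:rtsc}\eqref{item:rte} and \eqref{item:rtc} for confluence being equivalent to the direct sum, and the Composition-Diamond Lemma~\ref{lem:cdmol} for the equivalence with the \gsb condition. Your extra remark that the basis clause in Lemma~\ref{lem:cdmol}\eqref{item:cdc} is automatic once the sum is direct (since $\irrl(S)\subseteq\nlsbw{Z}$ is linearly independent by Theorem~\ref{lem:freelie}) is a welcome clarification the paper leaves implicit.
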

\begin{proof}
By Lemma~\mref{lem:termi}, $\Pil{S}$ is terminating.
Then Item~\meqref{item,gsbrtsa} and Item~\meqref{item,gsbrtsb} are equivalent.
Further by Items~\meqref{item:rte} and~\meqref{item:rtc} in Lemma~\mref{lem:rtsc}, Item~\meqref{item,gsbrtsb} and Item~\meqref{item,gsbrtsd} are equivalent.
The equivalence of Item~\meqref{item,gsbrtsd} and Item~\meqref{item,gsbrtse} is obtained by Composition-Diamond Lemma~\mref{lem:cdmol}.
\end{proof}

Now we are ready to give the relationship between the two formulations of \rcp for Lie algebras.
\begin{corollary}
With an invariant monomial order $\ordq$, the two formulations of \rcp for Lie algebras in Problem~\mref{prob:rpcltrs} and
Problem~\mref{prob:rpclgsb} are equivalent.
\mlabel{coro:rcpleq}
\end{corollary}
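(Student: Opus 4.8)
The plan is to read the corollary off Theorem~\mref{lem:liegreq}. Both formulations attach to a system $\Phi$ of monic OLPIs the \emph{same} set
$S_\Phi=\{\phi(u_1,\ldots,u_k)\mid u_1,\ldots,u_k\in\opliez,\ \phi\in\Phi\}\subseteq\opliez$, and by the respective definitions $\Phi$ solves Problem~\mref{prob:rpcltrs} exactly when $\Pil{S_\Phi}$ is convergent, while $\Phi$ solves Problem~\mref{prob:rpclgsb} exactly when $S_\Phi$ is a \gsb in $\opliez$ with respect to $\ordq$. So it suffices to show that, for a fixed invariant monomial order $\ordq$, these two conditions on $S_\Phi$ are equivalent.

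First I would record that $S_\Phi$ is monic, so that Theorem~\mref{lem:liegreq} may be invoked with $S=S_\Phi$; this is the one preliminary point, and it follows from the hypothesis that $\Phi$ is monic together with Lemma~\mref{lem:ldt} and the monomial-order property, which pin down $\lbar{\phi(u_1,\ldots,u_k)}$ as the corresponding substitution into $\lbar{\phi}$. Then Theorem~\mref{lem:liegreq}, applied to $S=S_\Phi$, yields at once the equivalence of its Item~\meqref{item,gsbrtsa} (``$\Pil{S_\Phi}$ is convergent'') and Item~\meqref{item,gsbrtse} (``$S_\Phi$ is a \gsb in $\opliez$''). Unwinding the definitions of a convergent system and of a \GS system, this is precisely the statement that $\Phi$ is convergent if and only if $\Phi$ is a \GS system, so the solution sets of Problems~\mref{prob:rpcltrs} and~\mref{prob:rpclgsb} coincide.

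I do not expect a genuine obstacle here: all the substantive work---termination of $\Pil{S}$ (Lemma~\mref{lem:termi}), the dictionary among confluence, the decomposition $\opliez=\bfk\irrl(S)\oplus\Idl(S)$, and the \gsb condition (Lemma~\mref{lem:rtsc} and the Composition--Diamond Lemma~\mref{lem:cdmol})---has already been carried out, and Theorem~\mref{lem:liegreq} consolidates it. The corollary is thus essentially a restatement of that theorem in the language of OLPI systems and of \rcp, and its proof is short logical bookkeeping; the only line that warrants care is the remark that $S_\Phi$ inherits monicity from $\Phi$.
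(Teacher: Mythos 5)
Your proposal is correct and matches the paper's (implicit) argument: the corollary is stated in the paper without a separate proof precisely because it is the specialization of Theorem~\ref{lem:liegreq} (the equivalence of its items on convergence of $\Pil{S}$ and on $S$ being a \gsb) to $S=S_\Phi$, combined with unwinding the definitions of a convergent system and a \GS system of OLPIs. Your added remark that $S_\Phi$ inherits monicity from $\Phi$, so that the theorem applies, is a sensible point of care that the paper leaves tacit.
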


\subsection{Relationship between the program for associative algebras and for Lie algebras}
\mlabel{ssec:asslie}
We now establish the relationship between \rcp for associative algebras and the one for Lie algebras.

\subsubsection{\rcp for associative algebras}
We first recall \rcp for associative algebras~\mcite{GG,GSZ,ZGGS}.

Let $X$ be a set. Similar to the definition of $\plie{X}$ in Section~\mref{ssec:aslsw}, we define the free operated monoid $\opmxm$ by replacing $S$ with $M$.
Also analogous to $\sopm{X}$, we can define $\sopma{X}$. See~\mcite{GG} for more details. Let $\phi\subseteq\opmx$. We call $\phi=0$ (or simply $\phi$) an {\bf operated polynomial identity (OPI)}.

We now recall some basic concepts for operated algebras.
\begin{defn}
Let  $X$ be a set.
\begin{enumerate}
  \item For an operated algebra $(R,P)$, we say that
  $R$ is a {\bf $\phi$-algebra} and $P$ is a {\bf $\phi$-operator}, if
  $$\phi(r_1, \ldots ,r_k)=0,\quad \text{for all } r_1, \ldots ,r_k\in R.$$
  More generally, for a subset $\Phi\subseteq\opmx$, we call $R$ (resp. $P$) a {\bf $\Phi$-algebra} (resp. {\bf $\Phi$-operator}) if $R$ (resp. $P$)
is a $\phi$-algebra (resp. $\phi$-operator) for each $\phi\in\Phi$.

  \item Let $R$ be an algebra. A linear operator $P$ on $R$ is called {\bf algebraic} if there is $0\neq \Phi\subseteq \opmx$ such that $P$ is a $\Phi$-operator.
\end{enumerate}
\mlabel{de:algop}
\end{defn}

\begin{defn}
Let $\geq$ be a monomial order on $\opmzm$ and $S$ be a monic subset of $\opmz$. We define
\begin{equation}
\Pia{S}:=\{q\suba{\lbar{s}}\rightarrow q\suba{R(s)}\,|\, q \in \sopma{Z}, s=\lbar{s}-R(s)\in S\}\subseteq\opmzm\times \opmz.
\mlabel{eq:trs}
\end{equation}
\end{defn}

\begin{defn}
Let $X$ be a set, and let $\Phi\subseteq\opmx$ be a system of OPIs.
We call $\Phi$ {\bf convergent} if $\Pia{\Phi}$ is convergent.
\end{defn}

On the one hand, \rcp can be interpreted in terms of rewriting systems.

\begin{problem}\mcite{GG}
{\em (\rcp via rewriting systems)}. Determine all convergent systems of OPIs.
\mlabel{prob:rpctrs}
\end{problem}

On the other hand, \rcp can also be interpreted in terms of \gsbs.

\begin{problem}\mcite{GG}
{\em (\rcp via \gsbs)}. Determine all \gsbs systems of OPIs.
\mlabel{prob:rpcgsb}
\end{problem}

The relationship between Problem~\mref{prob:rpctrs} and Problem~\mref{prob:rpcgsb} has also been studied.
For this, we need the relationship between a \gsb of OPIs and a convergent rewriting system of OPIs.  We denote by $\Id(S)$ the ideal of $\opmz$ generated by $S$.
Denote
$$\irr(S):=\left\{w\,\left |\,w\in\opmzm,  w\neq q\suba{\lbar{s}}\, \text{ for }\, s\in S \,\text{ and }\, q\in\sopma{Z}\right . \right\}.$$

\begin{lemma}\mcite{GG}
Let $Z$ be a set, and let $\geq$ be a monomial order on $\opmzm$. Let $S$
be a monic subset of $\opmz$, and let $\Pia{S}$ be the term-rewriting system from $S$ in Eq.~\meqref{eq:trs}. Then
the following statements are equivalent.
\begin{enumerate}
  \item $\Pia{S}$ is convergent.
  \item $\Pia{S}$ is confluent.
  \item $\Id(S) \cap \bfk\irr(S) = 0$.
  \item $\Id(S) \oplus \bfk\irr(S) =\opmz$.
  \item $S$ is a \gsb in $\opmz$ with respect to $\geq$.
\end{enumerate}
\mlabel{lem:asgreq}
\end{lemma}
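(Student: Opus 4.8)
The statement is the associative prototype of the equivalences just established for operated Lie algebras (Lemma~\mref{lem:termi}, Lemma~\mref{lem:rtsc} and Theorem~\mref{lem:liegreq}), and it was obtained in~\mcite{GG}; the plan is to recall that argument, which threads the five conditions together through elementary rewriting-systems bookkeeping together with one substantial input, the Composition-Diamond lemma for operated algebras of~\mcite{BCQ}.

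First I would record that $\Pia{S}$ is always terminating: since $\geq$ is a monomial order on $\opmzm$, each rule $q\suba{\lbar{s}}\to q\suba{R(s)}$ replaces the monomial $q\suba{\lbar{s}}$ by a combination of strictly smaller monomials, so the minimal-counterexample argument of Lemma~\mref{lem:termi} rules out an infinite one-step chain. Termination makes convergent and confluent equivalent by definition, so (a) $\Leftrightarrow$ (b). Next come the two bookkeeping facts, proved as in Lemma~\mref{lem:rtsc}: (i) if $f\astarrow_{\Pia{S}}g$ then $f-g\in\Id(S)$, by induction on the number of steps using $q\suba{\lbar{s}}-q\suba{R(s)}=q\suba{s}\in\Id(S)$; and (ii) $\opmz=\bfk\irr(S)+\Id(S)$, since termination yields for each $f$ a normal form lying in $\bfk\irr(S)$ and differing from $f$ by an element of $\Id(S)$. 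From (i) and (ii), confluence forces $\bfk\irr(S)\cap\Id(S)=0$, for a nonzero element of that intersection would be simultaneously a normal form and, being in $\Id(S)$, reducible to $0$; conversely $\bfk\irr(S)\cap\Id(S)=0$ together with termination forces confluence, since two distinct normal forms $g,h$ of a common $f$ would produce $0\neq g-h\in\bfk\irr(S)\cap\Id(S)$. This gives (b) $\Leftrightarrow$ (c), and combining with (ii) upgrades the intersection condition to the direct-sum decomposition $\Id(S)\oplus\bfk\irr(S)=\opmz$, which is (c) $\Leftrightarrow$ (d).

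Finally, to bring in (e) I would invoke the Composition-Diamond lemma for operated algebras of~\mcite{BCQ}: $S$ is a \gsb in $\opmz$ if and only if every nonzero $f\in\Id(S)$ satisfies $\lbar{f}=q\suba{\lbar{s}}$ for some $s\in S$ and $q\in\sopma{Z}$, which in turn is equivalent to $\irr(S)$ being a $\bfk$-basis of $\opmz/\Id(S)$, that is, to the decomposition in (d). The only non-routine ingredient is this Composition-Diamond lemma; everything else is the rewriting bookkeeping above, already carried out in the Lie setting. I expect the only delicate point to be matching the normal-form and simplicity conventions for $\Pia{S}$ used here with those in~\mcite{BCQ}.
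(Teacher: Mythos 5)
Your proposal is correct and follows essentially the same route as the source: the paper itself only cites~\mcite{GG} for this lemma, but its own proof of the Lie-algebra analogue (Theorem~\mref{lem:liegreq} via Lemmas~\mref{lem:termi} and~\mref{lem:rtsc} and the Composition-Diamond Lemma~\mref{lem:cdmol}) is exactly the termination/bookkeeping/Composition-Diamond argument you outline, transplanted to the associative setting where it is the argument of~\mcite{GG} with the Composition-Diamond lemma of~\mcite{BCQ} as the one substantial input. No gaps to report.
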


\begin{remark}
If the set $S$ does not contain $\bfone$, then we obtain a nonunital version of Lemma~\mref{lem:asgreq}.
This will be used later.
\mlabel{rem:none}
\end{remark}

\begin{corollary}\mcite{GG}
With a monomial order on $\opmzm$, the two versions Problem~\mref{prob:rpctrs} and
Problem~\mref{prob:rpcgsb} of \rcp are equivalent.
\mlabel{coro:rcpeq}
\end{corollary}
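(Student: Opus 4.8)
The plan is to deduce this as an immediate consequence of the five-way equivalence in Lemma~\mref{lem:asgreq}, in exact parallel with how Corollary~\mref{coro:rcpleq} was obtained from Theorem~\mref{lem:liegreq} in the Lie setting. The content of the corollary is purely that the \emph{solution sets} of Problems~\mref{prob:rpctrs} and~\mref{prob:rpcgsb} coincide, so there is no computation to perform; everything has already been packaged into Lemma~\mref{lem:asgreq}.

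First I would fix a monomial order $\geq$ on $\opmzm$ and unwind the two problems to the level of the associated rewriting system. A system $\Phi$ of OPIs is, by definition, \textbf{convergent} when the rewriting system $\Pia{\Phi}\subseteq\opmzm\times\opmz$ attached to the induced monic subset $S$ of $\opmz$ (via Eq.~\meqref{eq:trs}) is convergent, while $\Phi$ is a \gsbs system precisely when that same monic subset $S$ is a \gsb in $\opmz$ with respect to $\geq$. Next I would invoke Lemma~\mref{lem:asgreq}: for any monic $S\subseteq\opmz$, the rewriting system $\Pia{S}$ is convergent if and only if $S$ is a \gsb in $\opmz$ with respect to $\geq$ (using the nonunital version of Remark~\mref{rem:none} in the case $\bfone\notin S$). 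Applying this equivalence to the monic subset of $\opmz$ determined by $\Phi$ yields that $\Phi$ is a convergent system of OPIs if and only if $\Phi$ is a \gsbs system of OPIs; since this holds for every system $\Phi$, the two problems have the same answer and are therefore equivalent.

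I do not expect any real obstacle here: the only point needing a word of care is the bookkeeping that both Problem~\mref{prob:rpctrs} and Problem~\mref{prob:rpcgsb} refer to the \emph{same} passage from a system of OPIs $\Phi\subseteq\opmx$ to the monic subset of $\opmz$ feeding into Lemma~\mref{lem:asgreq} — the one entering the definition of ``convergent'' and the one entering the definition of ``\gsbs system'' — so that the lemma is applied to a single $S$. Once this normalization is made explicit, the equivalence is immediate from Lemma~\mref{lem:asgreq}.
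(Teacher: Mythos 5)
Your proposal is correct and matches the intended argument: the paper (following \mcite{GG}) treats this corollary as an immediate consequence of Lemma~\mref{lem:asgreq}, exactly parallel to how Corollary~\mref{coro:rcpleq} follows from Theorem~\mref{lem:liegreq}, which is precisely what you do. The only superfluous touch is invoking Remark~\mref{rem:none}, which is not needed here (it is used later for the comparison with the Lie setting), but this does no harm.
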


\subsubsection{Relationship between \rcp between two types of algebras}
Chen and Qiu~\mcite{QC} studied the relationship between \gsbs in free operated associative algebras and \gsbs in free operated algebras with respect to the order $\ordqc$.
In fact this relationship also works with respect to an invariant monomial order $\ordq$.

For any element $[w]$ in $\opliez\subseteq\bfk \plie{Z}$,  there is a unique $w$ in $\bfk\plie{Z}$.
Based on this, we have the following equivalence.

\begin{theorem}
Let $S\subseteq\opliez\subseteq \bfk\frakS(Z)$ be monic. With an invariant monomial order $\ordq$ on $\plie{Z}$, the following statements are equivalent.
\begin{enumerate}
\item $\Pil{S}$ is convergent. \mlabel{item:aleqa}
\item $S$ is a \gsb in $\opliez$.\mlabel{item:aleqb}
\item $\Pia{S}$ is convergent.\mlabel{item:aleqc}
\item $S$ is a \gsb in $\bfk\frakS(Z)$.\mlabel{item:aleqd}
\end{enumerate}
In other words,
under the above hypothesis, the Problems~\ref{prob:rpcltrs}, ~\ref{prob:rpclgsb}, ~\ref{prob:rpctrs} and~\ref{prob:rpcgsb} are equivalent to each other with respect to the order $\ordq$.
\mlabel{thm:alrcpeq}
\end{theorem}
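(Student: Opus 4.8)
The plan is to chain together equivalences that have already been proved. The theorem asserts the equivalence of four statements about a monic subset $S\subseteq\opliez\subseteq\bfk\frakS(Z)$: convergence of $\Pil{S}$, the \gsb property for $S$ in $\opliez$, convergence of $\Pia{S}$, and the \gsb property for $S$ in $\bfk\frakS(Z)$. Three of the four links are essentially in hand. First, (\ref{item:aleqa})$\Leftrightarrow$(\ref{item:aleqb}) is exactly (part of) Theorem~\mref{lem:liegreq}, which lists convergence of $\Pil{S}$, confluence, the direct-sum decomposition, and the \gsb property in $\opliez$ as mutually equivalent. Second, (\ref{item:aleqc})$\Leftrightarrow$(\ref{item:aleqd}) is the associative-side statement: it is Lemma~\mref{lem:asgreq} (together with Remark~\mref{rem:none}, since $S$ consists of operated Lie polynomials and in particular does not contain $\bfone$), applied to the free operated associative algebra $\opmz=\bfk\frakS(Z)$. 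Third, (\ref{item:aleqb})$\Leftrightarrow$(\ref{item:aleqd}) — being a \gsb in $\opliez$ versus being a \gsb in $\bfk\frakS(Z)$ — is precisely Lemma~\mref{lem:gseq}, which was proved above by comparing compositions on both sides via Lemma~\mref{lem:aslie} and invoking the Composition-Diamond lemma for operated algebras of~\mcite{BCQ}.

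So the only work is to state the chain cleanly and cite each link. Concretely I would write: by Theorem~\mref{lem:liegreq}, (\ref{item:aleqa})$\Leftrightarrow$(\ref{item:aleqb}); by Lemma~\mref{lem:gseq}, (\ref{item:aleqb})$\Leftrightarrow$(\ref{item:aleqd}); by Lemma~\mref{lem:asgreq} with Remark~\mref{rem:none}, (\ref{item:aleqd})$\Leftrightarrow$(\ref{item:aleqc}). These three biconditionals close the loop and give the equivalence of all four statements. The final sentence of the theorem, that Problems~\mref{prob:rpcltrs}, \mref{prob:rpclgsb}, \mref{prob:rpctrs}, \mref{prob:rpcgsb} coincide under the order $\ordq$, is then immediate by unwinding the definitions of ``convergent system of OPIs/OLPIs'' and ``\GS system'' in terms of $S_\Phi$, together with Corollary~\mref{coro:rcpleq} and Corollary~\mref{coro:rcpeq}.

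The one point that genuinely needs care — and which I regard as the crux, though it has already been discharged in Lemma~\mref{lem:gseq} — is the passage between the Lie and associative worlds: a composition $\langle f,g\rangle_w$ in $\opliez$ and the corresponding composition $(f,g)_w$ in $\bfk\frakS(Z)$ differ by a combination of special normal $s$-words supported strictly below $w$ (Lemma~\mref{lem:aslie}), and the key structural input $\lbar{[u]}=u$ from Lemma~\mref{lem:ldt} guarantees that leading bracketed words of operated Lie polynomials are genuine associative Lyndon–Shirshov bracketed words, so that ``$\lbar{f}=q\suba{\lbar{s}}$'' means the same thing on both sides. Since all of this is already available, the proof of Theorem~\mref{thm:alrcpeq} itself is a short citation argument; I would keep it to a few lines and resist re-deriving the machinery.
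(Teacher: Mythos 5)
Your proposal is correct and follows the paper's own proof essentially verbatim: the paper likewise chains (a)$\Leftrightarrow$(b) via Theorem~\ref{lem:liegreq}, (c)$\Leftrightarrow$(d) via Lemma~\ref{lem:asgreq} together with Remark~\ref{rem:none}, and closes the loop with (b)$\Leftrightarrow$(d) from Lemma~\ref{lem:gseq}. Your added remarks on the final sentence and on Lemmas~\ref{lem:aslie} and~\ref{lem:ldt} are consistent with, though not required by, the paper's short citation argument.
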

\begin{proof}
By Theorem~\mref{lem:liegreq}, Lemma~\mref{lem:asgreq} and Remark~\mref{rem:none},
Item~\meqref{item:aleqa} is equivalent to Item~\meqref{item:aleqb} and Item~\meqref{item:aleqc} is equivalent to Item~\meqref{item:aleqd}.
Further by Lemma~\mref{lem:gseq} and the order $\ordq$,
Item~\meqref{item:aleqb} is equivalent to Item~\meqref{item:aleqd}.
\end{proof}

\begin{remark}
Let $S\subseteq\opliez\subseteq\bfk \plie{Z}$ be monic OLPIs.
If $S\subseteq\bfk \plie{Z}$ give a ``good" operator on an associative algebra via the classification in Problem~\mref{prob:rpctrs} or Problem~\mref{prob:rpcgsb}, then
$S\subseteq\opliez$ also give a ``good" operator on a Lie algebra via the classification in Problem~\mref{prob:rpcltrs} or Problem~\mref{prob:rpclgsb}. See next section.
\mlabel{coro:rcpasl}
\end{remark}

\section{Appliciations}
\mlabel{sec:app}
In this section, we apply Theorem~\mref{thm:alrcpeq} to find some OPLIs that are \gsbs in free \olies.
They are related to two particular classes of operators, namely, those that generalize differential or Rota-Baxter operators respectively, were studied in~\mcite{GSZ, ZGGS}.
The first one is the class of differential type operators and the second one is the class of Rota-Baxter type operators.

\subsection{Differential type OPIs and OLPIs}
We first recall the notion of differential type OPIs. 
\begin{defn}\mcite{GSZ}
A {\bf differential type OPI}, defining a differential type operator, is
$$\phi(x , y ) := \bws{xy}- N(x , y ),$$
where
\begin{enumerate}
  \item $N(x, y )$ is multi-linear in $x$ and $y$ ;
  \mlabel{item:a}
  \item $N(x, y )$ is a normal $\phi$-form, that is, $N(x , y )$  does not contain any subword of the form $\bws{uv}$, for any non-units $u,v\in \opmx$;
  \mlabel{item:b}
  \item For any set $Z$ with $u, v, w \in \opmzm\backslash\{\bfone\}$,
$N(uv,w) - N(u,vw)\astarrow_{\Pia{\phi}} 0.$
  \mlabel{item:c}
\end{enumerate}
\mlabel{defn:difftyp}
\end{defn}

\begin{remark}
Condition~\meqref{item:a} is imposed so that the operators are linear. Condition~\meqref{item:b}
is needed to avoid infinite rewriting under $\Pi_\phi$. Condition~\meqref{item:c} is needed so that the rewriting $\lc uv\rc \to N(u,v)$ is compatible with the associativity
$(uv)w=u(vw)$.
%$\bws{( uv ) w } = \bws{ u ( vw )}$.
\end{remark}

A classification of differential type OPIs was proposed in~\mcite{GSZ}.

\begin{theorem} {\em (Classification of differential type operators)}~\mcite{GSZ}
Let $a , b , c , e\in\bfk$. The OPI $\phi(x,y) := \bws{xy} - N(x ,y)$, where $N( x , y )$  is taken from the list below, is of differential type.
\begin{enumerate}
  \item $b ( x \bws{ y }+\bws{ x } y )+ c \bws{ x }\bws{ y }+ exy $ where $b^2 = b + ce$,
  \item  $ce^2 yx + exy + c \bws{ y }\bws{ x }- ce ( y \bws{ x }+\bws{ y } x )$,
  \item $\sum_{i,j\geq0}a_{ij} \bws{ \bfone }^i xy \bws{ \bfone }^j$ with the convention that $\bws{ \bfone }^0= \bfone$,
  \item $x \bws{ y }+\bws{ x } y + ax \bws{ \bfone } y + bxy,$
  \item $\bws{ x } y + a ( x \bws{ \bfone } y - xy \bws{ \bfone }),$
  \item $ x \bws{ y }+ a ( x \bws{ \bfone } y -\bws{ \bfone } xy ) $.
\end{enumerate}
\mlabel{thm:cdto}
\end{theorem}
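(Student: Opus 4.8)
The plan is to check, for each of the six expressions $N(x,y)$ in the list, the three defining conditions of Definition~\ref{defn:difftyp} for the OPI $\phi(x,y)=\bws{xy}-N(x,y)$. Conditions~\eqref{item:a} and~\eqref{item:b} are immediate by inspection: in every listed form each monomial is linear in $x$ and linear in $y$, and every bracket $\bws{\,\cdot\,}$ encloses either a single variable or a power of $\bfone$, so $N$ contains no subword $\bws{uv}$ with $u,v$ both non-units. Thus the substance of the theorem is Condition~\eqref{item:c}, namely that for a set $Z$ and non-units $u,v,w\in\opmzm\setminus\{\bfone\}$ one has $N(uv,w)-N(u,vw)\astarrow_{\Pia{\phi}}0$, where the rewriting rules of $\Pia{\phi}$ are the instances $\bws{pq}\to N(p,q)$ applied to any subword that is a bracket around a product.

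To verify Condition~\eqref{item:c} for a given $N$ I would substitute and expand $N(uv,w)$ and $N(u,vw)$. The only subwords of the form ``bracket around a product'' occurring in these are $\bws{uv}$ inside $N(uv,w)$ and $\bws{vw}$ inside $N(u,vw)$, so one application of $\bws{uv}\to N(u,v)$, respectively $\bws{vw}\to N(v,w)$, already produces a $\Pia{\phi}$-normal form: each resulting monomial is a product of $u,v,w$ interspersed with $\bws{u},\bws{v},\bws{w}$ and powers of $\bws{\bfone}$, and none contains a bracket around a product. Condition~\eqref{item:c} then holds precisely when these two normal forms agree. The bookkeeping yields: form~(3) needs no hypothesis, since already $N(uv,w)=N(u,vw)=\sum_{i,j}a_{ij}\bws{\bfone}^iuvw\bws{\bfone}^j$; form~(1) produces a difference collapsing to $(b^2-b-ce)\bigl(\bws{u}vw-uv\bws{w}\bigr)$, which vanishes exactly under the stated relation $b^2=b+ce$; and forms~(2), (4), (5), (6) cancel identically after the single rewriting step, so that the coefficient patterns displayed (for instance the $ce^2$ and $-ce$ in~(2)) are precisely those making Condition~\eqref{item:c} hold with no extra relation among the free parameters.

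The main obstacle is organizational rather than conceptual: for forms~(1) and~(2) each of $N(uv,w)$ and $N(u,vw)$ expands into roughly ten monomials, and one must match them carefully and isolate the non-cancelling remainder to read off which relation (if any) among $a,b,c,e$ is forced; a single dropped term or sign slip changes the conclusion. Two small points are worth noting. First, although Condition~\eqref{item:c} asks only for \emph{some} reduction to $0$, each side is already irreducible after the one unavoidable rewriting step, so the difference rewrites to $0$ exactly when it equals $0$ as a polynomial; and it suffices to run the computation with $u,v,w$ treated as symbols, since a symbolic difference that is zero under the relevant hypothesis remains zero after any substitution and any further reduction. Second, this verification is exactly the computational input needed downstream: once each listed $\phi$ is certified of differential type, Theorem~\ref{thm:alrcpeq} transports the corresponding operator identities to convergent rewriting systems and Gr\"obner--Shirshov bases, in the operated associative and the operated Lie settings alike.
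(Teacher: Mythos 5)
Your verification is correct, and it is the right approach: conditions (a) and (b) of Definition~\mref{defn:difftyp} are immediate, and condition (c) reduces to the one-step computation you describe; I checked the two delicate cases and your bookkeeping is accurate (for form (1) the residue is indeed $(b^2-b-ce)(\bws{u}vw-uv\bws{w})$, and form (2) cancels identically). Note, however, that this paper does not prove the statement at all: Theorem~\mref{thm:cdto} is quoted from~\mcite{GSZ}, so there is no in-paper argument to compare against; your computation is essentially the sufficiency verification carried out in that reference. Two small cautions: irreducibility after one rewriting step holds only when $u,v,w$ are treated as opaque symbols (for general words in $\opmzm\setminus\{\bfone\}$ further reductions may apply, but exhibiting one path to $0$ is all condition (c) requires, and your substitution remark handles this); and your claim that the displayed coefficient patterns are \emph{forced} is not needed for, and goes beyond, the stated theorem, whose completeness is only conjectural.
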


It was conjectured that this list includes all differential type operators.

Parallel to Definition~\mref{defn:difftyp},  we propose the following concept in the context of Lie algebras.

\begin{defn}
A {\bf differential type OLPI} is
$$\phi(x , y ) := \bws{[x y]}- [N(x , y )]\in\opliex, $$
 with $x> y$ such that
\begin{enumerate}
\item $[N(x , y )]$ is multi-linear in $x$ and $y$;
\mlabel{item:adi}
\item $[N(x , y )]$ is a normal $\phi$-form, that is, $[N(x , y )]$  does not
contain subwords $\bws{[[u][v]]}$ for any $u,v\in \alsbw{\{x,y\}}$ with $u\ord v$;
\mlabel{item:bdi}
\item
For any well-ordered set $Z$ with $u\ord v\ord w \in \alsbw{Z}$,
$$[N([[u][w]],[v])]-
[N([[u][v]],[w])]+ [N([u],[[v][w]])]\astarrow_{\Pil{S_\phi}} 0,$$
where
\begin{equation*}
 \Pil{S_\phi}:=\left\{[q\suba{\bws{u  v}}]\rightarrow R([q\suba{\phi(u , v )}]_{{\bws{u  v}}})\,|\,q\in\sopm{Z}, u, v\in \alsbw{Z}, u\ord v \right\}.
\end{equation*}
\mlabel{item:cdi}
\end{enumerate}
A linear operator on a Lie algebra satisfying a differential type OLPI is called a {\bf differential type operator} on a Lie algebra.
\mlabel{defn:ldifftyp}
\end{defn}

\begin{remark}
Condition~\meqref{item:cdi} is needed so that the rewriting
$\lc [uv]\rc \to [N(u,v)]$ is compatible with the Jacobi identity.
\end{remark}

Now we propose

\begin{problem}{\em (\rcp for Lie algebras: the differential case)} Find all OLPIs of differential
type by finding all expressions $[N ( x , y )]\in \oplie $ of differential type.
\mlabel{pro:dp}
\end{problem}

We now characterize differential type operators in terms of rewriting systems and \gsbs in the free \olies and the free operated associative algebras.

Let $Z$ be a well-ordered set. Define $\der{z}{n}\in\plie{Z}$, $n \geq 0$, recursively by
$$\der{z}{0}:=0,\, \der{z}{n+1}:= \bws{\der{z}{n}}\,\text{ for }\, m\geq0.$$
Denote
\begin{equation}
\Delta(Z):=\{\der{z}{n}\,|\,z\in Z, n\geq0\}.\mlabel{eq:derf}
\end{equation}

\begin{theorem}
Let $Z$ be a well-ordered set. Let
$$\phi(x , y ) = \bws{[x y]}- [N(x,y )] \in\opliex \subseteq \bfk\plie{\{x,y\}},$$
 with $x> y$ and $[N(x,y )]$ satisfies the conditions ~\meqref{item:adi} and~\meqref{item:bdi} in Definition~\mref{defn:ldifftyp}.
Let $\ordqd$ be an invariant monomial order satisfying $\lbar{\phi(u , v)}= \bws{u v}$ for $u\ordd v\in \alsbwo{Z}{\ordqd}$ $($such as $\geq_{\rm dt}$ given in Example~\mref{ex:inva2}$)$. The following statements are equivalent.
\begin{enumerate}
\item $\phi(x, y)= \bws{[x y]}- [N(x,y )](\in \bfk\opliex)$ is a differential type OLPI. \mlabel{item:drga}
\item The term-rewriting system induced in Eq.~\meqref{eq:ltrs}$:$
  $$\Pil{S_\phi}=\left\{\left .[q\suba{\bws{u  v}}]\rightarrow R([q\suba{\phi(u , v )}]_{{\bws{u  v}}})\,\right |\,q\in\sopm{Z}, u, v\in \alsbwo{Z}{\ordqd}, u\ordd v \right\}$$
  is convergent.\mlabel{item:drgb}

\item $S_\phi=\{\bws{[[u][v]]}-[N([u],[v])]\,|\,u, v\in \alsbwo{Z}{\ordqd}, u\ordd v\}$ is a \gsb in $\opliez$ with respect to the monomial order $\ordqd$. \mlabel{item:drgc}
\item The free $\phi$-Lie algebra on $Z$ is the free Lie algebra $\bfk\nlsbwd{Z}$ on $\Delta(Z)$ together with the operator $d$ on $\bfk\nlsbwd{Z}$.
Here $\Delta(Z)$ is given in Eq.~\meqref{eq:derf}, and $d$ on $\bfk\nlsbwd{Z}$ defined by the following recursion$:$

For any $[u]=[[u_1]\cdots [u_m]]\in \nlsbwd{Z}$ with $u_i\in \Delta(Z)$, $1\leq i \leq n$
\begin{enumerate}
  \item if $m=1$, then $[u]=\der{z}{n}\in \Delta(Z)$ for some $n\geq0$ and $z\in Z$, define $d([u]):=\der{z}{n+1}$;
  \item if $m>1$, then recursively define $d([u]):=[N([u_1],[u_2]\cdots [u_m])]$.
\end{enumerate}\mlabel{item:drgcc}
\item $S_\phi=\{\bws{uv}-N(u,v)\,|\,u,v\in\bfk\plie{Z}\}$ is a \gsb in $\bfk\plie{Z}$ with respect to the order $\ordqd$. \mlabel{item:drgd}
\item The term-rewriting system
  $$\Pia{S_\phi}=\left\{q\suba{\bws{u  v}}\rightarrow q\suba{N(u,v )}\,|\,q\in\sopm{Z}, u, v\in \bfk\plie{Z} \right \}$$
  is convergent.\mlabel{item:drge}
\item $\phi(x, y)= \bws{x y}- N(x,y )(\in \bfk\plie{\{x,y\}})$ is a differential type OPI. \mlabel{item:drgf}
\end{enumerate}
\mlabel{thm:gsbdt}
\end{theorem}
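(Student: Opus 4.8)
The plan is to split the seven conditions into the operated--associative group \meqref{item:drgd}--\meqref{item:drgf} and the operated--Lie group \meqref{item:drga}--\meqref{item:drgcc}, to note that Theorem~\mref{thm:alrcpeq} already welds the two groups together, and then to add the three links connecting them to the definitions of differential type OPI/OLPI and to the free object. Applied to the monic set $S_\phi\subseteq\opliez$, the four equivalent conditions of Theorem~\mref{thm:alrcpeq} are exactly \meqref{item:drgb}, \meqref{item:drgc}, \meqref{item:drgd} and \meqref{item:drge} (these are Problems~\mref{prob:rpcltrs}, \mref{prob:rpclgsb}, \mref{prob:rpctrs} and \mref{prob:rpcgsb} for $S_\phi$), so those four are equivalent at once; what this step still requires is the bookkeeping that identifies the presentation of $S_\phi$ used in \meqref{item:drgc} (indexed by Lyndon-Shirshov pairs $u\ordd v$), the one in \meqref{item:drgd} (indexed by arbitrary bracketed words) and the full family of instances $\phi(a,b)$, using multilinearity of $[N(x,y)]$, Lemma~\mref{lem:nassb}, the antisymmetry/Jacobi identities, and the standing hypothesis $\lbar{\phi(u,v)}=\bws{uv}$ for $u\ordd v$.

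The associative link is \meqref{item:drgf}$\Leftrightarrow$\meqref{item:drge}, which is the classification of differential type operators in~\mcite{GSZ}: the linearity and normality conditions~\meqref{item:a} and~\meqref{item:b} of Definition~\mref{defn:difftyp} (the latter matching~\meqref{item:bdi} under the Shirshov bracketing) make $\Pia{S_\phi}$ terminating and leave a single nontrivial critical pair, $N(uv,w)$ against $N(u,vw)$ from the two groupings of $\bws{uvw}=\bws{(uv)w}=\bws{u(vw)}$, so $\phi$ is a differential type OPI iff that pair joins, i.e. iff $\Pia{S_\phi}$ is convergent. The Lie link is \meqref{item:drga}$\Leftrightarrow$\meqref{item:drgb}: conditions~\meqref{item:adi} and~\meqref{item:bdi} of Definition~\mref{defn:ldifftyp} are standing hypotheses, so \meqref{item:drga} amounts to the Jacobi-compatibility condition~\meqref{item:cdi}, while by Theorem~\mref{lem:liegreq}, \meqref{item:drgb} is equivalent to $S_\phi$ being a \gsb in $\opliez$, i.e. to the triviality of all compositions of $S_\phi$. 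There are no intersection compositions among the bracket-rules (their leading words $\bws{uv}$ have breadth $1$), and by condition~\meqref{item:bdi} every including composition is either trivially joinable (one $\bws{[[u][v]]}$-redex strictly inside another, the two rewrites commuting) or of the form $\langle\phi([[u][v]],[w]),\phi([u],[[v][w]])\rangle$ with common leading monomial $\bws{[uvw]}$ ($u\ordd v\ordd w$); this last composition evaluates to $\bws{[[[u][w]][v]]}-[N([[u][v]],[w])]+[N([u],[[v][w]])]$, and reducing the $\bws{[[[u][w]][v]]}$-term via $\phi([[u][w]],[v])$ turns it into precisely the three-term relation
$$[N([[u][w]],[v])]-[N([[u][v]],[w])]+[N([u],[[v][w]])]\astarrow_{\Pil{S_\phi}}0$$
of~\meqref{item:cdi}. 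Hence \meqref{item:drga}$\Leftrightarrow$\meqref{item:drgb}.

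The free-object link is \meqref{item:drgc}$\Leftrightarrow$\meqref{item:drgcc}. From~\meqref{item:drgc}, the Composition-Diamond lemma (Lemma~\mref{lem:cdmol}) gives $\opliez=\bfk\irrl(S_\phi)\oplus\Idl(S_\phi)$ with $\irrl(S_\phi)$ a \bfk-basis of the quotient, and since $S_\phi$ generates the defining ideal that quotient is the free $\phi$-Lie algebra on $Z$. A Lyndon-Shirshov bracketed word $[w]$ is $S_\phi$-irreducible iff $w$ has no subword $\bws{uv}$ with $u,v\in\alsbw{Z}$, i.e. iff the only bracketed letters of $w$ are the towers $\der{z}{n}$; thus $\irrl(S_\phi)=\nlsbwd{Z}$ is the Lyndon-Shirshov basis of the free Lie algebra on $\Delta(Z)$, and the operator induced by $\bws{\,\cdot\,}$ on the quotient is the normal form of $\bws{[u]}$, which is $\der{z}{n+1}$ when $|u|=1$ and $[N([u_1],[u_2]\cdots[u_m])]$ when $|u|=m>1$ --- i.e. the operator $d$ of~\meqref{item:drgcc}. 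Conversely $\opliez=\bfk\irrl(S_\phi)+\Idl(S_\phi)$ holds unconditionally (Item~\meqref{item:rte} of Lemma~\mref{lem:rtsc}), so if as in~\meqref{item:drgcc} the natural map carries $\nlsbwd{Z}=\irrl(S_\phi)$ to a basis of the free $\phi$-Lie algebra, the sum is direct, whence $\Pil{S_\phi}$ is confluent and (being terminating) convergent by Theorem~\mref{lem:liegreq} --- that is,~\meqref{item:drgb}. Assembling the links establishes the equivalence of all seven statements.

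The step I expect to be the main obstacle is the composition analysis behind the Lie link: verifying that condition~\meqref{item:bdi} really leaves only the including compositions described above, that the nested-redex ones are trivially joinable, and --- above all --- pinning down the remaining composition so that its resolution is exactly the signed three-term expression of~\meqref{item:cdi}, with the argument patterns $([[u][w]],[v])$, $([[u][v]],[w])$ and $([u],[[v][w]])$ emerging from the Jacobi/Lyndon-Shirshov re-expansion $[[[u][v]][w]]=[[u][[v][w]]]+[[[u][w]][v]]$. Secondary nuisances are the dictionary between the associative and Lie normal $\phi$-form conditions (~\meqref{item:b} versus~\meqref{item:bdi}) at the associative link, and the bookkeeping of the first paragraph --- in particular checking that, once $S_\phi$ is a \gsb, its Lie ideal coincides with the defining ideal of the free $\phi$-Lie algebra (which uses that $\bws{[x,x]}=0$ forces $N$ to be antisymmetric on any $\phi$-Lie algebra).
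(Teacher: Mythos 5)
Your proposal is correct and follows essentially the same route as the paper: Theorem~\ref{thm:alrcpeq} for the equivalence of \eqref{item:drgb}, \eqref{item:drgc}, \eqref{item:drgd}, \eqref{item:drge}; the cited result of \cite{GSZ} (with Remark~\ref{rem:none}) for \eqref{item:drge}$\Leftrightarrow$\eqref{item:drgf}; the same composition analysis (no intersection compositions, two trivially joinable nested including compositions, and the $\lc uvw\rc$-overlap whose resolution after one reduction at $\lc uwv\rc$ is exactly the three-term relation of condition~\eqref{item:cdi}) for the Lie link; and Lemma~\ref{lem:cdmol} together with $\irrl(S_\phi)=\nlsbwd{Z}$ for \eqref{item:drgc}$\Leftrightarrow$\eqref{item:drgcc}. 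The only cosmetic differences are that you phrase the Lie link as \eqref{item:drga}$\Leftrightarrow$\eqref{item:drgb} rather than \eqref{item:drga}$\Leftrightarrow$\eqref{item:drgc} (interchangeable by Theorem~\ref{lem:liegreq}) and that you spell out the free-object direction the paper leaves terse.
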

\begin{proof}
By Theorem~\mref{thm:alrcpeq}, Item~\meqref{item:drgb} $\Leftrightarrow$ Item~\meqref{item:drgc} $\Leftrightarrow$ Item~\meqref{item:drgd} $\Leftrightarrow$ Item~\meqref{item:drge}.
Further by~\cite[Theorem 5.7]{GSZ} and Remark~\mref{rem:none}, Item~\meqref{item:drge} $\Leftrightarrow$ Item~\meqref{item:drgf}.

Now we prove Item~\meqref{item:drga}$\Longleftrightarrow$ Item~\meqref{item:drgc}. Note that $\lbar{\phi(x , y )} = \lc xy\rc$ by our assumption. There are no intersection compositions and only three kinds of including compositions:
\begin{eqnarray*}
&&\big\langle \phi\big(q\suba{\bws{uv}} , w\big),\phi\big(u , v \big) \big\rangle_\bws{q\suba{\bws{uv}}w}\, \text{ for }\, u\ordd v,\, q\suba{\bws{uv}}\ordd w \in \alsbwo{Z}{\ordqd},\\
&&\big\langle \phi\big(u, q\suba{\bws{vw}}\big),\phi\big(v , w \big) \big\rangle_\bws{uq\suba{\bws{vw}}}\, \text{ for }\, v\ordd w,\, u\ordd q\suba{\bws{vw}} \in \alsbwo{Z}{\ordqd},\\
&&\big\langle \phi\big(uv , w\big),\phi\big(u ,vw \big) \big\rangle_\bws{uvw}\, \text{ for }\, u\ordd v\ordd w \in \alsbwo{Z}{\ordqd}.
\end{eqnarray*}
The trivialities of the first two intersection compositions follow from
\begin{eqnarray*}
&&\big\langle \phi\big(q\suba{\bws{uv}} , w\big),\phi\big(u , v \big) \big\rangle_\bws{q\suba{\bws{uv}}w}\\
&=&\phi\big(q\suba{\bws{uv}} , w\big)- [\bws{[[q\suba{\bws{uv}}][w]]}]_{\bws{uv}}\\
&=&-[N([q\suba{\bws{uv}}],[w] )]+[\bws{[[q\suba{N(u,v)}][w]]}]\\
&\equiv &-[N([q\suba{N(u,v)}],[w] )]+[N([q\suba{N(u,v)}],[w])]\\
&\equiv&0 \mod(S, \bws{q\suba{\bws{uv}}w} )
\end{eqnarray*}
and
\begin{eqnarray*}
&&\big\langle \phi\big(u, q\suba{\bws{vw}}\big),\phi\big(v , w \big) \big\rangle_\bws{uq\suba{\bws{vw}}}\\
&=&\phi\big(u , q\suba{\bws{vw}}\big)- [\bws{[[u][q\suba{\bws{vw}}]]}]_{\bws{vw}}\\
&=&-[N([u],[q\suba{\bws{vw}}] )]+[\bws{[[u][q\suba{N(v,w)}]]}]\\
&\equiv &-[N([u],[q\suba{N(v,w])}] )]+[N([u],[q\suba{N(v,w)}])]\\
&\equiv&0 \mod(S, \bws{uq\suba{\bws{vw}}} ).
\end{eqnarray*}
For the third one, we have
\begin{eqnarray*}
&&\langle \phi\big(uv , w\big) ,\phi\big(u ,vw \big) \rangle_\bws{uvw}\\
&=& [\phi\big(uv , w\big)]_{\bws{uv}} - [\phi\big(u ,vw \big)]_{\bws{vw}}\\
&=&\bws{[[u][w]][v]}-[N([[u][v]],[w])]+ [N([u],[[v][w]])].
\end{eqnarray*}
Then $S$ is a \gsb if and only if the composition
$$\big\langle \phi\big(uv , w\big),\phi\big(u ,vw \big) \big\rangle_\bws{uvw}=\bws{[[u][w]][v]}-[N([[u][v]],[w])]+ [N([u],[[v][w]])]$$
is trivial modulo $(S, \bws{uvw})$, that is
\begin{equation}
\bws{[[u][w]][v]}-[N([[u][v]],[w])]+ [N([u],[[v][w]])]=\sum_i \alpha_i[q_i| _{s_i}]_{\lbar{s_i}},\mlabel{eq:dirw}
\end{equation}
where each $ \alpha_i\in\bfk$, $q_i\in\sopm{Z}$, $s_i\in S$ and $[q_i\suba{s_i}]_{\lbar{s_i}}<_{dt}\bws{uvw}$.
Since $$\bws{[[u][w]][v]}-[N([[u][w]],[v])]=[\bws{uwv}]_{uw}=[\bws{\star v\suba{uw}}]_{\lbar{\phi{(u,w)}}},$$
and $\bws{uwv}<_{dt}\bws{uvw}$, Eq.~\meqref{eq:dirw} transforms into
\begin{eqnarray*}
&&\bws{[[u][w]][v]}-[N([[u][v]],[w])]+ [N([u],[[v][w]])]\\
&=&\Big([N([[u][w]],[v])]+[\bws{\star v\suba{uw}}]_{\lbar{\phi{(u,w)}}}\Big)-[N([[u][v]],[w])]+ [N([u],[[v][w]])]\\
&=&\sum_i \alpha_i[q_i| _{s_i}]_{\lbar{s_i}}.
\end{eqnarray*}
Notice that this condition is equivalent to that $$[N([[u][w]],[v])]-[N([[u][v]],[w])]+ [N([u],[[v][w]])]$$ is reduced to zero
by the rewriting system $\Pil{S}$ defined in Eq.~\meqref{eq:ltrs}, and is equivalent to that $\phi(x , y )$ is of differential type.

Item~\meqref{item:drgc}$\Longleftrightarrow$ Item~\meqref{item:drgcc} follows from Lemma~\mref{lem:cdmol} and $\irrl(S)=\nlsbwd{Z}$.
\end{proof}

\begin{remark}
In Theorem~\mref{thm:gsbdt}, the invariant monomial order can taken to be $\geq_{\rm dt}$~\cite{GSZ},
but can't to be $\ordqc$~\cite{QC}.
For example, for $\phi(u,v) = \lc [u v] \rc - [\lc u\rc v] - [u\lc v\rc]$ with $u\ordc v$,
we have $\lbar{\phi(u,v)} = \bws{u} v$, as $\bws{u} v\ordc u\bws{ v}\ordc \bws{u v}$.
\mlabel{re:last1}
\end{remark}

\begin{defn}
The {\bf operator degree} ${\rm deg}_P(u)$ of a monomial $u$ in $\oplie$ is the total number that the operator
$\bws{\, }$ appears in the monomial. The {\bf operator degree } of a polynomial $f$ in $\oplie$ is the maximum of the
operator degrees of the monomials appearing in $f$.
\mlabel{defn:olied}
\end{defn}

Next we try to find all differential type OLPIs under a restriction on the number of operators.
Since $\bfone$ is not in $\oplie$, we need to remove the cases for associative algebras involving $\bfone$ in Theorem~\mref{thm:cdto}
and propose the following answer to the Problem~\mref{pro:dp} in the case of operator degree  not exceeding two.

\begin{conjecture}
{\em (Classification of differential type OLPIs)}
Let $a , b , c , e\in\bfk$. Every expressions $[N ( x , y )]\in \opliex $ of differential type takes one (or more) of the forms below for $x> y$
\begin{enumerate}
  \item $ \ b \big( [x \bws{ y }]+[\bws{ x } y ]\big)+ c [\bws{ x }\bws{ y }]+ e[xy] $ where $b^2 = b + ce$,
  \item  $\ ce^2 [yx] + e[xy] + c [\bws{ y }\bws{ x }]- ce \big( [y \bws{ x }]+[\bws{ y } x] \big)$.
\end{enumerate}
\mlabel{thm:cdtol}
\end{conjecture}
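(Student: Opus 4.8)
The plan is to derive Conjecture~\mref{thm:cdtol} from the classification of differential type OPIs on associative algebras (Theorem~\mref{thm:cdto}) by means of the transfer results Theorems~\mref{thm:alrcpeq} and~\mref{thm:gsbdt}. Fix the invariant monomial order $\geq_{\rm dt}$ of Example~\mref{ex:inva2}; any invariant monomial order with $\overline{\phi(u,v)}=\bws{uv}$ works, and by Remark~\mref{re:last1} the order $\ordqc$ does \emph{not}, so choosing the order correctly is part of the setup. The transfer is governed by the dictionary that sends a formal expression $\bws{[xy]}-[N(x,y)]$ over $\opliex$ (the inner symbols $[\,\cdot\,]$ being read as Lie bracketings, resolved into non-associative Lyndon--Shirshov bracketed words by Lemma~\mref{lem:nassb}) to the associative expression $\bws{xy}-N(x,y)$ over $\bfk\plie{\{x,y\}}$ obtained by erasing the outer brackets. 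By Theorem~\mref{thm:gsbdt}, whenever $[N(x,y)]$ is multi-linear in $x,y$ and a normal $\phi$-form (conditions~\meqref{item:adi} and~\meqref{item:bdi} of Definition~\mref{defn:ldifftyp}), the OLPI $\bws{[xy]}-[N(x,y)]$ is of differential type if and only if the OPI $\bws{xy}-N(x,y)$ is of differential type. Since $\opliex$ is non-unital, no monomial of $[N(x,y)]$, and hence none of its associative shadow, involves $\bfone$ or $\bws{\bfone}$. Thus classifying differential type OLPIs of operator degree at most $2$ is the same as classifying differential type OPIs $\bws{xy}-N(x,y)$ of operator degree at most $2$ in which $\bfone$ does not occur.

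\emph{Occurrence of the two forms.} Take $N(x,y)$ to be form~(1) or form~(2) of Theorem~\mref{thm:cdto} (with the relation $b^2=b+ce$ in the first case). I would form the corresponding bracketed element $[N(x,y)]\in\opliex$ --- the expression exhibited in Conjecture~\mref{thm:cdtol}(1), (2) --- and verify that it is a well-defined element of $\opliex$, is multi-linear in $x,y$, is a normal $\phi$-form in the sense of Definition~\mref{defn:ldifftyp}\meqref{item:bdi}, and has leading bracketed word $\bws{xy}$ under $\geq_{\rm dt}$, so that Theorem~\mref{thm:gsbdt} is applicable. Because forms~(1) and~(2) of Theorem~\mref{thm:cdto} are differential type OPIs, Theorem~\mref{thm:gsbdt} then gives that $\bws{[xy]}-[N(x,y)]$ is a differential type OLPI. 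Hence both forms listed in the conjecture genuinely occur.

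\emph{Completeness.} Conversely, suppose $[N(x,y)]\in\opliex$ has operator degree at most $2$, is multi-linear in $x,y$, is a normal $\phi$-form, and $\bws{[xy]}-[N(x,y)]$ is of differential type. By Theorem~\mref{thm:gsbdt} its associative shadow $\bws{xy}-N(x,y)$ is a differential type OPI of operator degree at most $2$ in which $\bfone$ does not appear. Invoking the conjectured completeness of Theorem~\mref{thm:cdto}, $N(x,y)$ is, up to scaling and the symmetry exchanging the two arguments, one of the six listed forms; but form~(3) has unbounded operator degree and contains $\bfone$, while forms~(4), (5), (6) each contain $\bws{\bfone}$, so none of~(3)--(6) can be $\bfone$-free. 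Therefore $N(x,y)$ reduces to form~(1) or form~(2), and transporting this back along the dictionary (using that the shadow map is injective on $\opliex$) identifies $[N(x,y)]$ with one of the two expressions of Conjecture~\mref{thm:cdtol}.

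\emph{Principal obstacle.} Completeness rests on the still-open completeness of Theorem~\mref{thm:cdto}, so what one can prove unconditionally is only ``the associative classification is complete in operator degree $\le 2$ among the $\bfone$-free forms $\Longrightarrow$ Conjecture~\mref{thm:cdtol}''. Beyond that, the delicate point --- and the one most likely to require genuine care --- is making the bracketing dictionary precise and checking that forms~(1), (2) on the Lie side really do match forms~(1), (2) of Theorem~\mref{thm:cdto} on the associative side: under $\geq_{\rm dt}$ one has $\bws{y}\succ x$, so several of the associative monomials written in the conjecture (such as $x\bws{y}$, $y\bws{x}$ and $yx$) are not Lyndon--Shirshov and must be re-expressed as signed Lyndon--Shirshov bracketed words via Lemma~\mref{lem:nassb}, and one must confirm that this is consistent with the purely syntactic ``erase outer brackets'' correspondence built into Theorem~\mref{thm:gsbdt}, as well as with the matching of the forbidden subwords $\bws{[[u][v]]}$ of Definition~\mref{defn:ldifftyp}\meqref{item:bdi} with the forbidden subwords $\bws{uv}$ of Definition~\mref{defn:difftyp} that defines the normal $\phi$-form condition on each side. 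The bookkeeping of multilinearity and operator degree, and the elimination of forms~(3)--(6), are routine.
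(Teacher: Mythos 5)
The statement you are proving is presented in the paper as a conjecture, not a theorem: the paper proves only the ``occurrence'' half, in the corollary immediately following it, and does so by exactly your route (Theorems~\mref{thm:cdto} and~\mref{thm:gsbdt} plus $\bfone\notin\oplie$). So your first paragraph matches the paper, and your admission that completeness rests on the still-open completeness of Theorem~\mref{thm:cdto} is the honest and unavoidable caveat. The genuine gap is in your elimination step. You assert that forms (3)--(6) of Theorem~\mref{thm:cdto} ``cannot be $\bfone$-free'', but the parameters $a$ and $a_{ij}$ there are arbitrary scalars and may vanish. Form (3) with only $a_{00}\neq 0$ gives $exy$ and form (4) with $a=0$ gives $x\bws{y}+\bws{x}y+bxy$, both absorbed into form (1); but form (5) with $a=0$ gives $N(x,y)=\bws{x}y$ and form (6) with $a=0$ gives $N(x,y)=x\bws{y}$, which are $\bfone$-free differential type OPIs whose Lie counterparts are not of either conjectured form, since forms (1) and (2) force equal coefficients on the pair $[x\bws{y}]$, $[\bws{x}y]$ (equivalently, after antisymmetry, on $[y\bws{x}]$, $[\bws{y}x]$). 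Indeed $\phi(x,y)=\bws{[xy]}-[\bws{x}y]$ satisfies Definition~\mref{defn:ldifftyp} directly: conditions (a) and (b) are immediate, and condition~\meqref{item:cdi} becomes, after one rewriting step in each of the first two terms, $[[\bws{[u]}[w]][v]]-[[\bws{[u]}[v]][w]]+[\bws{[u]}[[v][w]]]$, which is zero by the Jacobi identity. So even granting the associative completeness conjecture, your argument does not yield the statement; it in fact exhibits a differential type OLPI outside the two listed forms, which your case analysis silently discards.

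A secondary, unresolved point is the one you yourself flag: the identification of the Lie-side datum $[N(x,y)]$ with an associative $N(x,y)$ is not the naive ``erase the outer brackets'' map. Under $\geq_{\rm dt}$ one has $\bws{y}>x$, so monomials such as $x\bws{y}$, $y\bws{x}$, $yx$ appearing in the conjectured forms are not associative Lyndon--Shirshov bracketed words; the commutator expansion of $[x\bws{y}]$ has leading term $\bws{y}x$, and the hypotheses of Theorem~\mref{thm:gsbdt} (leading word $\bws{uv}$, the normal $\phi$-form conditions on each side) must be verified for each candidate after re-expression in the Lyndon--Shirshov basis. In your write-up this verification is announced but not carried out, so the transfer step in both directions remains incomplete even before the counterexample above is taken into account.
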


By Theorems~\mref{thm:cdto} and~\mref{thm:gsbdt} and the fact that $\bfone\notin\oplie$, we have

\begin{corollary}
Let $[{N(x , y)}]\in \opliex \subseteq \bfk\plie{\{x,y\}}$ be from the list in Conjecture~\mref{thm:cdtol}. Then all the statements in Theorem~\mref{thm:gsbdt} hold.
\end{corollary}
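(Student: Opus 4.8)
The plan is to read the Corollary off from Theorem~\ref{thm:cdto} and Theorem~\ref{thm:gsbdt}, since by construction the two members of the list in Conjecture~\ref{thm:cdtol} are nothing but the two entries of the list in Theorem~\ref{thm:cdto} that involve no $\bfone$ --- namely entries~(1) and~(2) --- each associative monomial $w$ being replaced by the corresponding bracketed Lie monomial $[w]$. (We use no part of the conjectural completeness of that list, only that each of its two expressions is of differential type.) Thus, starting from $[N(x,y)]$ in the list, let $N(x,y)\in\bfk\plie{\{x,y\}}$ denote the associative expression obtained by undoing this replacement: form~(1) yields $b\big(x\bws{y}+\bws{x}y\big)+c\bws{x}\bws{y}+exy$ with $b^2=b+ce$, and form~(2) yields $ce^2yx+exy+c\bws{y}\bws{x}-ce\big(y\bws{x}+\bws{y}x\big)$, which are precisely entries~(1) and~(2) of Theorem~\ref{thm:cdto}. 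As those entries do not contain $\bfone$, this $N(x,y)$ really is a bracketed word expression over $\{x,y\}$, and Theorem~\ref{thm:cdto} asserts that $\phi(x,y)=\bws{xy}-N(x,y)$ is a differential type OPI; this is precisely statement~\ref{item:drgf} of Theorem~\ref{thm:gsbdt}.

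To apply Theorem~\ref{thm:gsbdt} it remains to check that $\phi(x,y)=\bws{[xy]}-[N(x,y)]$ meets the theorem's standing hypotheses, i.e.\ conditions~\ref{item:adi} and~\ref{item:bdi} of Definition~\ref{defn:ldifftyp}, and that a suitable invariant monomial order exists. Condition~\ref{item:adi} (multilinearity in $x$ and $y$) is immediate: in either form every monomial is a scalar multiple of a bracketed word containing exactly one $x$ and one $y$. For condition~\ref{item:bdi} (normal $\phi$-form) one inspects the monomials of $[N(x,y)]$: the operator $\bws{\ }$ occurs only as $\bws{x}$ or $\bws{y}$, applied to a single letter of $\{x,y\}$, so no monomial contains a subword $\bws{[[u][v]]}$ with $u\ordd v$, and $[N(x,y)]$ is a normal $\phi$-form. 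Finally, the order $\geq_{\rm dt}$ of Example~\ref{ex:inva2} is an invariant monomial order $\ordqd$ satisfying $\lbar{\phi(u,v)}=\bws{uv}$ for $u\ordd v$ --- this is exactly the order highlighted in the hypotheses of Theorem~\ref{thm:gsbdt} and is verified in~\cite{GSZ} (informally, $\geq_{\rm dt}$ places $\bws{\ }$ applied to a longer argument above $\bws{\ }$ applied to a shorter one, and both above bracket-free words, so among the monomials $\bws{uv},\bws{u}v,u\bws{v},\bws{u}\bws{v},uv,vu$ of $\phi(u,v)$ the word $\bws{uv}$ is largest).

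Granting these, Theorem~\ref{thm:gsbdt} yields the equivalence of its statements~\ref{item:drga}, \ref{item:drgb}, \ref{item:drgc}, \ref{item:drgcc}, \ref{item:drgd}, \ref{item:drge}, \ref{item:drgf}; since statement~\ref{item:drgf} holds by the first paragraph, all of them hold, which is the content of the Corollary. I do not anticipate a genuine difficulty: the mathematics is entirely supplied by Theorem~\ref{thm:cdto} together with the associative--Lie dictionary of Theorems~\ref{thm:alrcpeq} and~\ref{thm:gsbdt}. The only step that needs attention is the bookkeeping in the first paragraph --- matching the two conjectural Lie forms against entries~(1) and~(2) of Theorem~\ref{thm:cdto} and confirming that no entry involving $\bfone$ is in play (such an entry would leave $\oplie$) --- plus the routine checks of conditions~\ref{item:adi}, \ref{item:bdi} and of the leading-monomial property of $\geq_{\rm dt}$.
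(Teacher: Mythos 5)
Your proposal is correct and follows essentially the same route as the paper, which simply invokes Theorem~\mref{thm:cdto}, Theorem~\mref{thm:gsbdt} and the fact that $\bfone\notin\oplie$; you merely make explicit the bookkeeping the paper leaves implicit (matching the two Lie forms with entries (1) and (2) of Theorem~\mref{thm:cdto}, and verifying conditions~\meqref{item:adi}, \meqref{item:bdi} and the leading-monomial property of $\geq_{\rm dt}$).
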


\subsection{Rota-Baxter type OPIs and OLPIs}
Similar to the case of differential type, the classification of Rota-Baxter type OPIs was also studied.

\begin{defn}\mcite{ZGGS}
A {\bf Rota-Baxter type OPI}, defining a Rota-Baxter type operator, is
$$\phi(x , y ) := \bws{x}\bws{ y}- \bws{B(x , y )},$$
where
\begin{enumerate}
  \item $\bws{B(x, y )}$ is multi-linear in $x$ and $y$ ;
  \mlabel{item:ar}
  \item $\bws{B(x , y )}$ is a normal $\phi$-form, that is, $B(x , y )$  does not contain any subword of the form $\bws{u}\bws{v}$, for any $u,v\in \opmx$;
  \mlabel{item:br}
   \item The term-rewriting system $\Pi_\phi$ is terminating;
  \item For any set $Z$ with $u, v, w \in \opmzm$,
${B(B(u,v) , w )} - {B(u , B(v,w) )}\astarrow_{\Pia{\phi}} 0.$
  \mlabel{item:cr}
\end{enumerate}
\mlabel{defn:rbtyp}
\end{defn}
For Rota-Baxter type operators with low operator orders, we have 
\begin{theorem}\mcite{ZGGS} {\em (Classification of Rota-Baxter type operators)}
For any $c,\lambda\in\bfk$, $\phi := \bws{x}\bws{y} - \bws{B(x , y)}$, where ${B(x , y)}$  is taken from the list below, is of Rota-Baxter type.
\begin{enumerate}
  \item  $x\bws{ y }$ (average operator),
  \item  $\bws{ x } y$ (inverse average operator),
  \item  $ x \bws{ y }+ y \bws{ x },$
  \item  $\bws{ x } y +\bws{ y } x,$
  \item  $x \bws{ y }+\bws{ x } y -\bws{ xy }$ (Nijenhuis operator),
  \item  $x \bws{ y }+\bws{ x } y +\lambda xy$ (Rota-Baxter operator of weight $\lambda$ ),
  \item  $x \bws{ y }- x \bws{ \bfone } y +\lambda xy,$
  \item  $\bws{ x } y - x \bws{ \bfone } y +\lambda xy,$
  \item  $ x \bws{ y }+\bws{ x } y - x \bws{ \bfone } y +\lambda xy$ (generalized Leroux TD operator with weight $\lambda$ ),
  \item  $ x \bws{ y }+\bws{ x } y - xy \bws{ \bfone }- x \bws{ \bfone } y +\lambda xy, $
  \item  $ x \bws{ y }+\bws{ x } y - x \bws{ \bfone } y -\bws{ xy }+\lambda xy,$
  \item  $ x \bws{ y }+\bws{x } y - x \bws{ \bfone } y -\bws{ \bfone } xy +\lambda xy,$
  \item  $cx \bws{ \bfone } y +\lambda xy$ (generalized endomorphisms),
  \item  $cy \bws{ \bfone } x +\lambda yx$ (generalized antimorphisms).
\end{enumerate}
 \mlabel{thm:rbtyp}
\end{theorem}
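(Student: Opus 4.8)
The plan is to verify, family by family, the four conditions in Definition~\mref{defn:rbtyp} defining a Rota-Baxter type OPI. The first two are immediate by inspection of the list: each $B(x,y)$ is visibly linear in $x$ and linear in $y$, giving condition~\meqref{item:ar}; and no monomial occurring in any of the fourteen expressions $B(x,y)$ contains a factor $\bws{u}\bws{v}$, so $\bws{B(x,y)}$ is a normal $\phi$-form, giving condition~\meqref{item:br} and making the rewriting rule $\bws{u}\bws{v}\to B(u,v)$ well posed.

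For the termination condition (the unlabeled third condition), I would fix one monomial order $\geq$ on $\opmzm$ that simultaneously witnesses $\lbar{\phi}=\bws{x}\bws{y}$ for all fourteen families, that is, under which $\bws{u}\bws{v}$ strictly dominates every monomial of $B(u,v)$ for all $u,v$; the order used in~\mcite{ZGGS}, which first weights a bracketed word by the number of occurrences of the operator $\bws{\ }$ and then breaks ties degree-lexicographically, has this property. One then checks that a one-step rewriting $q\suba{\bws{u}\bws{v}}\to q\suba{B(u,v)}$ strictly lowers the leading monomial with respect to $\geq$, so by well-foundedness no infinite chain exists. The subtle point is that substituting bracketed words for $u,v$ might a priori create new adjacent operator boxes; the ``operator-count-first'' feature of the order is precisely what still forces a strict decrease.

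The substantive part is condition~\meqref{item:cr}: for each of the fourteen choices of $B$ one must show $D:=B(B(u,v),w)-B(u,B(v,w))\astarrow_{\Pia{\phi}}0$ for all $u,v,w\in\opmzm$. The uniform recipe is to expand $D$ by bilinearity of $B$ into a finite $\bfk$-combination of bracketed monomials in $u,v,w$, then repeatedly apply $\bws{a}\bws{b}\to B(a,b)$ to every adjacent operator pair, normalize, and check that the result is $0$. For the Rota-Baxter operator of weight $\lambda$ (item~(f)) this is the classical identity underlying the associativity of the double Rota-Baxter product; items (a)--(e) are specializations or small variants and are shorter; the families (g)--(n), which carry a unit term $\bws{\bfone}$, additionally require tracking how $\bws{\bfone}$ behaves under substitution, but the computations stay elementary once the bilinear expansion is written out. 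Equivalently, by Lemma~\mref{lem:asgreq} one may phrase condition~\meqref{item:cr} as the triviality of the self-compositions of $\phi$, that is, as $S_\phi$ being a \gsb; the general theory reduces those compositions to exactly the relation in condition~\meqref{item:cr}, so the two routes involve the same core computation.

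The main obstacle is this case-by-case verification of condition~\meqref{item:cr}: there is no uniform one-line argument, and the fourteen computations --- especially items (i)--(l) (the generalized TD-type operators) and items (m),(n) (the generalized (anti)morphisms) where $\bws{\bfone}$ intervenes --- are where essentially all the work lies. A secondary, minor obstacle is pinning down a single monomial order that at once gives leading term $\bws{x}\bws{y}$ and termination for every family, rather than tailoring the order case by case; once such an order is fixed the remainder is mechanical. These verifications are carried out in full in~\mcite{ZGGS}.
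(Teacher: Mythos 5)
Your proposal is correct and follows essentially the same route as the paper: the paper gives no proof of Theorem~\mref{thm:rbtyp}, importing it verbatim from~\mcite{ZGGS}, and your plan --- verify conditions (a)--(d) of Definition~\mref{defn:rbtyp} family by family, with the real work being the fourteen case-by-case checks of condition~(d), all carried out in~\mcite{ZGGS} --- is precisely that source's argument. Two small repairs to your sketch: the rewriting rule keeps the outer operator, $q\suba{\bws{u}\bws{v}}\to q\suba{\bws{B(u,v)}}$, so the number of occurrences of $\bws{\ }$ does \emph{not} decrease under a rewriting step and an ``operator-count-first'' rationale alone cannot force termination; instead one fixes a monomial order on $\opmzm$ in which $\bws{x}\bws{y}$ is the leading monomial of every $\phi$ in the list (as in~\mcite{ZGGS}, and exactly as the Lie-side analogue is handled here via Lemma~\mref{lem:termi} together with the hypothesis $\lbar{\phi(u,v)}=\bws{u}\bws{v}$ in Theorem~\mref{thm:gsbrbt}), after which each one-step rewriting strictly lowers the monomial being rewritten and well-orderedness yields termination.
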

This list of Rota-Baxter type operators may not be complete. See~\mcite{ZGGS} for more details.
Next we turn to Rota-Baxter type OLPIs.

\begin{defn}
\mlabel{defn:lrbtyp}
A {\bf Rota-Baxter type OLPI} is
a $$\phi(x , y ) :=[ \bws{x}\bws{ y}]- \bws{[B(x , y )]}\in\opliex$$
with $x> y$ such that
\begin{enumerate}
  \item ${[B(x , y )]}$ is multi-linear in $x$ and $y$ ;
  \mlabel{item:arrb}
  \item ${[B(x , y )]}$ is a normal $\phi$-form, that is, $[B(x , y )]$  does not contain the subword  $[\bws{[u]}\bws{[v]}]$ for any $u,v\in \alsbw{X}$ with $u\ord v$;
  \mlabel{item:brrb}
   \item The term-rewriting system
   $$\Pil{S_\phi}=\big\{[q\suba{\bws{u}\bws{  v}}]\rightarrow [R([q\suba{\phi(u , v )}]_{{\bws{u}\bws{  v}}})]\,|\,q\in\sopm{Z}, u, v\in \alsbwo{Z}{\ordq}, u\ord v \big \}$$ is terminating; \mlabel{item:drrb}
  \item  \mlabel{item:crrb}
For any well-ordered set $Z$ with $u\ord v\ord w \in \alsbw{Z}$,
$$[B([B([u],[w])],[v])]-[B([B([u],[v])],[w])]+ [B([u],[B([v],[w])])]\astarrow_{\Pil{S_\phi}} 0,$$
where $$\Pil{S_\phi}=\big\{[q\suba{\bws{u}\bws{  v}}]\rightarrow R([q\suba{\phi(u , v )}]_{{\bws{u}\bws{  v}}})\,|\,q\in\sopm{Z}, u, v\in \alsbwo{Z}{\ordq}, u\ord v \big \}.$$
\end{enumerate}
A linear operator on a Lie algebra that satisfies a Rota-Baxter type OLPI is called a {\bf Rota-Baxter type operator} on a Lie algebra.
\end{defn}

\begin{remark}
The condition~\meqref{item:cr} in Definition~\mref{defn:rbtyp} is to ensure $(\bws{u}\bws{v}) \bws{w} = \bws{u}(\bws{v} \bws{w})$;
while the above condition~\meqref{item:crrb} is for the consistence with the Jacobi identity
$$[[\bws{[u]}\bws{ [v]}] \bws{[w] } ]= [\bws{ [u]} [\bws{[v]}\bws{[w] }]] + [[\bws{[u]}\bws{[w]}] \bws{[v]}].$$
\end{remark}

\begin{problem} {\em (\rcp for Lie algebras: the Rota-Baxter case)} Find all OLPIs of Rota-Baxter
type by finding all expressions ${[B ( x , y )]}\in \opliex $ of Rota-Baxter type.
\mlabel{pro:rbp}
\end{problem}

We give some criteria for Rota-Baxter type OLPIs.

\begin{theorem} Let $Z$ be a well-ordered set.
Let
$$\phi(x , y ) = [\bws{x}\bws{ y}]- \bws{[B(x , y )]}\in \opliex \subseteq \bfk\plie{\{x,y\}},$$
with $x> y$ and $\phi(x,y)$ satisfies the conditions~\meqref{item:arrb} and~\meqref{item:brrb} in Definition~\mref{defn:lrbtyp}.
Let $\ordqb$ be an invariant monomial order satisfying $\lbar{\phi(u , v)}= \bws{u}\bws{ v}$ for $u\ordb v\in \alsbwo{Z}{\ordqb}$ $($such as $\ordqc$ in Example~\mref{ex:inva1}$)$.
The following statements are equivalent.
\begin{enumerate}
  \item $\phi(x, y) = [\bws{x}\bws{ y}]- \bws{[B(x , y )]}\in \opliex$ is a Rota-Baxter type OLPI.\mlabel{item:rdrga}
  \item The term-rewriting system
  $$\Pil{S_\phi}=\left\{\left .[q\suba{\bws{u}\bws{  v}}]\rightarrow R([q\suba{\phi(u , v )}]_{{\bws{u}\bws{  v}}})\,\right|\,q\in\sopm{Z}, u, v\in \alsbwo{Z}{\ordqb}, u\ordb v \right \}$$
  is convergent.\mlabel{item:rdrgb}
  \item The set $S_\phi=\left\{ \left .[\bws{[u]}\bws{ [v]}]- \bws{[B([u] , [v] )]}\,\right|\,u, v\in \alsbwo{Z}{\ordqb}, u\ordb v\right \}$ is a \gsb in $\opliez$ with respect to the order $\ordqb$.\mlabel{item:rdrgc}
  \item The set
  $$\irrl(S)\coloneqq \left\{\left .[w]\,\right|\, w\in\alsbwo{Z}{\ordqb}, w\neq q\suba{{{\bws{u}\bws{  v}}}}\,\text{ for }\, q\in\sopm{Z}, u, v\in \alsbwo{Z}{\ordqb}, u\ordb v\right\}$$
  is a linear basis of the free $\phi$-Lie algebra $\opliez/ \Idl(S).$\mlabel{item:rdrgcc}
  \item $S_\phi=\{  \bws{u}\bws{ v}- \bws{B(u , v )}\,|\,u,v\in\bfk\plie{Z} \}$ is a \gsb in $\bfk\plie{Z}$ with respect to the order $\ordqb$. \mlabel{item:rdrgd}
   \item The term-rewriting system
  $$\Pia{S_\phi}=\big\{q\suba{\bws{u  v}}\rightarrow q\suba{\bws{B(u,v )}}\,|\,q\in\sopm{Z}, u, v\in \bfk\plie{Z} \big \}$$
  is convergent.\mlabel{item:rdrge}
   \item $\phi(x, y) = \bws{x}\bws{ y}- \bws{B(x , y )}(\in \bfk\plie{\{x,y\}})$ is of Rota-Baxter type OPI. %\lir{without idenitity?} \hu{Yes.}
\mlabel{item:rdrgf}
\end{enumerate}
\mlabel{thm:gsbrbt}
\end{theorem}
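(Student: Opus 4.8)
The plan is to mimic, almost verbatim, the proof of Theorem~\mref{thm:gsbdt}, the one structural difference being that the leading bracketed word $\bws{u}\bws{v}$ of $\phi(\bws{u},\bws{v})$ has breadth two rather than breadth one, so the Jacobi-compatibility condition will be carried by an \emph{intersection} composition instead of an including composition. First the ``machinery'' equivalences. Since $\ordqb$ is an invariant monomial order and $S_\phi\subseteq\opliez$ is monic, Theorem~\mref{thm:alrcpeq} applied to $S=S_\phi$ yields at once the equivalence of Items~\meqref{item:rdrgb}, \meqref{item:rdrgc}, \meqref{item:rdrgd} and~\meqref{item:rdrge} (the set $S_\phi$ of Item~\meqref{item:rdrgc}, written with non-associative Lyndon--Shirshov bracketed arguments, and the set $S_\phi$ of Item~\meqref{item:rdrgd}, written with associative arguments, define the same subset of $\opliez\subseteq\bfk\plie{Z}$). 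The equivalence of Items~\meqref{item:rdrge} and~\meqref{item:rdrgf} follows from the characterization of Rota-Baxter type OPIs proved in~\mcite{ZGGS} together with the nonunital form of the Composition--Diamond lemma (Remark~\mref{rem:none}), since $\bfone\notin\opliez$. Finally, Item~\meqref{item:rdrgc}$\Leftrightarrow$Item~\meqref{item:rdrgcc} is immediate from the Composition--Diamond Lemma~\mref{lem:cdmol} for \olies, once one observes that $\irrl(S_\phi)$ is exactly the set displayed in Item~\meqref{item:rdrgcc} and that $\opliez/\Idl(S_\phi)$ is the free $\phi$-Lie algebra on $Z$ (using multilinearity of $\bws{[B(x,y)]}$ and Lemma~\mref{lem:nassb} to see that $\Idl(S_\phi)$ is the operated Lie ideal generated by all evaluations of $\phi$).

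It remains to prove Item~\meqref{item:rdrga}$\Leftrightarrow$Item~\meqref{item:rdrgc}. Conditions~\meqref{item:arrb} and~\meqref{item:brrb} are standing hypotheses, and condition~\meqref{item:drrb} (termination of $\Pil{S_\phi}$) holds automatically by Lemma~\mref{lem:termi}, as $\ordqb$ is an invariant monomial order; hence $\phi$ is a Rota-Baxter type OLPI if and only if condition~\meqref{item:crrb} holds. On the side of Item~\meqref{item:rdrgc}, since $\lbar{\phi(\bws{u},\bws{v})}=\bws{u}\bws{v}$ has breadth two with $\bws{u}\succ\bws{v}$ whenever $u\ordb v$, the only compositions among elements of $S_\phi$ are: the trivial total-overlap including composition $\phi(\bws{u},\bws{v})-\phi(\bws{u},\bws{v})=0$; the including compositions in which the leading bracketed word $\bws{a}\bws{b}$ of one generator lies strictly inside the $\bws{\,\cdot\,}$-argument $u$ or $v$ of the leading bracketed word $\bws{u}\bws{v}$ of another; and the intersection compositions $\big\langle\phi(\bws{u},\bws{v}),\phi(\bws{v},\bws{w})\big\rangle_{\bws{u}\bws{v}\bws{w}}$ for $u\ordb v\ordb w\in\alsbwo{Z}{\ordqb}$. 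The including compositions of the second kind are trivial modulo $(S_\phi,\cdot)$ by a direct computation: since $B(a,b)$ contains no subword $\bws{\,\cdot\,}\bws{\,\cdot\,}$ (condition~\meqref{item:brrb}) and $\bws{[B(x,y)]}$ is multilinear (condition~\meqref{item:arrb}), rewriting the inner redex $\bws{a}\bws{b}\to\bws{B(a,b)}$ commutes with the outer rewriting, exactly as for the first two compositions treated in the proof of Theorem~\mref{thm:gsbdt}.

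Thus everything reduces to the intersection composition. Evaluating the special normal words via Corollary~\mref{lem:jeq} (with $[\bws{u}\bws{v}]=(\bws{[u]}\bws{[v]})$ and $[\bws{u}\bws{v}\bws{w}]=(\bws{[u]}(\bws{[v]}\bws{[w]}))$), one obtains
\[
\big\langle\phi(\bws{u},\bws{v}),\phi(\bws{v},\bws{w})\big\rangle_{\bws{u}\bws{v}\bws{w}}
=\big((\bws{[u]}\bws{[v]})\bws{[w]}\big)-\big(\bws{[u]}(\bws{[v]}\bws{[w]})\big)-\big(\bws{[B([u],[v])]}\bws{[w]}\big)+\big(\bws{[u]}\bws{[B([v],[w])]}\big).
\]
Applying the Jacobi identity $\big((ab)c\big)-\big(a(bc)\big)=\big((ac)b\big)$ to the first two terms turns this into
\[
\big((\bws{[u]}\bws{[w]})\bws{[v]}\big)-\big(\bws{[B([u],[v])]}\bws{[w]}\big)+\big(\bws{[u]}\bws{[B([v],[w])]}\big),
\]
and $\big((\bws{[u]}\bws{[w]})\bws{[v]}\big)=[\bws{u}\bws{w}\bws{v}]$ is $S_\phi$-reducible via the redex $\bws{u}\bws{w}$ (here $u\ordb w$). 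Running $\Pil{S_\phi}$ on all three terms, and using the bilinearity of $B$ together with Lemma~\mref{lem:nassb}, the composition reduces to $\bws{\,E\,}$, where
\[
E=[B([B([u],[w])],[v])]-[B([B([u],[v])],[w])]+[B([u],[B([v],[w])])]
\]
(with the inner $B$-expressions themselves rewritten; identifying them with the summands of $E$ is part of the bookkeeping below). Since every redex of $\bws{E}$ sits strictly inside the outermost bracket, $\bws{E}\astarrow_{\Pil{S_\phi}}0$ if and only if $E\astarrow_{\Pil{S_\phi}}0$. Hence $S_\phi$ is a \gsb in $\opliez$ if and only if $E\astarrow_{\Pil{S_\phi}}0$ for all $u\ordb v\ordb w$, i.e.\ if and only if condition~\meqref{item:crrb} holds, which is exactly Item~\meqref{item:rdrga}. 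This gives Item~\meqref{item:rdrga}$\Leftrightarrow$Item~\meqref{item:rdrgc} and completes the argument.

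The main obstacle is the bookkeeping in the intersection-composition step: computing $[\phi(\bws{u},\bws{v})\bws{w}]_{\bws{u}\bws{v}}$ and $[\bws{u}\,\phi(\bws{v},\bws{w})]_{\bws{v}\bws{w}}$ through the Shirshov bracketing of $\bws{u}\bws{v}\bws{w}$ via Lemma~\mref{lem:lalsbw}, correctly tracking the signs that arise on passing to the Lie bracket, invoking the Jacobi identity at the right spot, and then checking that the subsequent $\Pil{S_\phi}$-reductions of the three $B$-terms close up precisely onto the summands of condition~\meqref{item:crrb}. A secondary, routine-but-necessary point is the verification that the ``nested'' including compositions are trivial. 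One should also note that for this theorem the invariant monomial order must be chosen so that $\lbar{\phi(u,v)}=\bws{u}\bws{v}$ --- for instance $\ordqc$ of Example~\mref{ex:inva1} works --- in contrast to the differential case, cf.\ Remark~\mref{re:last1}.
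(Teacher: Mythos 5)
Your proposal is correct and follows essentially the same route as the paper: the equivalences \meqref{item:rdrgb}$\Leftrightarrow$\meqref{item:rdrgc}$\Leftrightarrow$\meqref{item:rdrgd}$\Leftrightarrow$\meqref{item:rdrge} via Theorem~\mref{thm:alrcpeq}, \meqref{item:rdrge}$\Leftrightarrow$\meqref{item:rdrgf} via~\mcite{ZGGS}, \meqref{item:rdrgc}$\Leftrightarrow$\meqref{item:rdrgcc} via Lemma~\mref{lem:cdmol}, and \meqref{item:rdrga}$\Leftrightarrow$\meqref{item:rdrgc} by reducing everything to the one composition at $\bws{u}\bws{v}\bws{w}$, which after the Jacobi manipulation matches condition~\meqref{item:crrb} exactly as in the paper's displayed computation. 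Your extra remarks (termination is automatic by Lemma~\mref{lem:termi}, the nested including compositions are trivial, and the overlap at $\bws{u}\bws{v}\bws{w}$ is an intersection composition of $\phi([u],[v])$ and $\phi([v],[w])$) only make explicit what the paper leaves implicit, apart from the harmless notational slip of writing $\phi(\bws{u},\bws{v})$ for $\phi([u],[v])$.
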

\begin{proof}
Theorem~\mref{thm:alrcpeq} gives the equivalences  $$ \text{Item~\meqref{item:rdrgb}} \Leftrightarrow \text{Item~\meqref{item:rdrgc}} \Leftrightarrow \text{Item~\meqref{item:rdrgd}} \Leftrightarrow \text{Item~\meqref{item:rdrge}}.$$
Also \cite[Corollary 3.13]{ZGGS} yields
Item~\meqref{item:rdrge} $\Leftrightarrow$ Item~\meqref{item:rdrgf}. Further, Item~\meqref{item:rdrgc}$\Longleftrightarrow$ Item~\meqref{item:rdrgcc} follows from Lemma~\mref{lem:cdmol} and our assumption that $\lbar{\phi(u , v)}= \bws{u}\bws{ v}$ for $u\ordb v\in \alsbwo{Z}{\ordqb}$.

We finally prove Item~\meqref{item:drga}$\Longleftrightarrow$ Item~\meqref{item:drgc}.
As the proof is similar to the proof of Theorem~\mref{thm:gsbdt}, we only expose one type of including composition:
\begin{eqnarray*}
&&\langle \phi\big(uv , w\big) ,\phi\big(u ,vw \big) \rangle_{\bws{u}\bws{v}\bws{w}}\\
&=& [\phi\big(uv , w\big)]_{\bws{u}\bws{v}} - [\phi\big(u ,vw \big)]_{\bws{v}\bws{w}}\\
&=&[[\bws{[u]}\bws{[w]}]\bws{[v]}]-[B([B([u],[v])],[w])]+ [B([u],[B([v],[w])])]\\
&=&[B([B([u],[w])],[v])]-[B([B([u],[v])],[w])]+ [B([u],[B([v],[w])])]+[\star \bws{v}\suba{\bws{u}\bws{w}}]_{\lbar{\phi(u,w)}}
\end{eqnarray*}
for  $u\ordb v\ordb w \in \alsbwo{Z}{\ordqb}$ and $\bws{u}\bws{v}\bws{w}\ordb \bws{u}\bws{w}\bws{v}$.
\end{proof}

\begin{remark}
Note that in the above Theorem~\mref{thm:gsbrbt}~\meqref{item:rdrgcc}, the space $\bfk\irrl(S)$ can be view as the free $\phi$-Lie algebra with the operator $P:=\bws{~}$ and with the multiplication $\{-,-\}$ on $\bfk\irrl(S)$ defined on $\irrl(S)$ as follows and extended by bilinearity:

For any $[u],[v]\in \irrl(S)\subseteq\nlsbwd{Z}$,
\begin{enumerate}
  \item if either $u \in \nlsw{X}$ or $v \in  \nlsw{Z}$, then define $\{ [u][v]\} := [[u][v]]$ ;
  \item if $u = \bws{u'}$ and $v = \bws{v'}$ are both in $\bws{\irrl(S)}$, then define $\{ [u][v]\} := [\bws{B([u'],[v'])}]$ ;
  \item if $[u]=[[u_1]\cdots [u_m]], [v]=[[v_1]\cdots [v_n]]$ with $m>1$ or $n>1$, the  recursively define
  $$\{ [u][v]\} := [[u_1]\cdots [u_{m-1}]\{[u_m][v_1]\}\cdots [v_n]].$$
\end{enumerate}
\end{remark}

\begin{corollary}
Let $[{B(x , y)}]\in \opliex \subseteq \bfk\plie{\{x,y\}}$ be from the following list.
For any $\lambda\in\bfk$, the expressions $[{B(x , y)}]\in \opliex$ in the list below are of Rota-Baxter type, for $x> y$
\begin{enumerate}
	\item  $[x\bws{y}]$ (average operator),
	\item  $[\bws{ x } y]$ (inverse average operator),
	\item  $[\bws{ x } y]+[\bws{ y } x]$,
	\item  $[x\bws{y}]+[y\bws{ x }] $,
	\item  $[x\bws{y}]+[\bws{ x } y] -\bws{ [xy] }$ (Nijenhuis operator),
	\item  $[\bws{ x } y]+[x\bws{y}] +\lambda [xy]$ (Rota-Baxter operator of weight $\lambda$).
\end{enumerate}
Then any (and hence all) the statements in Theorem~\mref{thm:gsbrbt} hold.
\end{corollary}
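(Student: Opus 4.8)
The plan is to deduce the corollary straight from Theorem~\mref{thm:gsbrbt}, pushing all the real content back onto the associative classification in Theorem~\mref{thm:rbtyp}. The key observation is that each $[B(x,y)]$ in the list is obtained from an entry of the list in Theorem~\mref{thm:rbtyp} by placing a Lie bracket $[-]$ around the associative expression, and that erasing this outer bracket recovers precisely that associative $B(x,y)$; concretely, the six entries of the corollary correspond, in order, to entries $(1),(2),(4),(3),(5),(6)$ of Theorem~\mref{thm:rbtyp}. The entries of Theorem~\mref{thm:rbtyp} involving $\bws{\bfone}$, together with the generalized (anti)endomorphism entries, are absent here simply because $\bfone\notin\oplie$.

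First I would check that each OLPI $\phi(x,y)=[\bws{x}\bws{y}]-\bws{[B(x,y)]}\in\opliex$ from the list satisfies the standing hypotheses of Theorem~\mref{thm:gsbrbt}. Multilinearity in $x$ and $y$ (condition~\mref{item:arrb} of Definition~\mref{defn:lrbtyp}) is immediate. The normal $\phi$-form condition~\mref{item:brrb}, that $[B(x,y)]$ contains no subword $[\bws{[u]}\bws{[v]}]$, holds by inspection, since none of the building blocks $x\bws{y}$, $\bws{x}y$, $xy$, $\bws{xy}$ contains an occurrence of $\bws{\ }\bws{\ }$. For the monomial order I would take $\ordqc$ of Example~\mref{ex:inva1} and verify $\lbar{\phi([u],[v])}=\bws{u}\bws{v}$ for $u\ordc v$: by Lemma~\mref{lem:ldt} the two monomials of $[\bws{[u]}\bws{[v]}]$ have leading bracketed words $\bws{u}\bws{v}$ and $\bws{v}\bws{u}$, and invariance of $\ordqc$ selects $\bws{u}\bws{v}$; meanwhile every monomial of $\bws{[B([u],[v])]}$ has the form $\bws{w}$ with $\deg(\bws{w})\leq\deg(u)+\deg(v)+2=\deg(\bws{u}\bws{v})$, and in case of equality $\bws{w}$ has breadth $1<2$. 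Hence $\bws{u}\bws{v}$ is the leading bracketed word in every case (the summand $\lambda xy$ in the Rota-Baxter form and $\bws{xy}$ in the Nijenhuis form contribute, after applying the outer operator, bracketed words of strictly smaller degree, resp.\ of equal degree but smaller breadth).

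Granting the hypotheses, the conclusion is one line per entry. For each $[B(x,y)]$ in the list the associated associative word $B(x,y)$ appears in the list of Theorem~\mref{thm:rbtyp}, hence $\phi(x,y)=\bws{x}\bws{y}-\bws{B(x,y)}$ is a Rota-Baxter type OPI, i.e.\ statement~\mref{item:rdrgf} of Theorem~\mref{thm:gsbrbt} holds. By the equivalence of all seven statements of that theorem -- which itself rests on Theorem~\mref{thm:alrcpeq} -- statement~\mref{item:rdrga} also holds, so $\phi$ is a Rota-Baxter type OLPI, and statements~\mref{item:rdrgb}--\mref{item:rdrge} hold as well. This is exactly the assertion of the corollary.

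The only place I expect to need any care is the bookkeeping in the two reductions: confirming that deleting the outer Lie bracket from each $[B(x,y)]$ returns the exact associative expression listed in Theorem~\mref{thm:rbtyp} (so that no monomial is gained or lost under the passage between the Lie and associative settings), and verifying the leading-word identity $\lbar{\phi([u],[v])}=\bws{u}\bws{v}$ uniformly across all six forms -- notably for the weight-$\lambda$ Rota-Baxter form, where the summand $\lambda[xy]$, and for the Nijenhuis form, where the doubly-bracketed term $\bws{\bws{[xy]}}$, must be checked to yield a strictly smaller, resp.\ equal-but-narrower, leading bracketed word. Neither point is substantive: Theorems~\mref{thm:gsbrbt}, \mref{thm:rbtyp} and \mref{thm:alrcpeq} carry the entire weight of the argument, so there is no genuine obstacle here.
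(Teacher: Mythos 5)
Your argument is correct and follows essentially the same route as the paper: the paper's proof simply observes that the list is the $\bfone$-free part of the list in Theorem~\ref{thm:rbtyp} and invokes Theorems~\ref{thm:rbtyp} and~\ref{thm:gsbrbt} together with $\bfone\notin\oplie$. Your additional checks of the hypotheses of Theorem~\ref{thm:gsbrbt} (multilinearity, the normal $\phi$-form condition, and $\lbar{\phi([u],[v])}=\bws{u}\bws{v}$ under $\ordqc$) are correct and merely make explicit what the paper leaves implicit.
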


\begin{proof}
Note that the list in the corollary is obtained from the list in Theorem~\mref{thm:rbtyp} by only taking the ones without $\bfone$. 
Thus the corollary follows from Theorems~\mref{thm:rbtyp} and~\mref{thm:gsbrbt}, and the fact that $\bfone$ is not in $\oplie$.
\end{proof}

A natural question is to determine whether these are all the Rota-Baxter type OPLIs. 

\subsection{Modified Rota-Baxter OLPI}
To give applications beyond the differential type and Rota-Baxter type OLPIs, we consider the {\bf modified Rota-Baxter OLPI} of weight $\lambda$, defined to be
$$\phi(x,y)=[ \bws{x}\bws{ y}]-\bws{[x \bws{ y }]}-\bws{[\bws{ x } y]}-\lambda [xy] \in\opliex,$$
with $x> y$.
When $\lambda=-1$, the modified Rota-Baxter Lie algebra  is a Lie algebra $L$ equipped with a linear map $P : L \to L$ satisfying
$$[P(x)P(y)]=P([P(x)y])+P([xP(y)])-[xy]\,\text{ for }\,x,y\in L,$$
called the {\bf modified Yang-Baxter equation}~\mcite{Bo,Kup,STS}.

\begin{theorem}
 Let $$\phi(x,y)=[ \bws{x}\bws{ y}]-\bws{[x \bws{ y }]}-\bws{[\bws{ x } y]}-\lambda [xy] \in\opliex$$
be an OLPI with $x> y$.
Let $Z$ be a well-ordered set. Let $\ordqb$ be an invariant monomial order $\ordqb$ on $\plie{Z}$  satisfying $\lbar{\phi(u , v)}= \bws{u}\bws{ v}$ for $u\ordb v\in \alsbwo{Z}{\ordqb}$ $($such as $\ordqc$~\cite{QC} in Example.~\mref{ex:inva1}$)$.
The following equivalent statements hold.
\begin{enumerate}
  \item $S_\phi=\left \{\left .[\bws{[u]}\bws{ [v]}]-\bws{[[u] \bws{ [v] }]}-\bws{[\bws{ [u] } [v]]}-\lambda [[u][v]] \,\right|\,\,u, v\in \alsbwo{Z}{\ordqb}, u\ordb v\right \}$ is a \gsb in $\opliez$ with respect to the monomial order $\ordqb$.
  \item The term-rewriting system $$\Pil{S_\phi}=\big\{[q\suba{\bws{u}\bws{  v}}]\rightarrow R([q\suba{\phi(u,v)}]_{{\bws{u}\bws{v}}})\,|\,q\in\sopm{Z}, u, v\in \alsbwo{Z}{\ordqb}, u\ordb v \big \}$$
  is convergent.
  \item $S_\phi=\left\{\left .\bws{u}\bws{ v}-\bws{u \bws{ v }}-\bws{\bws{ u } v}-\lambda uv\,\right|\,u,v\in\plie{Z}\right\}$ is a \gsb in $\bfk\plie{Z}$ with respect to the monomial order $\ordqb$.
  \mlabel{it:modrb3}
  \item
  The term-rewriting system $$\Pia{S_\phi}=\big\{q\suba{\bws{u}\bws{  v}}\rightarrow q\suba{\bws{\bws{u}  v}+\bws{{u}\bws{  v}}+\lambda uv}\,|\,q\in\sopm{Z}, u, v\in \plie{Z}\big\}$$
  is convergent.
\end{enumerate}
\mlabel{thm:modrbDL}
\end{theorem}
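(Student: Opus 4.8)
The plan is to reduce all four items to a single, concretely checkable assertion about the free operated associative algebra and then verify it. First I would observe that, exactly as in the proofs of Theorems~\mref{thm:gsbdt} and~\mref{thm:gsbrbt}, Theorem~\mref{thm:alrcpeq} applies: $S_\phi$ is monic, contains no $\bfone$, and $\ordqb$ is an invariant monomial order with $\lbar{\phi(u,v)}=\bws{u}\bws{v}$ for $u\ordb v$ (for instance $\ordqc$ of Example~\mref{ex:inva1}); so the four items are pairwise equivalent and it suffices to establish any one of them. I would take item~\meqref{it:modrb3}, namely that
$$S_\phi=\big\{\bws{u}\bws{v}-\bws{u\bws{v}}-\bws{\bws{u}v}-\lambda uv\,\big|\,u,v\in\plie{Z}\big\}$$
is a \gsb in $\bfk\plie{Z}$ with respect to $\ordqb$ (equivalently, that $\Pia{S_\phi}$ is convergent). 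Alternatively this associative statement can be quoted from the treatment of modified Rota-Baxter operators on associative algebras in~\mcite{GG}.

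To prove it directly I would use the Composition-Diamond lemma for operated algebras~\mcite{BCQ}. Termination of $\Pia{S_\phi}$ is immediate: since $\lbar{\phi(u,v)}=\bws{u}\bws{v}$, each of $\bws{u\bws{v}}$, $\bws{\bws{u}v}$, $uv$ is strictly $\ordb$-below $\bws{u}\bws{v}$, so every one-step rewriting $q\suba{\bws{u}\bws{v}}\to q\suba{\bws{u\bws{v}}+\bws{\bws{u}v}+\lambda uv}$ replaces a monomial by strictly $\ordb$-smaller ones, and $\ordb$ is a well-order. It then remains to show that every composition of members of $S_\phi$ is trivial. Since $\lbar{\phi(u,v)}=\bws{u}\bws{v}$ has breadth $2$, an intersection composition forces an overlap word of breadth $3$, necessarily $\bws{u}\bws{v}\bws{w}$ with $u,v,w\in\plie{Z}$; and an including composition $w=\lbar{f}=q\suba{\lbar{g}}$ with $q$ nontrivial can occur only when the two-prime monomial $\lbar{g}$ lies strictly inside one of the two brackets of $\lbar{f}$ (distinct members of $S_\phi$ have distinct leading monomials).

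For the intersection composition one computes
\begin{align*}
\langle\phi(u,v),\phi(v,w)\rangle_{\bws{u}\bws{v}\bws{w}}
&=\phi(u,v)\bws{w}-\bws{u}\,\phi(v,w)\\
&=-\bws{u\bws{v}}\bws{w}-\bws{\bws{u}v}\bws{w}-\lambda uv\bws{w}+\bws{u}\bws{v\bws{w}}+\bws{u}\bws{\bws{v}w}+\lambda\bws{u}vw,
\end{align*}
and then reduces each monomial by $\Pia{S_\phi}$. The finite bookkeeping runs as follows: after one round the three $\lambda$-terms cancel among themselves, and after reducing the nested redexes $\bws{v}\bws{w}$ and $\bws{u}\bws{v}$ that appear inside the remaining brackets every term cancels in pairs, so the composition rewrites to $0$; since the leading monomials already cancelled, every monomial met along the way is $\ordb\bws{u}\bws{v}\bws{w}$, so the composition is trivial modulo $(S_\phi,\bws{u}\bws{v}\bws{w})$. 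The including compositions are easier: the top rewrite at $\lbar{f}=\bws{u}\bws{v}$ and the rewrite inside a bracket act on disjoint parts of the word, hence commute, and the composition again reduces to $0$. Conceptually the whole calculation merely records that $(P(u)P(v))P(w)$ and $P(u)(P(v)P(w))$ converge to one $\Pia{S_\phi}$-normal form, which is where the relation — and in particular the term $\lambda uv$ sitting outside the operator — is used. The one step that needs care is this cancellation bookkeeping for the intersection composition; the rest is formal. Combining the established associative statement with Theorem~\mref{thm:alrcpeq} then yields all four items.
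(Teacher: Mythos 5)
Your proposal is correct and follows essentially the same route as the paper: the four items are identified via Theorem~\mref{thm:alrcpeq} (with the nonunital observation of Remark~\mref{rem:none}), and then the associative statement~\meqref{it:modrb3} is settled by invoking the known result for modified Rota--Baxter operators on associative algebras in~\mcite{GG} (the paper cites \cite[Theorem~4.6]{GG}). Your optional direct Composition--Diamond verification of the associative case is a reasonable supplement but is not needed once that citation is in place.
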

\begin{proof}
The equivalence of the four statements follows from Theorem~\mref{thm:alrcpeq}. By~\cite[Theorem~4.6]{GG}, statement~\meqref{it:modrb3} and hence the other statements hold. 
\end{proof}

\smallskip
\noindent
{\bf Acknowledgments.}
This work is supported by the National Natural Science Foundation of
China (Grant Nos. 12071191, 11861051), the Natural Science Foundation of Gansu
Province (Grant No. 20JR5RA249) and the Natural Science Foundation of Shandong Province
(ZR2020MA002).

\end{document}